\let\mathcal\mathscr
\numberwithin{equation}{subsection}
\def\R{{\bf R}}
\def\PA{{\widehat A}}
\def\PAZ{\widehat{A_Z}}
\def\PB{{\widehat B}}
\def\PPB{{\widehat{\widetilde{B}}}}
\def\PK{{\widehat K}}
\def\PY{{\widehat Y}}
\def\PT{{\widehat T}}
\def\phi{{\varphi}}
\newcommand{\abs}[1]{\left\vert#1\right\vert}
\newcommand{\set}[1]{\left\{#1\right\}}
\newcommand{\Db}{{\rm D}^{\rm b}}
\DeclareMathOperator{\rk}{rk}
\def\cI{\mathcal{I}}
\def\cF{\mathcal{F}}
\def\cO{\mathcal{O}}
\def\cG{\mathcal{G}}
\def\cP{\mathcal{P}}
\def\cS{\mathcal{S}}
\def\cQ{\mathcal{Q}}
\def\cW{\mathcal{W}}
\def\ie{\hbox{i.e.,}}
\DeclareMathOperator{\codim}{codim}
\DeclareMathOperator{\Pic}{Pic}
\newtheorem{theAlph}{Theorem}[section]
\newcommand{\res}[2]{\left.#1\right|_{#2}} 
\newcommand{\Co}{\hyperref[hypo1]{{\rm (C.1--2)}}} 
\newcommand{\Con}{\hyperref[hypo1]{{\rm (C.1--3)}}} 
\newcommand{\Cond}{\hyperref[hypo1]{{\rm (C.1--4)}}} 
\newcommand{\Vz}{V_{\oldstylenums{0}}}
\newtheorem{theo}{Theorem}[section]
\newtheorem{lemm}[theo]{Lemma}
\newtheorem{coro}[theo]{Corollary}
\newtheorem{prop}[theo]{Proposition}
\newtheorem*{conj*}{Conjecture}
\newtheorem{sett}[theo]{Setting}
\theoremstyle{definition}
\newtheorem{rema}[theo]{Remark}
\newtheorem{exam}[theo]{Example}
\newtheorem{qu}[theo]{Question}
\newtheorem*{qu*}{Question}
\begin{document}
\title[Characterization of products of theta divisors]{Characterization of products of theta divisors}
\author{Zhi Jiang}
\author{Mart\'{\i} Lahoz}
\author{Sofia Tirabassi}

\address{Z.J.: D\'epartement de Math\'ematiques d'Orsay, Universit\'{e} Paris-Sud 11, B\^{a}timent 425, F-91405 Orsay, France}
\email{Zhi.Jiang@math.u-psud.fr}
\urladdr{\url{http://www.math.u-psud.fr/~jiang/}}

\address{M.L.: D\'epartement de Math\'ematiques d'Orsay, Universit\'{e} Paris-Sud 11, B\^{a}timent 425, F-91405 Orsay, France}
\curraddr{Institut de Math\'{e}matiques de Jussieu, Universit\'{e} Paris 7 Denis Diderot, B\^{a}timent Sophie-Germain, Case 7032, F-75205 Paris, France}
\email{lahoz@math.jussieu.fr}
\urladdr{\url{http://www.math.u-psud.fr/~lahoz/}}

\address{S.T.: Math Department, Univesity of Utah, 155 S 1400 E, Salt Lake City, UT 84112, USA}
\email{sofia@math.utah.edu}
\urladdr{\url{http://www.math.utah.edu/~sofia/}}

\thanks{M.~L.~is supported by Fondation Math\'ematique Jacques Hadamard (FMJH) and partially by MTM2012-38122-C03-02.
S.~T.~initiated to work on this project when she was a Postdoc at the Faculty of Mathematics, Informatics and Mechanics of University of Warsaw.
}

\begin{abstract}
	We study products of irreducible theta divisors from two points of view.
	On the one hand, we characterize them as normal subvarieties of abelian varieties such that a desingularization has
	holomorphic Euler characteristic $1$.
	On the other hand, we identify them up to birational equivalence among all varieties of maximal Albanese dimension.
	We also describe the structure of varieties $X$ of maximal Albanese dimension, with holomorphic Euler characteristic $1$ and irregularity $2\dim X-1$.
\end{abstract}
\subjclass[2010]{14J10, 14F17, 14E05.}
\keywords{Theta divisors, Euler characteristic, generic vanishing.}
\maketitle

\section*{Introduction}

Given an abelian variety $A$ and an ample line bundle $L$ on $A$, we say that $(A,L)$ is a principally polarized abelian variety if $L$ has only one section up to constants.
The effective divisor associated to $L$ and its translates are called theta divisors.

Irreducible theta divisors of dimension $1$ are smooth genus $2$ curves.
In dimension $2$, they are birational to the symmetric product of a curve of genus $3$.
More precisely, if the genus $3$ curve $C$ is non-hyperelliptic they are isomorphic to $C^{(2)}$, while if the curve is hyperelliptic, they have an isolated singularity and $C^{(2)}$ is their natural desingularization.
In higher dimensions, they may also be singular, but Ein--Lazarsfeld proved in \cite{el} that they are always normal and they have only rational singularities.
Hence, given a theta divisor $\Theta\subset A$, any desingularization $X\to \Theta$ has
\begin{equation*}
	\chi(X,\omega_X)=\chi(\Theta,\omega_\Theta)=h^0(A,\cO_{A}(\Theta))=1.
\end{equation*}
Clearly, $A$ is the Albanese variety of $X$, and the induced morphism $X\to A$ is its Albanese morphism.
Thus, desingularizations of theta divisors are varieties of maximal Albanese dimension, \ie\ varieties $X$ such that the Albanese morphism $a_X:X\to A$ is generically finite onto its image.

\medskip

The characterization up to birational equivalence of irreducible theta divisors among varieties of maximal Albanese dimension started with the work of Hacon \cite{Hac-theta} and it has been further refined in \cite{HP-mAd,LP,BLNP,pareschi} (see the end of Section \ref{sec:mAd} for more precise references).

Naturally, if $X$ is a desingularization of a product of irreducible theta divisors, then we also have $\chi(X,\omega_X)=1$.
Indeed, Beauville (in the appendix to \cite{appBeau}) characterized products of two curves of genus $2$, as smooth projective surfaces of general type $S$ with irregularity $4$ and $\chi(S,\omega_S)=1$.
This characterization of products of curves of genus $2$ (irreducible theta divisors of dimension $1$) was extended to higher dimensional varieties of maximal Albanese dimension by Hacon--Pardini \cite{HP-g2}.

\medskip

We are interested here in giving a more general characterization of products of irreducible theta divisors.
First we identify them inside arbitrary abelian varieties as normal subvarieties such that a desingularization has holomorphic
Euler characteristic $1$.

\begin{theAlph}\label{thmA}
Let $Y$ be a subvariety of an abelian variety $A$.
Let $X$ be a desingularization of $Y$. Then, $Y$ is isomorphic to a product of theta divisors if, and only if, $Y$ is normal and $\chi(X,\omega_X)=1$.
\end{theAlph}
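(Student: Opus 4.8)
The plan is to prove the easy implication by hand and to reduce the converse to the birational characterization of products of theta divisors, upgrading birational equivalence to an isomorphism by exploiting normality. For the implication ``$\Rightarrow$'', write $Y=\Theta_1\times\cdots\times\Theta_k$ with $\Theta_i\subset A_i$ an irreducible theta divisor. Each $\Theta_i$ is normal by \cite{el}, and a product of normal varieties is normal, so $Y$ is normal. Taking $X=\prod_i X_i$ with $X_i\to\Theta_i$ desingularizations, K\"unneth gives $\omega_X\cong\boxtimes_i\omega_{X_i}$, whence $\chi(X,\omega_X)=\prod_i\chi(X_i,\omega_{X_i})=1$; since $\chi(\,\cdot\,,\omega)$ is a birational invariant of smooth projective varieties, $\chi(X,\omega_X)=1$ for every desingularization.

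For the converse, assume $Y$ normal and $\chi(X,\omega_X)=1$. After translating so that $0\in Y$, I would replace $A$ by the abelian subvariety generated by $Y$; as each theta divisor is ample, a product of theta divisors generates its ambient abelian variety, so this loses no generality for the conclusion. Now $X\to Y\hookrightarrow A$ is generically finite onto $Y$, so $X$ has maximal Albanese dimension and $\Alb(X)\to A$ is surjective. Since $\chi(X,\omega_X)=1\ne 0$, \cite{el} shows $Y$ is not fibred by subtori; in particular $Y\subsetneq A$, since otherwise $X$ would be birational to $A$ and $\chi(X,\omega_X)=\chi(A,\omega_A)=0$. I would then invoke the generic vanishing characterization of products of irreducible theta divisors among varieties of maximal Albanese dimension with $\chi=1$ (Hacon \cite{Hac-theta} and its refinements \cite{HP-mAd,LP,BLNP,pareschi}): there is a decomposition $\Alb(X)\cong A_1\times\cdots\times A_k$ into principally polarized factors with $X$ birational to $\prod_i\Theta_i$, the Albanese map becoming the inclusion $\prod_i\Theta_i\hookrightarrow\prod_i A_i$. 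Writing $\psi:\Alb(X)\to A$ for the induced surjection, the original map is $X\to\Alb(X)\xrightarrow{\psi}A$ and $Y=\psi\bigl(\prod_i\Theta_i\bigr)$.

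The final step is to show that $h:=\psi|_{\prod_i\Theta_i}:\prod_i\Theta_i\to Y$ is an isomorphism, which yields $Y\cong\prod_i\Theta_i$. As $h$ is birational (both $\prod_i\Theta_i$ and $Y$ are birational to $X$) and $Y$ is normal, Zariski's Main Theorem reduces this to proving that $h$ is \emph{finite}, equivalently that $\psi$ is an isogeny, i.e.\ $\dim A=\sum_i\dim A_i$. This is the main obstacle. In the irreducible case $k=1$ it follows from a dimension count: the characterization gives $\dim\Alb(X)=\dim Y+1$, while $Y\subsetneq A$ yields $\dim Y+1\le\dim A\le\dim\Alb(X)=\dim Y+1$, so $\psi$ is an isogeny and $h$, being the restriction of a finite morphism, is finite. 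For $k\ge 2$ I would argue by induction on $k$: the product structure on $\Alb(X)$ gives a fibration of $X$ over $\Theta_1$ with fibre $\prod_{i\ge 2}\Theta_i$, which I would promote---using normality of $Y$ and again Zariski's Main Theorem---to an honest splitting $A=A_1\times A'$, $Y=Y_1\times Y'$ with $Y_1,Y'$ normal and $\chi=1$, and then conclude by induction. The delicate point, and where normality is used decisively, is exactly this promotion of a birational product decomposition to a genuine product of subvarieties of $A$: birational equivalence alone cannot separate $Y$ from $\prod_i\Theta_i$, so one must rule out positive-dimensional fibres of $\psi$ before Zariski's Main Theorem applies.
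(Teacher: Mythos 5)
Your reduction to the birational statement and your handling of the isogeny case are essentially sound, but two points need attention. First, the ``generic vanishing characterization of products of irreducible theta divisors among varieties of maximal Albanese dimension with $\chi=1$'' that you invoke does not exist unconditionally in the literature: whether conditions (C.1)--(C.3) alone suffice is precisely the open Question \ref{qu?}. What is actually available is Theorem \ref{thm:sm-in-codim1-sav}, whose hypothesis (that $Y$ be smooth in codimension $1$) is supplied by normality; this is how the paper proceeds, so this is a misattribution rather than a fatal error, but it matters that normality is already being consumed at this stage and not only in the final upgrade.

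Second, and this is a genuine gap: for $k\ge 2$ the step that rules out $\dim\ker\psi>0$ is missing. Your proposed induction --- promoting the birational decomposition to ``an honest splitting $A=A_1\times A'$, $Y=Y_1\times Y'$'' --- begs the question, because when the neutral component $K$ of $\ker\psi$ projects nontrivially onto \emph{every} factor $K_j=A_{\Theta_j}$ (the configuration to which the paper reduces by its own induction on the number of factors), no factor of $A_X$ splits off inside $A$ and no such decomposition of $A$ or of $Y$ exists; the entire difficulty is to derive a contradiction in exactly this configuration. Note also that Zariski's Main Theorem does not ``reduce the problem to proving $h$ is finite'': a priori $h$ could fail to be quasi-finite, and your asserted equivalence between finiteness of $h$ and $\psi$ being an isogeny is only proved in one direction. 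The paper instead analyses the fibres $\tau_1^{-1}\tau_1(c)\cong\bigcap_j\varphi_j^{-1}(\Theta_j-c_j)\subset K$: it chooses $d_1,\dots,d_t$ so that $C=\bigcap_{i\le t}\varphi_i^{-1}(\Theta_i-d_i)$ is a connected curve through $0_K$, uses the fact that $c\mapsto \cO_{K_{t+1}}(\Theta_{t+1}-c)|_{\Theta_{t+1}}$ separates points of $K_{t+1}$ to find a translate $\Theta_{t+1}-d_{t+1}$ meeting $\varphi_{t+1}(C)$ in at least two points, and then completes $d$ to a point whose $\tau$-fibre is finite with at least two points, contradicting Zariski's Main Theorem for the birational morphism $\tau$ onto the normal variety $Y$. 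Some argument of this kind, excluding a positive-dimensional kernel, is needed to close your proof.
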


On the one hand, studying subvarieties of abelian varieties is more general than studying just the images of smooth projective varieties under the Albanese morphism.
For example, a singular curve inside an abelian variety is never the image under an Albanese morphism.

On the other hand, given a smooth projective variety, even if it is of maximal Albanese dimension, the Albanese morphism does not need to induce a birational map onto its image.

Indeed, if we consider a subvariety $Y$ of an abelian variety $A$ such that $Y$ is not of general type, then $Y$ is fibred by tori (see \cite[Thm.~10.9]{ueno}).
Hence, in this case, a desingularization $X$ of $Y$ will have $\chi(X,\omega_X)=0$.
Nevertheless, there exist smooth projective varieties $X$ of maximal Albanese dimension such that $\chi(X,\omega_X)=0$ and $X$ is of general type.
Clearly, in this case the Albanese morphism does not induce a birational map.
The first example was constructed by Ein--Lazarsfeld \cite[Ex.~1.13]{el} and Chen--Debarre--Jiang have studied in detail these varieties in \cite{CDJ}.
In particular, it is shown in \cite{CDJ} that the Ein--Lazarsfeld example is essentially the only one in dimension $3$.

In any case, if $X$ is a smooth projective variety of maximal Albanese dimension with $\chi(X,\omega_X)=1$ such that the Albanese image of $X$ is of general type, then the Albanese morphism induces a birational equivalence (see Lemma \ref{lem:sof}).
This allows us to characterize products of irreducible theta divisors up to birational equivalence by using a slightly less restrictive version of Theorem \ref{thmA} (see Theorem \ref{thm:sm-in-codim1}).

\begin{theAlph}\label{thmWC}
Let $X$ be a smooth projective variety.
Then $X$ is birational to a product of irreducible theta divisors if and only if $X$ satisfies the following conditions:
\begin{align}
&X \text{ is of maximal Albanese dimension;}\tag{C.1}\\
&\chi(X, \omega_X)=1;\tag{C.2}\\
&\text{the Albanese image of } X \text{ is of general type;}\tag{C.3}\\
\tag{C.4} &\text{the Albanese image of } X \text{ is smooth in codimension }1.
\end{align}
\end{theAlph}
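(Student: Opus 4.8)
The plan is to prove Theorem~\ref{thmWC} by reducing it to Theorem~\ref{thmA}, which characterizes products of theta divisors as \emph{normal} subvarieties of abelian varieties with $\chi=1$ for a desingularization. The only-if direction is immediate: a product of irreducible theta divisors sits inside a product of principally polarized abelian varieties, it is of maximal Albanese dimension by the discussion in the introduction, each irreducible theta divisor is of general type (its desingularization has Kodaira dimension equal to its dimension, since $\chi(\omega_X)=1$ rules out fibration by tori), so the product is of general type, and by Ein--Lazarsfeld~\cite{el} the theta divisors are normal, hence smooth in codimension $1$; the K\"unneth formula gives $\chi=1$. So all of \Cond\ hold and are birational invariants of the smooth model.

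For the if direction, assume $X$ satisfies \Cond. The strategy is to pass to the Albanese image $Y=a_X(X)\subseteq A$ and apply Theorem~\ref{thmA} to $Y$. First I would invoke Lemma~\ref{lem:sof} (advertised in the excerpt): since $X$ is of maximal Albanese dimension, $\chi(X,\omega_X)=1$, and the Albanese image $Y$ is of general type, the Albanese morphism $a_X\colon X\to Y$ is birational. Consequently $X$ is a desingularization of $Y$, and $\chi$ being a birational invariant of smooth projective varieties of maximal Albanese dimension gives $\chi(\widetilde Y,\omega_{\widetilde Y})=\chi(X,\omega_X)=1$ for any desingularization $\widetilde Y\to Y$. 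Thus $Y$ is a subvariety of the abelian variety $A$ whose desingularization has holomorphic Euler characteristic $1$.

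It then remains to upgrade the smoothness-in-codimension-$1$ hypothesis \textrm{(C.4)} to the normality required by Theorem~\ref{thmA}. Here I would use Serre's criterion: a variety is normal if and only if it is $R_1$ and $S_2$. Condition \textrm{(C.4)} supplies $R_1$ directly. For $S_2$, I would argue that $Y$ has rational singularities, or at least is Cohen--Macaulay in the relevant range: since $X\to Y$ is a desingularization with $\chi(X,\omega_X)=1$ and $Y$ is of general type sitting in an abelian variety, one controls the higher direct images $R^ia_{X*}\omega_X$ via generic vanishing and the Grauert--Riemenschneider theorem, forcing $Y$ to satisfy $S_2$. Combining $R_1$ from \textrm{(C.4)} with $S_2$ yields that $Y$ is normal, and Theorem~\ref{thmA} then identifies $Y$ with a product of theta divisors, whence $X$ is birational to such a product.

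I expect the main obstacle to be precisely this last step, deducing $S_2$ (or normality) of the Albanese image from the hypotheses. The subtlety is that \textrm{(C.4)} only gives regularity in codimension $1$, and one genuinely needs a global argument—presumably generic vanishing theory applied to the pushforwards of $\omega_X$, exploiting $\chi=1$—to rule out deeper failures of $S_2$ on the boundary where $a_X$ is not an isomorphism. The interplay between the birational invariance of $\chi$ and the depth condition on the possibly-singular locus of $Y$ is where the real work lies; once normality is secured, the conclusion is a clean application of Theorem~\ref{thmA}.
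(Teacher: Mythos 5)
Your only-if direction and your use of Lemma~\ref{lem:sof} to make $a_X\colon X\to Y:=a_X(X)$ birational are both fine and agree with the paper. The genuine gap is the bridge you propose from condition {\rm(C.4)} to the normality required by Theorem~\ref{thmA}: you supply $R_1$ from {\rm(C.4)} and then assert that generic vanishing and Grauert--Riemenschneider will ``force $Y$ to satisfy $S_2$.'' No such argument is given, and none is available at this stage of the paper. In codimension $\geq 2$ there is no reason for a subvariety of an abelian variety that is smooth in codimension $1$ to be $S_2$; the paper itself exhibits, right after the proof of Theorem~\ref{thmA}, subvarieties $Y'\subset A'$ that are smooth in codimension $1$, have $\chi=1$ desingularizations, and are \emph{not} normal. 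The only way the paper ever establishes normality of the Albanese image is a posteriori, as a consequence of the full classification (the image turns out to be an actual product of theta divisors); it is never deduced directly from \Cond. So as written your proof of the if-direction is incomplete at exactly the step you flag as ``where the real work lies.''

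The fix is that the detour through Theorem~\ref{thmA} is unnecessary: you are asking for a stronger hypothesis (normality) to obtain a stronger conclusion (isomorphism with a product of theta divisors) than the statement requires. The paper instead deduces Theorem~\ref{thmWC} from Theorem~\ref{thm:sm-in-codim1-sav}, whose hypothesis is precisely smoothness in codimension $1$ and whose conclusion is precisely birational equivalence with a product of theta divisors. Concretely: by Lemma~\ref{lem:sof}, conditions {\rm(C.1)}--{\rm(C.3)} make $X$ a desingularization of $Y=a_X(X)$; condition {\rm(C.4)} says $Y$ is smooth in codimension $1$; and $\chi(X,\omega_X)=1$ then lets you apply Theorem~\ref{thm:sm-in-codim1-sav} directly to conclude that $Y$, hence $X$, is birational to a product of theta divisors. (In the paper's logical architecture it is Theorem~\ref{thmA} that is deduced \emph{from} Theorem~\ref{thm:sm-in-codim1-sav}, by adding a Zariski's Main Theorem argument that genuinely uses normality to upgrade ``birational'' to ``isomorphic''; your proposal inverts this dependency and thereby inherits an unprovable intermediate claim.)
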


It is an open question to determine whether condition (C.4) is necessary or not (see Question \ref{qu?}).

Let $X$ be a smooth projective variety $X$ of maximal Albanese dimension and $\chi(X, \omega_X)=1$.
Hacon--Pardini show in \cite{HP-g2} that the irregularity $q(X)=h^0(X,\Omega_X^1)$ sits in the following range:
\begin{equation*}
	\dim X\leq q(X)\leq 2\dim X,
\end{equation*}
and, when $q(X)$ is maximal, \ie\ $q(X)=2\dim X$, $X$ is birational to a product of curves of genus $2$ (theta divisors of dimension $1$).
Hence condition \eqref{hypo4} is not necessary when $q(X)=2\dim X$.
We give a complete classification of the submaximal irregularity case, \ie\ $q(X)=2\dim X-1$ (see Theorem \ref{thm:2n-1main}), which implies that condition \eqref{hypo4} is also not necessary when $q(X)=2\dim X-1$.

\begin{theAlph}\label{thmQ=2D-1}
	Let $X$ be a smooth projective variety of maximal Albanese dimension.
	Assume that $\chi(X,\omega_X)=1$ and $q(X)=2\dim X-1$.
	Then $X$ is birational to one of the following
	varieties:
	\begin{enumerate}
		 \item a product of smooth curves of genus $2$ with the $2$-dimensional symmetric product of a curve of genus $3$;
		 \item $(C_1\times \widetilde{Z})/\langle\tau\rangle$, where
			$C_1$ is a bielliptic curve of genus $2$,
			$\widetilde{Z}\to C_1\times\cdots \times C_{n-1}$ is an \'etale double cover of a product of smooth projective curves of genus $2$, and
			$\tau$ is an involution acting diagonally on $C_1$ and $\widetilde{Z}$ via the involutions corresponding respectively to the double covers.
	 \end{enumerate}
\end{theAlph}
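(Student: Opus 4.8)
The plan is to induct on $n=\dim X$. The base case $n=2$ is the classification of surfaces with $\chi=1$ and $q=3$ (equivalently $p_g=q=3$) due to Hacon--Pardini and Pirola: such a surface is birational either to the symmetric square $C^{(2)}$ of a smooth curve of genus $3$, or to the Pirola surface $(C_1\times\widetilde{Z})/\langle\tau\rangle$, which are exactly cases (i) and (ii) for $n=2$. For the inductive step write $A=\Alb(X)$, so that $\dim A=2n-1$, and let $Z=a_X(X)\subset A$ be the Albanese image, of dimension $n$. I would then distinguish two cases according to whether or not $Z$ is of general type; these will produce cases (i) and (ii) respectively.

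Suppose first that $Z$ is of general type. By Lemma \ref{lem:sof} the morphism $a_X$ is birational onto $Z$, so $Z$ is a general type subvariety of $A$, of codimension $n-1$, a desingularization of which has $\chi=1$. Here the engine is the generic vanishing decomposition of $(a_X)_*\omega_X$: since $\chi(X,\omega_X)=1$ and every summand pulled back from a proper quotient of $A$ contributes $0$ to the Euler characteristic, there is exactly one $M$-regular summand, supported on all of $A$ and of Euler characteristic $1$. This rigidity should force $A$ to split as a product of principally polarized abelian varieties with $Z$ the corresponding product of irreducible theta divisors, reproducing the conclusion of Theorems \ref{thmA} and \ref{thmWC} in this range \emph{without} assuming smoothness in codimension $1$ (this is what makes (C.4) dispensable here). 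A count of irregularities then finishes the case: if $Z=\prod_j\Theta_j$ with $\Theta_j\subset A_j$ a $d_j$-dimensional theta divisor, then $\sum_j d_j=n$ while $\sum_j(d_j+1)=q(X)=2n-1$, so there are exactly $n-1$ factors; hence $n-2$ of them are one-dimensional (genus $2$ curves) and a single one is two-dimensional (a $C^{(2)}$ with $C$ of genus $3$), which is case (i).

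Now suppose $Z$ is not of general type. By Ueno's theorem \cite{ueno} it is fibred by translates of a nonzero abelian subvariety; let $B\subset A$ be the maximal such subtorus, set $\bar A=A/B$ of dimension $(2n-1)-\dim B$, and let $\bar Z\subset\bar A$ be the (general type) image of $Z$, which generates $\bar A$ and has dimension $n-\dim B$. Its defect is then
\begin{equation*}
2(n-\dim B)-\bigl((2n-1)-\dim B\bigr)=1-\dim B\ge 0,
\end{equation*}
so $\dim B=1$: the subtorus $B=E$ is an elliptic curve and $\bar Z$ has defect $0$. Because the induced fibration $X\to\bar Z$ has a general type base of maximal Albanese dimension, $\chi$ is multiplicative along it, so a desingularization of $\bar Z$ has $\chi=1$ and the general fibre is a curve $F$ with $\chi(\omega_F)=1$, i.e.\ of genus $2$; as $\bar Z$ has maximal irregularity $2(n-1)=2\dim\bar Z$, Hacon--Pardini \cite{HP-g2} give that $\bar Z$ is birational to a product $\prod_{i=1}^{n-1}C_i$ of smooth genus $2$ curves. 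Finally, $a_X$ cannot be birational onto $Z$ (otherwise $\chi(X,\omega_X)$ would equal the Euler characteristic of a desingularization of the non-general-type $Z$, namely $0$); analysing the degree of $a_X$ together with the $E$-action identifies it as an \'etale double cover, the fibre $C_1$ as a bielliptic curve with $C_1/\langle\sigma\rangle=E$, and the monodromy of the isotrivial fibration $X\to\prod_iC_i$ as an \'etale double cover $\widetilde{Z}\to\prod_iC_i$, so that $X\simeq(C_1\times\widetilde{Z})/\langle\tau\rangle$. This is case (ii).

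The main obstacle is precisely this last reconstruction. One must show that the only way $\chi=1$ can persist on a variety whose Albanese image is not of general type is the exact bielliptic configuration of (ii): that the fibrewise involution is the bielliptic involution whose quotient is the subtorus $E$, that the monodromy cover is a \emph{double} cover, and that the $2$-torsion datum defining $\widetilde{Z}$ is compatible with the involution $\tau$ on $C_1\times\widetilde{Z}$. The Pirola surface is the atom of this phenomenon, and the delicate part is to propagate its twisting data consistently through the induction while ruling out every other involution and cover. A secondary difficulty, in the general type case, is to read off the full product-of-theta-divisors structure (and in particular the normality of $Z$) directly from the single $M$-regular summand of $(a_X)_*\omega_X$, rather than assuming it.
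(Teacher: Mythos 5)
Your two-case architecture (Albanese image of general type $\Rightarrow$ case \emph{(i)}; not of general type $\Rightarrow$ case \emph{(ii)}) and the induction on dimension with the Hacon--Pardini/Pirola surface classification as base case is exactly the skeleton of the paper's argument (Proposition \ref{prop:2n-1A} plus the proof of Theorem \ref{thm:2n-1main}). However, in both branches the decisive step is asserted rather than proven, and you in fact flag both of these as ``the main obstacle'' and ``a secondary difficulty'' --- which means the proof is not complete. In the general-type branch, the claim that the $M$-regularity of $(a_X)_*\omega_X$ ``should force $A$ to split'' as a product of principally polarized abelian varieties with $Z$ the product of theta divisors is precisely Pareschi's Question \ref{qu?} in this range; it is the whole content of Proposition \ref{prop:2n-1A} and cannot be quoted from Theorems \ref{thmA} or \ref{thmWC}, which require normality or smoothness in codimension $1$. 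The paper closes this gap by locating a positive-dimensional component $\PB$ of $V^1(\omega_X)$ through the origin, applying Proposition \ref{prop:hpTech} to the induced fibration $\beta:X\to X_B$ to force $\codim\PB\le 3$, and then combining Propositions \ref{prop:base=1}, \ref{prop:fiber=1} and \ref{prop:fibertheta} (with a further quotient in the subcase where $Y_B$ fails to be of general type) to split off theta-divisor factors.

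In the non-general-type branch there are two unproved inputs. First, your ``defect $\ge 0$'' inequality $q(\bar Z)\le 2\dim\bar Z$ is not the Hacon--Pardini bound for $\bar Z$ itself (whose own $\chi$ is irrelevant); it is the bound for a base carrying a direct summand $\cF$ of $\kappa_*(\omega_X\otimes Q)$ with $\chi(\bar Z,\cF)=1$, which is the nontrivial generalization proved as Proposition \ref{prop:hpTech} via Fourier--Mukai methods. Second, and more seriously, the ``last reconstruction'' you defer is where all the work lies: the paper applies Proposition \ref{prop:hpTech} a second time to the complementary summand $\cG$ of $\kappa_*\omega_X=\omega_Z\oplus\cG$ (after checking $\chi(Z,\cG)=1$ via Koll\'ar's $R^1\kappa_*\omega_X\simeq\omega_Z$) to identify $\cG\simeq\omega_Z\otimes Q$ with $Q$ torsion by Simpson's theorem; the cyclic cover attached to $Q$ gives $\widetilde Z$, the computation $q(\widetilde X)=q(\widetilde Z)+2$ lets Proposition \ref{prop:fibertheta} split $\widetilde X\sim C_1\times\widetilde Z$, and the constraints $g(C_1)=2$, $C_1/G$ elliptic, $G$ cyclic force $\deg Q=2$ and the bielliptic involution. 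Without these steps one cannot rule out covers of higher degree or other fibrewise involutions, so as written the proposal does not establish the theorem.
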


This result generalizes to higher dimensions the classification of surfaces $S$ of general type with $p_g(S)=q(S)=3$ established independently by Hacon--Pardini \cite{HP-surf} and Pirola \cite{pirola}.


The classification of the next step, \ie\ $q(X)=2\dim X-2$, seems rather difficult since in this case there does not exist such a nice classification even for surfaces ($\dim X=2$).
Indeed, besides the general example of the double cover of a principally polarized abelian variety $(A,\Theta)$ ramified along a smooth divisor in the linear system $\abs{2\Theta}$, there are other more complicated surfaces satisfying the hypothesis (see \cite{CH2} and \cite{PePo1,PePo2}).

\subsection*{Notation}
Throughout this paper we work over the complex numbers.
More precisely, a variety or a subvariety is a separated scheme of finite type over $\mathbb{C}$ which is integral, \ie\ irreducible and reduced, but not necessarily smooth.
In particular, we will say that a variety is a theta divisor, if it is an irreducible divisor that induces a principal polarization on an abelian variety.

We denote by $a_X: X\to A_X$ the {\em Albanese morphism} of a smooth projective variety $X$.
We say that a variety $X$ is {\em irregular} if $a_X$ is non-trivial, \ie\ $q(X):=h^0(X,\Omega_X^1)\neq 0$.
When $a_X$ is generically finite onto its image, we will say that $X$ is of {\em maximal Albanese dimension}.
Given an abelian variety $A$, we will call $\PA$ its dual, usually interpreted as $\Pic^0A$.
For a morphism $a: X\to A$ to an abelian variety and a coherent sheaf $\cF$ on $X$, we denote by $V^i(\cF, a)$ the {\em $i$-th cohomological support loci}:
\begin{equation*}V^i(\cF, a):=\{P\in\PA\mid H^i(X, \cF\otimes a^*P)\neq 0\}.\end{equation*}
When $a=a_X$, we will often omit it from the notation, \ie\ $V^i(\cF):=V^i(\cF,a_X)$.
We also use $\Vz^i(\cF,a)$ to denote the components of $V^i(\cF, a)$ passing through the identity element of $\PA$.

This paper uses the full machinery of generic vanishing theory, applied to irregular varieties (see \cite{pareschi} for a nice survey).
For example, we will often use that the irreducible components of $V^i(\omega_X,a)$ are abelian subvarieties of $\PB\subseteq \PA$ translated by a torsion point $P\in \PA$, so we will usually denote them by $P+\PB$ (see \cite{GL1,GL2,sim}).
Moreover, if $X$ is of maximal Albanese dimension, then $\codim_\PA V^i(\omega_X,a)\geq i$ for all $i\geq 0$ (see \cite{GL1}).
We will also use the {\em derivative complex} introduced in \cite{GL2}, under the form used in \cite{el}.
When $\cG$ is a sheaf on an abelian variety $A$, we will say that $\cG$ is a {\em GV}-sheaf, if $\codim_\PA V^i(\cG)\geq i$ for all $i\geq 0$ (see \cite[Prop./Def.~2.1]{reg3})
and we will say that $\cG$ is {\em M-regular}, if $\codim_\PA V^i(\cG)>i$ for all $i>0$ (see \cite[Def.~2.1]{pp-reg1}).

For a smooth projective variety $X$, we denote by $\Db(X)$ the bounded derived category of coherent sheaves on $X$.
We refer to \cite{huy} for basic notions on derived categories.

\subsection*{Plan of the paper}
The paper is organized as follows.
Section \ref{sec:prelim} collects some basic facts about morphisms $a:X\to A$ from a smooth projective variety $X$ to an abelian variety $A$.
In particular, we see what happens when $a(X)$ is of general type and $\chi(X,\omega_X)=1$ (Section \ref{ssec:chi=1}).
We also study fibrations where the general fiber is a theta divisor (Section \ref{ssec:fibrTheta}).
Some of these results might be well-known to the experts.

Section \ref{sec:subvA} concerns the case of subvarieties of abelian varieties $Y\subset A$.
We let $X\stackrel{f}\to Y$ be a desingularization of $Y$.
First we review a result of Pareschi (Section \ref{ssec:pareschi}) that allows us to restrict to the case where $V^1(\omega_X,a)$ has positive dimension.
In Section \ref{ssec:divisorcase}, we consider the case where $Y$ has codimension $1$ in $A$.
Finally, in Section \ref{ssec:sub-sm-1}, we consider the general case where $Y$ is smooth in codimension $1$.
The main result of this section is Theorem \ref{thm:sm-in-codim1-sav} and we deduce Theorem \ref{thmA} from it.
Theorem \ref{thm:sm-in-codim1-sav} is proven by induction on the number of irreducible components of $\Vz^1(\omega_X,a_X)$.
Hence, the irreducible components of $\Vz^1(\omega_X,a_X)$ and their relation with the irreducible components of $V^i(\omega_X,a)$ for $i\geq 1$, play a major role in Section \ref{sec:subvA}.

Section \ref{sec:mAd} concerns the case where $X$ is a smooth projective variety of maximal Albanese dimension with $\chi(X,\omega_X)=1$.
First, we classify these varieties when they have submaximal irregularity.
We use Fourier-Mukai techniques to prove Proposition \ref{prop:hpTech}, which is the main technical tool to prove the main Theorem \ref{thm:2n-1main} of this section (which corresponds to Theorem \ref{thmQ=2D-1}).
Finally, we recall a question of Pareschi (see Question \ref{qu?}) and we deduce Theorem \ref{thmWC} (see Theorem \ref{thm:sm-in-codim1}) from Theorem \ref{thm:sm-in-codim1-sav}.

\subsection*{Acknowledgements}
It is a pleasure to thank Miguel \'Angel Barja, Olivier Debarre, Christopher Hacon, Joan Carles Naranjo, Rita Pardini, and Gian Pietro Pirola for very useful conversations and comments.

Special thanks are due to Giuseppe Pareschi who informed us that he has also proven Theorem \ref{thmWC} under the hypothesis that $a_X(X)$ is normal.
We are very grateful to him for sending us a preliminary version of his paper.
We would also like to thank the unknown referee for suggesting improvements to the exposition.

\bigskip

\section{Preliminaries}\label{sec:prelim}

The following list of equivalences is certainly well-known to the experts.
We state it for easy reference since it will be needed repeatedly hereafter.
\begin{lemm}\label{lem:hypo3}
	Let $a:X\to A$ be a generically finite morphism from $X$ a smooth projective variety to $Y:=a(X)$ a subvariety of an abelian variety $A$.
	Then the following conditions are equivalent:
	\begin{enumerate}
	\item $Y$ is not fibred by positive dimensional abelian varieties;
	\item $Y$ is of general type;
	\item $a_{*}\omega_X$ is a M-regular sheaf;
	\item for any $f: V\to X$ surjective morphism where $V$ is smooth projective, any direct summand of $a_{*}R^jf_*\omega_V$ is M-regular for any $j\geq0$;
	\end{enumerate}
Moreover, the previous equivalent conditions imply that $\chi(X,\omega_X)>0$.
\end{lemm}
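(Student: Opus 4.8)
The plan is to run the chain $(ii)\Leftrightarrow(i)\Rightarrow(iv)\Rightarrow(iii)\Rightarrow(ii)$ and then read off $\chi(X,\omega_X)>0$ from M-regularity. Throughout I may assume $Y$ generates $A$, since enlarging $A$ changes neither the general-type condition nor, because codimension is preserved under pullback along a surjection of dual abelian varieties, the M-regularity of the sheaves in question. The equivalence $(i)\Leftrightarrow(ii)$ is pure structure theory of subvarieties of abelian varieties: by Ueno's theorem \cite[Thm.~10.9]{ueno}, if $K:=\operatorname{Stab}(Y)^{0}$ is the connected stabilizer and $p\colon A\to\bar A:=A/K$, then $Y=p^{-1}(\bar Y)$ with $\bar Y:=p(Y)$ of general type; thus $Y$ is of general type exactly when $K=0$, i.e.\ exactly when $Y$ is not fibred by positive-dimensional abelian subvarieties.

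The argument rests on two generic-vanishing inputs that I would record first. Since $X$ is of maximal Albanese dimension, $a_{*}\omega_X$ is a \emph{GV}-sheaf, and more generally, combining Koll\'ar's decomposition of $Rf_{*}\omega_V$ with \cite{GL1}, every sheaf $a_{*}R^{j}f_{*}\omega_V$ and each of its direct summands is \emph{GV}; in particular $\codim_{\PA}V^{i}\ge i$ for all $i\ge0$, and the components of each $V^{i}$ are torsion translates $P+\PB$ of subtori of $\PA$ \cite{GL1,GL2,sim}. The crucial structural principle, coming from the derivative complex of Green--Lazarsfeld and Ein--Lazarsfeld \cite{GL2,el}, is that an \emph{extremal} component, one of codimension exactly $i$ with $i>0$, is pulled back from a proper quotient $A\to A/K'$ with $\dim K'=i$, and forces the induced map $X\to A/K'$ to have positive-dimensional general fibre sweeping out translates of $K'$; hence it exhibits $Y$ as fibred by the positive-dimensional abelian subvariety $K'$.

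Granting these, I would prove $(iii)\Rightarrow(ii)$ in the contrapositive as $\neg(i)\Rightarrow\neg(iii)$. With $K\ne0$ as above, take a smooth Stein factorization $X\xrightarrow{g}Z\xrightarrow{h}\bar A$ of $\bar a:=p\circ a$, so the general fibre of $g$ has dimension $k:=\dim K>0$ and $R^{k}g_{*}\omega_X\cong\omega_Z$ by Koll\'ar, whence $R^{k}\bar a_{*}\omega_X\cong h_{*}\omega_Z$ because $h$ is finite. As $\bar Y$ is of general type, $\chi(Z,\omega_Z)>0$ and therefore $V^{0}(\omega_Z,h)=\widehat{\bar A}$; viewing $p^{*}\widehat{\bar A}\subseteq\PA$ as a subtorus of codimension $k$, the Leray/Koll\'ar decomposition gives $p^{*}\widehat{\bar A}\subseteq V^{k}(\omega_X,a)$. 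Hence $\codim_{\PA}V^{k}(\omega_X,a)=k>0$ and $a_{*}\omega_X$ is not M-regular. (The input $\chi(\omega_Z)>0$ is the ``moreover'' in lower dimension, so the whole statement is most cleanly set up as an induction on $\dim X$.) Conversely $(i)\Rightarrow(iv)$: if $Y$ is of general type, no summand of any $a_{*}R^{j}f_{*}\omega_V$ can possess an extremal component, since by the structural principle such a component would fibre $Y$ by a positive-dimensional abelian subvariety, contradicting $(i)$; being \emph{GV} with no extremal component is precisely M-regularity. Finally $(iv)\Rightarrow(iii)$ is immediate on taking $V=X$, $f=\mathrm{id}$, $j=0$. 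For the last assertion, M-regularity of the nonzero sheaf $a_{*}\omega_X$ gives $\chi(A,a_{*}\omega_X)>0$ \cite{reg3,pp-reg1}, and for general $P\in\PA$ generic vanishing yields $\chi(X,\omega_X)=h^{0}(X,\omega_X\otimes a^{*}P)=h^{0}(\PA,a_{*}\omega_X\otimes P)=\chi(A,a_{*}\omega_X)>0$.

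I expect the genuine obstacle to be the direction $(i)\Rightarrow(iv)$: one must control \emph{all} direct summands of \emph{all} the adjoint-type sheaves $a_{*}R^{j}f_{*}\omega_V$, and verify that the dictionary ``extremal component $\leftrightarrow$ abelian fibration'' applies verbatim to these summands (whose supports sit inside $Y$ but need not span a fixed quotient), so that general type really does exclude every extremal component. The remaining delicate point, handled by the induction above, is the appeal to $\chi(\omega_Z)>0$ for the general-type image $\bar Y$ inside the proof of $\neg(i)\Rightarrow\neg(iii)$.
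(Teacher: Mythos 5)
Your proof is correct and follows essentially the same route as the paper's: Ueno's theorem for \emph{(i)}$\Leftrightarrow$\emph{(ii)}, the Green--Lazarsfeld/Ein--Lazarsfeld derivative-complex argument identifying a codimension-$i$ component of $V^i$ with a fibration of $Y$ by a positive-dimensional abelian subvariety for \emph{(i)}$\Rightarrow$\emph{(iii)},\emph{(iv)}, and the Stein factorization of $X\to A/K$ together with Koll\'ar's $R^k g_*\omega_X\cong\omega_Z$ and the positivity of $\chi$ for the general-type quotient for the converse. The only deviations are organizational: you obtain $\chi(Z,\omega_Z)>0$ by induction on dimension where the paper simply cites \cite[Thm.~3]{el}, and you deduce the final $\chi(X,\omega_X)>0$ from M-regularity via $V^0=\PA$ and invariance of the Euler characteristic, which is precisely the alternative argument recorded in Remark \ref{rem:chi>0-Xgt}\emph{(ii)}, whereas the proof in the text cites \cite{GL1} and \cite[Thm.~3]{el}.
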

\begin{proof}
The equivalence of {\it (i)} and {\it (ii)} is due to Ueno (see \cite[Thm.~10.9]{ueno} or \cite[Thm.~3.7]{mor}).

We now show that {\it (i)} is equivalent to {\it (iii)}.

If {\it (i)} holds and $a_{*}\omega_X$ is not M-regular, then there exists an irreducible component $T$ of $V^i(a_{*}\omega_X)$ of codimension $i$ in $\PA$ for some $i\geq 1$, by generic vanishing (e.g.,~\cite[Thm.~2.2]{HP-g2}).
Then as in \cite[Proof of Thm.~3, p.~249]{el} and using \cite[\S4]{GL2}, we get that $Y$ is fibred by the fibres of the natural fibration $A\to \PT$, which is a contradiction (see also \cite[Rmk.~1.6]{el}).

Now assume that {\it (iii)} holds.
Let $K$ be the maximal abelian subvariety of $A$ such that $K+Y=Y$.
Let $j=\dim K$.
By \cite[Thm.~3]{el}, the image $Z$ of the projection $Y\to A/K$ is of general type and given any desingularization $\mu: Z'\rightarrow Z$,
we have $\chi(Z', \omega_{Z'})>0$.
Consider an appropriate birational model of the Stein factorization of the morphism $X\rightarrow Y \rightarrow Z$, such that there exists $Z''$ a smooth projective variety that admits a generically finite and surjective morphism $g:Z''\rightarrow Z'$ and a fibration $h:X\rightarrow Z''$.
Observe that $\chi(Z'',\omega_{Z''})\geq \chi(Z',\omega_{Z'}) >0$.
We have $\omega_{Z''}\simeq R^jh_*\omega_X$ (\cite[Prop.~7.6]{ko1}).
Since $Z''$ has positive Euler characteristic, we have that
$$\widehat{A/K} = V^0(\omega_{Z''},\mu\circ g)=V^0(R^j h_*\omega_X,\mu\circ g).$$
Thus, Koll\'{a}r's theorem \cite[Thm.~3.1]{ko2}, or the refined version by Hacon--Pardini \cite[Thm.~2.1]{HP-g2}, implies that the codimension-$j$ abelian subvariety $\widehat{A/K}\subset \PA$ is an irreducible component of $V^j(a_{*}\omega_X)$.
Therefore, $j=0$ and $Y$ is not fibred by positive dimensional abelian varieties.

On the one hand, condition {\it (iv)} clearly implies {\it (iii)}.
On the other hand, if $a_{*}R^jf_*\omega_V$ is not M-regular, then there exists an irreducible component $T$ of $V^i(a_{*}R^jf_*\omega_V)$ of codimension $i$ in $\PA$ for some $i\geq 1$, by \cite[Thm.~2.2]{HP-g2}.
As before, the same argument as in \cite[Proof of Thm.~3, p.~249]{el} using \cite[\S4]{GL2}, yields to $Y$ being fibred by the fibres of the natural fibration $A\to \PT$, which is a contradiction.
Since a direct summand of a M-regular sheaf is M-regular, we are done.

In any case, since $X$ is of maximal Albanese dimension, then $\chi(X,\omega_X)\geq 0$ (see \cite[Cor., p.~390]{GL1}).
Moreover, by \cite[Thm.~3]{el}, {\it (i)} implies $\chi(X,\omega_X)>0$.
\end{proof}

\begin{rema}$\quad$\label{rem:chi>0-Xgt}
\begin{enumerate}
	\item On the one hand, in the notation of Lemma \ref{lem:hypo3}, if $\chi(X,\omega_X)>0$, then $X$ is of general type.
	This is certainly not the converse to the last statement of Lemma \ref{lem:hypo3}, since $X$ of general type does not imply that $Y$ is of general type (see \cite[Ex.~1.13]{el}).

	The fact that $\chi(X,\omega_X)>0$ implies that $X$ is of general type follows from \cite{Campana} or \cite[Cor.~2.4]{CH1}\footnote{There is a small misprint in the statement of this corollary and $q(X)$ should be $\dim A$.}.

	\item On the other hand, the fact that the equivalent conditions imply $\chi(A,a_*\omega_X)=\chi(X,\omega_X)>0$ can be also deduced via a more general argument from the fact that $a_*\omega_X$ is a M-regular sheaf. \\
	Indeed, we recall that if $\cG$ is a non-trivial M-regular sheaf on $A$, then $V^0(\cG)=\PA$ (see for instance \cite[Lem~1.12]{pareschi}) and, by the invariance of the Euler characteristic, $\chi(A,\cG)>0$.
\end{enumerate}
\end{rema}

\medskip

From now on, we will always have $a:X\to A$ a generically finite morphism from $X$ a smooth projective variety to $Y:=a(X)$ a subvariety of an abelian variety $A$.
For simplicity we will assume that no translate of $Y$ is contained in any abelian subvariety of $A$, \ie\ $Y$ generates $A$.
We will view this setting from two different angles.
\begin{enumerate}
	\item The first point of view considers $Y$ given and let $f:X\to Y$ be a desingularization.
		Thus, $f:X\to Y$ will always be a birational morphism, but $a:X\to A$ is not necessarily the Albanese morphism of $X$.\\
		This will be treated in Section \ref{sec:subvA}.
	\item The second point of view considers $X$ a given variety of maximal Albanese dimension and let $a:X\to A$ be the Albanese morphism, \ie\ $a_X:X\to A_X$.
		In this case, $f:X\to Y$ is not necessarily birational, since it may have degree greater than 1. \\
		This will be treated mainly in Section \ref{sec:mAd}.
\end{enumerate}
Note that, in both cases $X$ is of maximal Albanese dimension.
Clearly, the study of the two problems is very interrelated.

Recall that by Lemma \ref{lem:hypo3} and Remark \ref{rem:chi>0-Xgt}{\it(i)}, we have that $\chi(X,\omega_X)=0$ implies that $Y$ is not of general type, \ie\ $Y$ is fibred by tori.
So, from both perspectives, we will be mainly interested in the``next'' case, \ie\ when $\chi(X,\omega_X)=1$.

\subsection{The hypothesis $\chi(X,\omega_X)=1$}\label{ssec:chi=1}
Let $a:X\to A$ be a generically finite morphism from $X$ a smooth projective variety to $Y:=a(X)$ a subvariety of an abelian variety $A$.
Before trying to study our problem from the two different perspectives, we give some general results.
\begin{lemm}[{\cite[Thm.~5.2.2]{Ti}}]\label{lem:sof}
	Let $a:X\to A$ be a generically finite morphism from $X$ a smooth projective variety to $Y:=a(X)$ a subvariety of an abelian variety $A$.

	If $Y$ is of general type and $\chi(X,\omega_X)=1$, then $a: X\to A$ induces a birational equivalence between $X$ and $Y$.
\end{lemm}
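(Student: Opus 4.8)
The plan is to prove that the degree $d:=\deg(a\colon X\to Y)$ equals $1$. First I would reduce to a morphism between smooth projective varieties. Fix a resolution $\epsilon\colon\tilde{Y}\to Y$ and, after resolving the induced rational map, choose a smooth projective $X'$ together with a birational morphism $X'\to X$ and a generically finite morphism $g\colon X'\to\tilde{Y}$ of degree $d$, so that the resulting triangles commute. Write $\bar\epsilon\colon\tilde{Y}\to A$ for the composition $\tilde{Y}\to Y\hookrightarrow A$ and $a'=\bar\epsilon\circ g\colon X'\to A$. Since $X'\to X$ is birational and both varieties are smooth projective, $a'_{*}\omega_{X'}\cong a_{*}\omega_X$, so nothing is lost by working with $g$ and $a'$.

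The key step is to split off $\omega_{\tilde{Y}}$ from $g_*\omega_{X'}$. Because $g$ is generically finite of degree $d$ between smooth projective varieties over $\mathbb{C}$, pullback of top forms gives a natural injection $\omega_{\tilde{Y}}\hookrightarrow g_*\omega_{X'}$ whose composition with the trace map $g_*\omega_{X'}\to\omega_{\tilde{Y}}$ is multiplication by $d$; as we work in characteristic $0$, this yields a splitting
\[
g_*\omega_{X'}\cong\omega_{\tilde{Y}}\oplus\cQ ,
\]
where $\cQ$ is torsion-free of rank $d-1$, being a subsheaf of the torsion-free sheaf $g_*\omega_{X'}$. Pushing forward by $\bar\epsilon$ and using $a'_*\omega_{X'}\cong a_*\omega_X$ gives a direct sum decomposition $a_*\omega_X\cong\bar\epsilon_*\omega_{\tilde{Y}}\oplus\bar\epsilon_*\cQ$ of sheaves on $A$.

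I would then feed this into an Euler-characteristic count. Since $Y$ is of general type, Lemma \ref{lem:hypo3} shows that $a_*\omega_X$ is M-regular, and a direct summand of an M-regular sheaf is again M-regular, so both $\bar\epsilon_*\omega_{\tilde{Y}}$ and $\bar\epsilon_*\cQ$ are M-regular. Using generic vanishing on $X$ and M-regularity on $A$, for a general $P\in\PA$ one has $\chi(X,\omega_X)=h^0(X,\omega_X\otimes a^*P)=h^0(A,a_*\omega_X\otimes P)=\chi(A,a_*\omega_X)$, so $\chi(A,a_*\omega_X)=1$. By Remark \ref{rem:chi>0-Xgt}{\it(ii)}, every nontrivial M-regular sheaf has strictly positive Euler characteristic; hence $\chi(A,\bar\epsilon_*\omega_{\tilde{Y}})\geq 1$, and additivity of $\chi$ forces $\chi(A,\bar\epsilon_*\cQ)\leq 0$. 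Were $\bar\epsilon_*\cQ$ nonzero it would be a nontrivial M-regular sheaf with $\chi>0$, a contradiction; thus $\bar\epsilon_*\cQ=0$. As $\cQ$ is torsion-free and $\bar\epsilon$ is birational onto $Y$, this gives $\cQ=0$, whence $g_*\omega_{X'}\cong\omega_{\tilde{Y}}$ has generic rank $1$ and $d=1$.

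The main obstacle is making the trace decomposition rigorous, and in particular checking that $\bar\epsilon_*\cQ$ is a genuine direct summand of $a_*\omega_X$ so that M-regularity is inherited, together with the comparison $\chi(A,a_*\omega_X)=\chi(X,\omega_X)$. Once these are in place, the positivity of the Euler characteristic of nontrivial M-regular sheaves does all the remaining work.
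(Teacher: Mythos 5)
Your proposal is correct and follows essentially the same route as the paper: split $\omega_{\tilde Y}$ off $g_*\omega_{X'}$ via the trace map, push forward to $A$, use M-regularity of both summands (from $Y$ being of general type via Lemma \ref{lem:hypo3}) together with positivity of the Euler characteristic of nontrivial M-regular sheaves, and conclude $\cQ=0$ by additivity. The only cosmetic difference is that the paper invokes Koll\'ar's theorem for $\chi(A,a_*\omega_X)=\chi(X,\omega_X)$ where you use generic vanishing; both are standard and valid.
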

\begin{proof}
	We can consider a birational model of $f:X\to Y$ such that $X$ and $Y$ are smooth and we denote by $g:Y\to A$ the morphism that is birational onto its image.
	Then, since $f$ is generically finite, the trace map $f_*\omega_X\twoheadrightarrow \omega_Y $ splits the natural inclusion $\omega_Y\rightarrow f_*\omega_X$.
	Hence $f_*\omega_X\simeq \omega_Y\oplus \cQ$, where $\cQ$ is the kernel of the trace map, and $a_*\omega_X\simeq g_*\omega_Y\oplus g_*\cQ$.
	By Lemma \ref{lem:hypo3}, $g_*\omega_{Y}$ and $a_*\omega_{X}$ are M-regular, so, by Remark \ref{rem:chi>0-Xgt}{\it(ii)}, either $\cQ$ is trivial and $f$ is birational or $g_*\cQ$ is M-regular and $\chi(A,g_*\cQ)>0$.

	By \cite[Thm.~3.4]{ko2}, $\chi(A,a_*\omega_X)=\chi(X,\omega_X)=1$ and $\chi(A,g_*\omega_{Y})=\chi(Y,\omega_Y)$.
	Since, by hypothesis $Y$ is of general type, Lemma \ref{lem:hypo3} implies that $g_*\omega_{Y}$ is M-regular, so $\chi(A,g_*\omega_{Y})>0$.
	Thus, $\chi(A, g_*\cQ)=0$.
	Hence $\cQ=0$ and $f$ is birational.
\end{proof}

Combining Lemma \ref{lem:hypo3} with \cite[Lem.~3.2]{HP-g2}, we have
\begin{lemm}\label{lem:V^1}
	Let $a:X\to A$ be a generically finite morphism from $X$ a smooth projective variety to $Y:=a(X)$ a subvariety of an abelian variety $A$.
	Suppose $Y$ is of general type and $\chi(X,\omega_X)=1$.

	If $T$ is any irreducible component of $V^1(\omega_X,a)$ of $\codim_{\PA}T=i+1$, then $T$ is an irreducible component of $V^i(\omega_X,a)$.
\end{lemm}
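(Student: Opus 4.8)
The plan is to read the conclusion off two inputs: the $M$-regularity of $a_{*}\omega_X$ furnished by Lemma \ref{lem:hypo3}, which pins down the codimensions of all the loci $V^j(\omega_X,a)$, and a fibration argument (the content of \cite[Lem.~3.2]{HP-g2}) that transports the nonvanishing of $H^1$ into degree $i$.

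First, since $Y$ is of general type, Lemma \ref{lem:hypo3} shows that $a_{*}\omega_X$ is $M$-regular, so $\codim_{\PA}V^j(\omega_X,a)\geq j+1$ for every $j\geq1$; in particular every irreducible component of $V^i(\omega_X,a)$ has codimension at least $i+1$. Because of this it suffices to prove the inclusion $T\subseteq V^i(\omega_X,a)$: if $S$ is any component of $V^i(\omega_X,a)$ containing $T$, then $i+1\leq\codim_{\PA}S\leq\codim_{\PA}T=i+1$, and $T\subseteq S$ with $\dim T=\dim S$ forces $T=S$, so $T$ is itself a component.

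To obtain this inclusion I would write $T=P+\PB$ and consider the quotient $\pi\colon A\to B$ dual to $\PB\hookrightarrow\PA$, so that $\dim\ker\pi=i+1$. Replacing $X$ by a birational model, take the Stein factorization $X\xrightarrow{f}Z\xrightarrow{h}B$ of $\pi\circ a$, with $f$ a fibration with connected fibres and $h$ generically finite onto its image; by the structure theory of the cohomology support loci (\cite{GL2}) one has $\PB=f^{*}\PAZ$. Since $Y$ is of general type so is its image $Z$, hence by Lemma \ref{lem:hypo3}\,{\it (iv)} every sheaf $R^qf_{*}\omega_X$ is $M$-regular on $Z$. Using Kollár's splitting $Rf_{*}\omega_X\simeq\bigoplus_q R^qf_{*}\omega_X[-q]$ (\cite{ko2}) and the projection formula, for a general $P'\in T$ (writing $a^{*}P'\simeq a^{*}P\otimes f^{*}Q'$ with $Q'\in\PAZ$ general) the only surviving summands in $H^m(X,\omega_X\otimes a^{*}P')\simeq\bigoplus_{p+q=m}H^p(Z,R^qf_{*}\omega_X\otimes Q')$ are those with $p=0$. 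Restricting to a general fibre $F$ this reduces the problem to a single fibrewise nonvanishing,
\begin{equation*}
T\subseteq V^m(\omega_X,a)\quad\Longleftrightarrow\quad H^m(F,\omega_F\otimes L_0)\neq0,
\end{equation*}
where $L_0:=\res{a^{*}P}{F}$ is a fixed torsion line bundle on $F$ (independent of $P'$, since the $\PB$-part is trivial along the fibres of $\pi$).

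The heart of the matter — and the step I expect to be the main obstacle — is then to identify the top degree $r:=\max\{m:T\subseteq V^m(\omega_X,a)\}=\max\{m:H^m(F,\omega_F\otimes L_0)\neq0\}$. The $M$-regularity of Lemma \ref{lem:hypo3} gives one inequality for free: a component of $V^r(\omega_X,a)$ containing $T$ has codimension $\geq r+1$ and $\leq\codim_{\PA}T=i+1$, whence $r\leq i$. The reverse inequality $r\geq i$ — which then yields $T\subseteq V^r=V^i$ and finishes the argument — is the delicate point. It can fail for a general variety of maximal Albanese dimension, and genuinely uses that $T$ is a \emph{maximal} component of $V^1(\omega_X,a)$ together with the hypothesis $\chi(X,\omega_X)=1$: these force $a(F)$ to generate $\ker\pi$ and the general fibre to behave like a theta divisor of dimension $i$ inside its Albanese variety (so that $\dim F=i$ and the top fibrewise cohomology survives). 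This is precisely the input I would extract from \cite[Lem.~3.2]{HP-g2}; combined with the codimension bound from Lemma \ref{lem:hypo3}, it gives $H^i(F,\omega_F\otimes L_0)\neq0$, hence $T\subseteq V^i(\omega_X,a)$, and the reduction of the first step concludes.
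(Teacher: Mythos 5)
Your argument is correct and is essentially the proof in the paper: the inclusion $T\subseteq V^i(\omega_X,a)$ is exactly the content of \cite[Lem.~3.2]{HP-g2}, and the upgrade from inclusion to irreducible component follows from the M-regularity of $a_*\omega_X$ (Lemma \ref{lem:hypo3}), which forces every component of $V^i(\omega_X,a)$ with $i\geq 1$ to have codimension at least $i+1$. Your attempted fibrewise re-derivation of the cited lemma is not needed and is not how Hacon--Pardini prove it (they use the derivative complex restricted to a transversal slice, as in the proof of Proposition \ref{prop:base=1}); since you ultimately defer to the citation for that delicate step, the proof stands as written.
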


\begin{proof}
By \cite[Lem.~3.2]{HP-g2}, we know that $T\subset V^i(\omega_X,a)$.
Since $a_{*}\omega_X$ is M-regular, $T$ is an irreducible component of $V^i(a_*\omega_X)=V^i(\omega_X,a)$.
\end{proof}

\subsection{Fibrations in theta divisors}\label{ssec:fibrTheta}
Under the hypothesis $\chi(X,\omega_X)=1$, the main aim will be to construct a fibration from $Y$ to some other variety such that the fibres are theta divisors of a fixed abelian subvariety of $A$.
Fibrations by theta divisors are very special.
In particular, they are isotrivial, so very close to being a product.
Indeed, the following proposition shows that they are products under some mild extra assumptions (compare with \cite[Thm.~5.2]{zhang}).
The idea of the proof goes back to \cite{serrano} and it was already used in \cite[proof of Thm.~3.1]{HP-g2}.

\begin{prop}\label{prop:fibertheta}
Let $f: X\to Y$ be a fibration between smooth projective varieties of maximal Albanese dimension.
Assume that a general fiber $X_y$ is birational to a theta divisor $\Theta$ and $q(X)-q(Y)>\dim X-\dim Y$.
Then $X$ is birational to $\Theta\times Y$.
\end{prop}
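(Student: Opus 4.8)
The plan is to extract a \emph{fixed} theta divisor sitting inside $A_X$ and to show that the fibration becomes a product once each fibre is translated back onto it. First I would use functoriality of the Albanese to produce the surjection $p\colon A_X\twoheadrightarrow A_Y$ induced by $f$, and set $K:=(\ker p)^0$, so that $\dim K=q(X)-q(Y)>\dim X-\dim Y=\dim\Theta$. For a general $y$ the restriction $a_X|_{X_y}\colon X_y\to A_X$ is generically finite onto its image (as $X$ has maximal Albanese dimension) and, since $X_y$ is birational to the theta divisor $\Theta$ generating its Albanese variety $B=\Alb(X_y)$ with $\dim B=\dim\Theta+1$, it factors through a homomorphism $\psi_y\colon B\to A_X$ followed by a translation, with image $a_X(X_y)=t_y+\psi_y(\Theta)$ contained in a coset of $K$.

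The first key point is a rigidity argument in the spirit of Serrano. As $y$ varies over the irreducible base $Y$, the homomorphisms $\psi_y$ form an algebraic family inside the discrete group $\operatorname{Hom}(B,A_X)$, hence $\psi_y\equiv\psi$ is constant; thus every fibre maps to a translate $t_y+\psi(\Theta)$ of the \emph{same} subvariety $\psi(\Theta)\subseteq A_X$, only the translation $t_y$ depending on $y$.

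The second, and decisive, point is where the numerical hypothesis enters: I claim that $\psi$ is injective. Indeed, the quotient $q\colon A_X\to A_X/\psi(B)$ contracts each fibre $X_y$ to a point, so $q\circ a_X$ factors through $f$ and, by the universal property of $A_Y$ together with the fact that $a_X(X)$ generates $A_X$, yields a surjection $A_Y\twoheadrightarrow A_X/\psi(B)$. This gives $q(Y)\ge q(X)-\dim\psi(B)$, i.e. $\dim\psi(B)\ge q(X)-q(Y)>\dim\Theta$, forcing $\dim\psi(B)=\dim\Theta+1=\dim B$. Hence $\psi$ is injective, $B:=\psi(B)$ is an abelian subvariety of $A_X$, the common image $\Theta:=\psi(\Theta)\subset B$ is a genuine theta divisor, and $a_X|_{X_y}$ is now birational onto the translate $t_y+\Theta$.

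Finally I would reconstruct the product. All fibres of $f$ map birationally onto translates of the single theta divisor $\Theta\subset B\subseteq A_X$, the translation class being $\bar a(y)\in A_X/B$. Choosing a morphism $\lambda\colon Y\to A_X$ that lifts this translation map and setting $\sigma:=a_X-\lambda\circ f\colon X\to B$, the pair $(\sigma,f)\colon X\to B\times Y$ has image $\Theta\times Y$ and is birational, since it restricts on a general fibre to the birational map $X_y\dashrightarrow\Theta$ and $f$ separates the fibres; this yields $X\sim\Theta\times Y$. I expect the construction of the recentring morphism $\lambda$ — equivalently, the trivialisation of the family of translated theta divisors, or the splitting of the relevant extension of abelian varieties — to be the main obstacle, and this is precisely the Serrano-type input; the strict inequality exploited above is exactly what guarantees that the fibres are honest theta divisors rather than higher-degree covers of lower-dimensional abelian subvarieties, without which no product structure could hold.
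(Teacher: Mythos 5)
Your route is genuinely different from the paper's: the paper first proves isotriviality (via rigidity of $A_{X_y}$, which the numerical hypothesis shows is isogenous to $K=\ker(A_X\to A_Y)$), then trivializes the family on a finite Galois cover $\widehat{Y}\to Y$ with group $G$, and finally proves that $G$ acts birationally trivially on $\Theta$ by an M-regularity/Euler-characteristic computation on $\Theta/G$. You instead try to rigidify the fibrewise Albanese homomorphisms $\psi_y\colon \Alb(X_y)\to A_X$ and then recentre each fibre onto a fixed divisor. The skeleton is viable, and you correctly locate one use of the hypothesis (the dimension count forcing $\dim\psi(B)=\dim B$). But as written there are two concrete gaps. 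First, the rigidity step ``the $\psi_y$ form an algebraic family inside the discrete group $\operatorname{Hom}(B,A_X)$, hence constant'' presupposes that the family $\{\Alb(X_y)\}_y$ is the \emph{trivial} family $B\times Y$; a priori it is only isotrivial, so $\operatorname{Hom}(\Alb(X_y),A_X)$ is merely a local system and you cannot speak of a constant $\psi$ without first passing to a finite \'etale cover of the base. This is exactly the Serrano-type input, and you have placed it in the wrong spot: you defer it to the construction of $\lambda$, which is in fact the easy step (the translation $t_y$ is \emph{unique} in $A_X$ because a theta divisor has trivial stabilizer, so $y\mapsto t_y$ is a rational map to an abelian variety from a smooth variety and hence a morphism --- no lifting of $A_Y\to A_X/B$ is needed), while presenting the rigidity, where the trivialization is actually consumed, as already done.

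Second, ``$\dim\psi(B)=\dim B$, hence $\psi$ is injective'' is a non sequitur: equality of dimensions only gives that $\ker\psi$ is \emph{finite}, i.e.\ that $\psi$ is an isogeny onto its image. This matters for your subsequent claims: if $\deg\psi=d>1$ then $\psi(\Theta)$ induces a polarization of degree $d^{\dim B-1}>1$ on $\psi(B)$, so it is not a theta divisor, and --- more seriously --- algebraically equivalent effective divisors of a non-principal polarization need not be translates of one another, so your assertion that all fibres map onto translates of one fixed subvariety loses its justification. The conclusion can still be rescued: working on the trivializing cover, the divisors $\Theta_{\tilde y}\subset B$ are principal polarizations of $B$, hence translates of a single $\Theta$, so all fibres of $f$ map birationally onto translates of the fixed divisor $\psi(\Theta)$ (which is birational to $\Theta$ even when $\psi$ is only an isogeny, since $\ker\psi$ does not stabilize $\Theta$), and the recentring then gives $X\sim\psi(\Theta)\times Y\sim\Theta\times Y$. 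But that repair is not in your text; as written, both the trivialization and the injectivity are asserted rather than proved, and these are precisely the points where the paper's proof does its real work.
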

\begin{proof}
We denote by $K$ the kernel of the natural quotient $f_*: A_X\to A_Y$. Then $\res{a_X}{X_y}: X_y\rightarrow K_y$ is a generically finite morphism, where $K_y$ is the corresponding translate of $K$ in $A_X$.
From the universal property of Albanese morphisms, we know that the induced morphism $A_{X_y}\to K_y$ is surjective.
By hypothesis, $\dim K=q(Y)-q(X) \geq \dim X_y+1=\dim A_{X_y}$.
Thus, $A_{X_y}$ is isogeny onto $K$, so $A_{X_y}$ is fixed, which implies that $f$ is isotrivial.

Note that, since $\Theta$ has canonical singularities, the canonical model of $X_y$ is fixed and isomorphic to $\Theta$.
Thus, the fibers of the relative canonical model $X'\stackrel{f}\to Y$ are isomorphic to $\Theta$ in an open Zariski subset of $Y$.
Since $\Theta$ is of general type, $\mathrm{Aut}(\Theta)$ is finite.
Therefore, there exists a  finite Galois cover $\PY\to Y$ with group $G$ such that the main component of $X'\times_Y\PY$ is birational to $\Theta\times \PY$.
 So we can consider the commutative diagram
\begin{equation*}
	\xymatrix{
	\Theta\times \PY \ar[r]_{/G}\ar[d] &X'\ar[d]^{f'}\\
	\PY\ar[r]_{/G} &Y,
	}
\end{equation*}
where $G$ acts on $\Theta\times \PY$ diagonally and the horizontal morphisms are quotients by $G$.

Now, we consider $V:=\Theta/G$ and the following commutative diagram
\begin{equation*}
\xymatrix@!C{&W \ar[r]^{\varphi} & V \ar[r]^{a_V}& A_V\\
W\times \PY \ar[ur]^{p_1}\ar[r]\ar[d]_{p_2} & {X'}\ar[ur]^g\ar[r]^{a_{X'}}\ar[d]_{f'}& A_{X'}\ar[d]\ar[ur]\\
\PY\ar[r] & Y\ar[r]^{a_Y}& A_Y,}
\end{equation*}
where $W$ is birational to $\Theta$ and after birational modifications and abusing notation, we will assume that all varieties are smooth projective.
We want to prove that $\varphi$ is birational.

We denote by $K$ the kernel of $A_{X'}\to A_Y$.
Note that, $h^1(X',\cO_{X'})= (h^1(\PY,\cO_{\PY})\oplus h^1(W,\cO_W))^G$.
Hence, since $q(X)-q(Y)>\dim X-\dim Y$, then $A_{V}$ is isogenous to $K$.
In particular, $\dim K=\dim A_V=\dim \Theta+1$.

Since $W$ is birational to a theta divisor $\Theta$, for any $P\in \PA_V$ such that $\varphi^*t^*P$ is not trivial, we have $h^0(W, \omega_{W}\otimes \varphi^*t^*P)=1$ and
\begin{align*}&h^i(W, \omega_{W}\otimes \varphi^*t^*P)=0 &\text{ for all }i\geq 1.
\end{align*}
By Grauert--Riemenschneider vanishing, $R^i\varphi_*\omega_W=0$ for all $i>0$ and
\begin{equation*}\varphi_*\omega_{W}=\omega_V\oplus \cQ,\end{equation*}
where $\cQ$ is a torsion-free sheaf.

Since $\Theta$ has only rational singularities by \cite[Thm.~1]{el}, $\dim V^i(\omega_W,t\circ \varphi)=0$, so we have also that $\dim V^1(\omega_V, t)=0$, using that  $R^i\varphi_*\omega_W=0$ for all $i>0$.
Hence, $t_*\omega_V$ is M-regular and then $V^0(\omega_V, t)=\Pic^0(A_{X'}/B_2)$.
Thus, for any $P\in \Pic^0(A_{X'}/B_2)$ such that $\varphi^*t^*P$ is not trivial, we also have $h^0(V, \omega_V\otimes t^*P)=1$.
Then $t_*\cQ$ is also M-regular with $\dim V^0(t_*\cQ)=0$, so $t_*\cQ=0$ (recall Remark \ref{rem:chi>0-Xgt}{\it(ii)}) and $\varphi$ is birational.
Therefore, $(g, f): {X'}\to V\times Y$ is birational.
\end{proof}

\begin{rema}
	The condition $q(X)-q(Y) > \dim X -\dim Y$ is necessary.

	Indeed, let $C\to E$ be a bielliptic curve of genus $2$.
	Consider $\widetilde{C}\to C$ an \'etale double cover and $X=(C\times \widetilde{C})/\langle\tau\rangle$, where $\tau$ is an involution acting diagonally on $C$ and $\widetilde{C}$ via the involutions corresponding respectively to the double covers.
	
	Then, $X$ has a natural fibration $X\to C$ such that the general fiber is $C$.
	Note that $q(X)=3$ and $q(C)=2$ and, indeed, $X$ is not birational to a product of curves of genus $2$.
	Varieties in Theorem \ref{thmQ=2D-1}{\it (ii)} generalize this construction to higher dimensions.
\end{rema}

\bigskip

\section{Subvarieties of abelian varieties with holomorphic Euler characteristic 1} \label{sec:subvA}

Let $Y\subseteq A$ be subvariety of an abelian variety $A$.
Without loss of generality, we assume through \emph{all} Section \ref{sec:subvA} that $Y$ generates $A$ as a group.
We also assume that $Y$ is reduced and irreducible but not necessarily smooth.
We consider $X\stackrel{f}\to Y\subseteq A$ a desingularization of $Y$.
We want to study varieties $Y$ such that $\chi(X,\omega_X)=1$.

\subsection{Subvarieties with $\chi(X,\omega_X)=1$ and small $V^i(\omega_X,a)$}\label{ssec:pareschi}

We start by reformulating a theorem of Pareschi (see {\cite[Thm.~5.1]{pareschi}}).

\begin{theo}\label{thm:pareschi}
	Let $X\stackrel{f}\to Y$ be a desingularization of a subvariety $Y\subseteq A$ of an abelian variety and denote $a: X\xrightarrow{f} Y\hookrightarrow A$ the induced morphism to $A$.
	Assume that $\chi(X,\omega_X)=1$.
	If $V^i(\omega_X,a)$ has no components of codimension $i+1$ for all $0<i<\dim X$, then $Y$ is birational to a theta divisor.
	Moreover, the induced morphism $t: A_X\to A$ is an isogeny.
\end{theo}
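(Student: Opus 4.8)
The plan is to exploit the hypothesis on the cohomological support loci together with generic vanishing to force $a_*\omega_X$ to be, up to the isogeny $t$, the pushforward of a theta-divisor's canonical sheaf. First I would reduce to understanding $V^0(\omega_X,a)$: by generic vanishing (\cite{GL1}), since $X$ is of maximal Albanese dimension, every component of $V^i(\omega_X,a)$ has codimension $\geq i$, and $\chi(X,\omega_X)=1>0$ together with Remark \ref{rem:chi>0-Xgt}{\it(i)} tells us $X$ is of general type, while $\chi=1$ pins down the numerical behaviour tightly. The key structural input is that $\chi(X,\omega_X)=1$ means the generic rank of the Fourier--Mukai transform of $a_*\omega_X$ is $1$; combined with the hypothesis that no $V^i$ has a component of the \emph{extremal} codimension $i+1$ for $0<i<\dim X$, this should prevent $a_*\omega_X$ from splitting off any nontrivial M-regular summand.

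The main engine I would use is the \emph{derivative complex} of \cite{GL2} in the form used in \cite{el}, evaluated at a general point $P\in\PA$. The vanishing of components of codimension $i+1$ in $V^i$ is precisely the condition that the derivative complex is exact in the relevant degrees, which by the Ein--Lazarsfeld argument (\cite[Proof of Thm.~3]{el}) controls the possible fibrations of $Y$. Concretely, I would argue that if $t\colon A_X\to A$ were not an isogeny, i.e. if $\dim A_X>\dim A$, then $Y$ would necessarily be fibred by positive-dimensional abelian subvarieties, contradicting Lemma \ref{lem:hypo3} and the general-type conclusion that $\chi(X,\omega_X)=1$ forces. Once $t$ is an isogeny, $A$ and $A_X$ have the same dimension, so $\dim Y=\dim X=\dim A-1$, placing $Y$ as a divisor in $A$.

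It then remains to identify this divisor as a theta divisor. Here I would use that $\chi(X,\omega_X)=1$ and that $a_*\omega_X$ is M-regular (Lemma \ref{lem:hypo3}, via general type) to conclude that $a_*\omega_X$ is a line bundle on its support, or more precisely that the polarization induced by $Y$ on $A$ is principal: the Euler characteristic of the associated line bundle $\cO_A(Y)$ equals $h^0(A,\cO_A(Y))=\chi=1$, so $(A,\cO_A(Y))$ is a principally polarized abelian variety and $Y$ is a translate of its theta divisor. The normality and rational singularities of theta divisors (Ein--Lazarsfeld \cite{el}) then ensure the birational statement is in fact the expected one.

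The step I expect to be the main obstacle is the passage from the numerical/support-loci hypotheses to the conclusion that $t$ is an isogeny rather than merely surjective. Ruling out the case $\dim A_X>\dim A$ requires genuinely using that \emph{all} intermediate $V^i$ lack codimension-$(i+1)$ components simultaneously — a single such component would produce a fibration by the $\moins$-type argument — and carefully tracking how the derivative complex degenerates. The Euler-characteristic bookkeeping showing $\chi(A,\cO_A(Y))=1$ is routine by comparison, relying on \cite[Thm.~3.4]{ko2} to transport $\chi(X,\omega_X)$ to $\chi(A,a_*\omega_X)$.
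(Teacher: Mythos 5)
There are two genuine gaps, both in the steps you identify as the core of the argument. The first concerns how you get that $Y$ is a divisor in $A$. Your route is: (a) if $t\colon A_X\to A$ is not an isogeny then $Y$ is fibred by positive-dimensional abelian subvarieties, and (b) once $t$ is an isogeny, $\dim Y=\dim A-1$. Neither implication is justified. For (a), $\dim A_X>\dim A$ does not by itself force a torus fibration of $Y$, and you give no mechanism producing one. For (b), $t$ being an isogeny only says $q(X)=\dim A$ and gives no control whatsoever on $\codim_AY$ (a product of two theta divisors has $t=\mathrm{id}$ and codimension $2$; it is excluded only by the $V^i$ hypothesis, which your step (b) never invokes). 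The paper's actual mechanism is Lemma \ref{lem:V^1}: a component $T$ of $V^1(\omega_X,a)$ of codimension $j+1$ is a component of $V^j(\omega_X,a)$; M-regularity of $a_*\omega_X$ (Lemma \ref{lem:hypo3}, using that $\chi=1$ forces $Y$ of general type) gives $j\geq 1$, the hypothesis excludes $0<j<\dim X$, and nonvanishing of $V^j$ forces $j\leq\dim X$, so $j=\dim X$; since $Y$ generates $A$, $V^{\dim X}(\omega_X,a)$ is finite, so $T$ is a point of codimension $\dim X+1$ in $\PA$, whence $\dim A=\dim X+1$. The isogeny statement is then a \emph{consequence}, read off at the very end from $q(X)=h^{n-1}(X,\omega_X)=n+1$, not an input.

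The second gap is your final identification: you claim $\chi(A,\cO_A(Y))=h^0(A,\cO_A(Y))=1$, so that $(A,\cO_A(Y))$ is a principal polarization and $Y$ is literally a theta divisor of $A$. This is false and contradicts Remark \ref{rem:notppav_divcase}: if $\Theta\subset A_\Theta$ is a theta divisor and $A_\Theta\to A$ is an isogeny of degree $d>1$ mapping $\Theta$ birationally onto $Y$, all hypotheses of the theorem hold, yet $\chi(A,\cO_A(Y))=d^{\dim A-1}>1$; the conclusion can only be that $Y$ is \emph{birational} to a theta divisor. Koll\'ar's theorem transports $\chi(X,\omega_X)=1$ to $\chi(A,a_*\omega_X)=\chi(A,\cI_Z\otimes\cO_A(Y))$ via the adjoint-ideal sequence of \cite[Prop.~3.1]{el}, and the discrepancy with $\chi(A,\cO_A(Y))$ is exactly $\chi(\cO_Z(Y))$, which need not vanish. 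What the paper does instead is run the derivative complex over $\mathbb{P}(H^1(A,\cO_A))=\mathbb{P}^n$: since $0$ is isolated in $V^1(\omega_X,a)$ and $\chi=1$, the complex is exact with kernel $\cO_{\mathbb{P}^n}(-n-1)$, which yields $h^i(X,\omega_X)=\binom{n+1}{i+1}$, and then Hacon's criterion \cite[Cor.~3.4]{Hac-theta} applied to $X$ itself (with respect to $a_X$, not $a$) gives the birational statement. You correctly name the derivative complex as the engine, but you never extract these Hodge numbers from it, and the substitute endgame you propose proves a stronger, false statement.
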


Pareschi proved the above theorem using Fourier-Mukai techniques and homological algebra.
We give an alternative proof, which is closer to the spirit of this paper.
The ingredients of our proof are Lemma \ref{lem:V^1} and Hacon's characterization of theta divisors (see \cite[Sect.~3]{Hac-theta}).

\begin{proof}
Denote by $n=\dim Y=\dim X$.

We consider an irreducible component $T$ of $V^1(\omega_X,a)=\bigcup_{k\geq 1}V^k(\omega_X,a)$.
Denote by $j+1$ the codimension of $T$ in $\PA$.
By Lemma \ref{lem:V^1}, we know that $T$ is an irreducible component of $V^j(\omega_X,a)$.
By assumption, $j=n$ and $T$ contains $0$ as an isolated point, since $Y$ generates $A$, \ie\ $\Pic^0A\to\Pic^0X$ is an isogeny onto its image.
We then conclude that $\dim A=j+1=n+1$ and $V^1(\omega_X,a)$ contains $0$ as an isolated point.

Now we consider the derivative complex over $\mathbb{P}(H^1(A, \cO_A))=\mathbb{P}^{n}$ as in \cite[Proof of Thm.~3]{el},
\begin{equation*}H^0(X, \omega_X)\otimes \cO_{\mathbb{P}^{n}}(-n)\to\cdots \to H^n(X, \omega_X)\otimes \cO_{\mathbb{P}^{n}}\to 0.\end{equation*}
Since $0$ is an isolated point in $V^1(\omega_X,a)$ and $\chi(X,\omega_X)=1$, the above complex is exact and the kernel on the left is a line bundle $\cO_{\mathbb{P}^{n}}(-t)$.
By chasing through the cohomology of the above complex, we see easily $t=n+1$ and $h^i(X, \omega_X)=\binom{n+1}{i+1}$.
Thus, Hacon's criterion \cite[Corollary 3.4]{Hac-theta} implies that $X$ is birational to a theta divisor.
Hence, so is $Y$.
Since $q(X)=n+1$, it is clear that $t: A_X\to A$ is an isogeny.
\end{proof}

\begin{rema}\label{rem:notppav_divcase}
	Note that $A$ does not need to be principally polarized.
	Indeed, given $\Theta\subset A_{\Theta}$ we can consider any isogeny $A_{\Theta}\to A$ and let $Y\subset A$ the image of $\Theta$.
\end{rema}

The previous theorem exhibits the importance of the codimension-$(i+1)$ irreducible components of $V^i(\omega_X,a)$.
Thus, we place ourselves in the following setting fixing the notation.

\begin{sett}\label{set:c1}
	Let $a:X\to A$ be a generically finite morphism from $X$ a smooth projective variety to $Y:=a(X)$ a subvariety of an abelian variety $A$.
	Suppose that there exists a codimension-$(i+1)$ irreducible component of $V^i(\omega_X,a)$ for some $1\leq i\leq \dim X-1$.

	We can describe this component as $P+\PB$, where $\PB$ is an abelian subvariety of $\PA$ and $P$ is a torsion point (see \cite[Thm.~0.1]{GL2} and \cite[\S4,5,6]{sim}).

	Consider the composition of the $a:X\stackrel{f}\to Y\hookrightarrow A$ and the quotient $\pi_B:A\to B$ given by dualizing the inclusion $\PB\hookrightarrow \PA$.
	Then, denote by $Y_B:=\pi_B(Y)$ and let $X\stackrel{\beta}\to X_B \stackrel{f_B}\to Y_B$ be Stein factorization of $X\to Y_B$.
	Denote by $X_b$ a general fiber of the fibration $\beta$ and denote by $Y_b:=f(X_b)$ and $f_b:=\res{f}{X_b}$.

	Varieties and morphisms described above fit in the following commutative diagram:
	\begin{equation}\label{eqn:xy1}
	\xymatrix{
	X_b\ar@{^{(}->}[d]\ar[r]^{f_b}& Y_b \ar@{^{(}->}[d] \\
	X\ar[r]^f\ar[d]_{\beta} & Y\ar[d]\ar@{^{(}->}[r] & A\ar[d]^{\pi_B}\\
	X_B\ar@/_1.5pc/[rr]_{a_{B}}\ar[r]^{f_B}& Y_B\ar@{^{(}->}[r] & B.}
	\end{equation}
	After birational modifications and abusing notation, we may assume that all varieties in the leftmost column are smooth and projective.
\end{sett}

We summarize some useful properties in the following lemma.
\begin{lemm}[Properties of Setting \ref{set:c1}] \label{lem:propsett1}
With the same notations as in the previous setting.
\begin{enumerate}
\item The morphism $f_B$ is generically finite and the image $Y_b:=f(X_b)$ is an irreducible component of the corresponding fiber of $Y\to B$.
\item If $f$ is birational and $Y$ is of general type, then the morphism $f_b: X_b\to Y_b$ is birational. Moreover, $Y_b$ is an irreducible ample divisor in a translate of the kernel of $\pi_B$.
\end{enumerate}
\end{lemm}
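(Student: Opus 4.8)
The plan is to establish \emph{(i)} by Stein factorization together with a dimension count, and \emph{(ii)} by feeding the general-type hypothesis into Lemma~\ref{lem:hypo3}. Throughout I write $n=\dim X=\dim Y$ and $K:=\ker\pi_B$, so that $\dim K=i+1$ (as $P+\PB$ has codimension $i+1$ in $\PA$). For \emph{(i)} I would observe that $X\xrightarrow{\beta}X_B\xrightarrow{f_B}Y_B$ is the Stein factorization of the surjection $X\twoheadrightarrow Y_B$, so $\beta$ has connected fibers and $f_B$ is finite, in particular generically finite and $\dim X_B=\dim Y_B$. Since $\pi_B\circ a=f_B\circ\beta$, a general fiber $X_b=\beta^{-1}(x_B)$ is carried by $a$ into the single fiber $\pi_B^{-1}(y_B)$, $y_B:=f_B(x_B)$, so that $Y_b=f(X_b)$ is an irreducible subvariety of the fiber $Y\cap\pi_B^{-1}(y_B)$ of $Y\to B$. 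As $f$ is generically finite and $X_b$ general, $f_b$ is generically finite onto $Y_b$, giving
\[
\dim Y_b=\dim X_b=\dim X-\dim X_B=\dim Y-\dim Y_B,
\]
the dimension of the general fiber of $Y\to Y_B$; being irreducible, of this dimension and contained in the fiber, $Y_b$ is a component of it.

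For \emph{(ii)} I would first obtain birationality of $f_b$: since $f$ is birational it is an isomorphism over a dense open $U\subseteq Y$, and for general $b$ the dense open $f^{-1}(U)$ meets $X_b$, so $f_b$ is an isomorphism over $Y_b\cap U$. The substantive point is to show that $Y_b$ is an ample divisor in the translate $K_y:=\pi_B^{-1}(y_B)$ of $K$, i.e.\ that $\dim Y_b=i$ and that $Y_b$ is ample. To compute the dimension I set $g:=\pi_B\circ a\colon X\to B$, whose general fiber has dimension $\dim X_b$. For generic $R\in\widehat B=\Pic^0 B$ the bundle $Q:=P\otimes\pi_B^*R$ lies on $P+\PB\subseteq V^i(\omega_X,a)$, so, as $a^*Q=a^*P\otimes g^*R$, we get $H^i(X,\omega_X\otimes a^*P\otimes g^*R)\neq0$. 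As the direct images $R^qg_*(\omega_X\otimes a^*P)$ are \emph{GV}-sheaves on $B$ (generic vanishing for direct images, the torsion twist $a^*P$ being harmless after an \'etale cover), for generic $R$ all higher cohomology on $B$ vanishes and the Leray spectral sequence collapses to $H^i(X,\,\cdot\,)=H^0\!\big(B,R^ig_*(\omega_X\otimes a^*P)\otimes R\big)$; its nonvanishing forces $R^ig_*(\omega_X\otimes a^*P)\neq0$, hence $i\le\dim X_b$. Conversely $Y_b\subseteq K_y$ gives $\dim Y_b\le\dim K=i+1$, and the value $i+1$ is excluded: it would force $Y_b=K_y$, a translate of the positive-dimensional abelian variety $K$, whence $Y+K=Y$, contradicting that $Y$ is of general type (Lemma~\ref{lem:hypo3}). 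So $\dim Y_b=i$ and $Y_b$ is a divisor in $K_y$.

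For ampleness I would argue by contradiction: if $Y_b$ were not ample in $K_y$, its stabilizer $K':=\mathrm{Stab}^0(Y_b)\subseteq K$ would be positive-dimensional and $Y_b$ invariant under translation by $K'$; for general $b$ this $K'$ is constant, so $Y+K'=Y$, once more contradicting that $Y$ is of general type via Lemma~\ref{lem:hypo3}. Hence $Y_b$ is an ample divisor, as claimed. The step I expect to be the main obstacle is the lower bound $i\le\dim X_b$, since it is the only place where one genuinely uses that $P+\PB$ is a component of $V^i$ rather than of some $V^{<i}$; it rests on the generic vanishing of the higher direct images $R^qg_*(\omega_X\otimes a^*P)$ and on correctly absorbing the torsion twist $a^*P$. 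Once the fiber dimension is pinned down, both the divisor and the ampleness assertions follow formally from $Y$ being of general type through Lemma~\ref{lem:hypo3}.
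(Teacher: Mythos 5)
Your proof is correct and follows essentially the same route as the paper's (which is only a few lines long): birationality of $f_b$ from that of $f$, the fiber-dimension bound $\dim X_b\geq i$, and then Lemma~\ref{lem:hypo3} to rule out both $\dim Y_b=i+1$ and a positive-dimensional stabilizer of $Y_b$ in $\ker\pi_B$. The only divergence is that the paper obtains $\dim X-\dim X_B\geq i$ by citing \cite[Thm.~0.2]{GL2} directly, whereas you rederive it via Koll\'ar splitting and generic vanishing for the higher direct images $R^qg_*(\omega_X\otimes a^*P)$ --- a valid substitute that the paper itself employs elsewhere (e.g.\ in Step 2 of the proof of Lemma~\ref{lem:pass0}).
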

\begin{proof}
The first assertion is clear from the description.
Since $f$ is birational, $f_b$ is also birational.
Since $P+\PB$ is an irreducible component of $V^i(\omega_X,a)$, by \cite[Thm.~0.2]{GL2}, $\dim X-\dim X_B\geq i$.
Moreover, $Y$ is not fibred by abelian subvarieties (see Lemma \ref{lem:hypo3}), so $\dim X_B=\dim X-i$ and $Y_b$ is an ample divisor in a translate of the kernel of $\pi_B$.
\end{proof}

\bigskip

\subsection{Divisors in abelian varieties}\label{ssec:divisorcase}
If we assume that $Y$ is an ample divisor (in particular, conditions in Lemma \ref{lem:hypo3} are satisfied), then we show that $V^i(\omega_X,a)$ are ``small'' when $Y$ is smooth in codimension $1$.

\begin{lemm}\label{lem:divisorcase}
	Let $X\stackrel{f}\to Y$ be a desingularization of an irreducible ample divisor $Y\subseteq A$ of an abelian variety and denote $a: X\xrightarrow{f} Y\hookrightarrow A$.
	If $V^i(\omega_X,a)$ has a component $T=P+\PB$ of codimension $i+1$ in $\PA$ for some $0<i<\dim X$, then $Y$ is not smooth in codimension $1$.
\end{lemm}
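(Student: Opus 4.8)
The plan is to run the argument through the fibration attached to the component $T=P+\PB$ in Setting~\ref{set:c1}, and to derive a contradiction between the reducibility of a general fibre of $Y\to B$ and the regularity of $Y$ in codimension~$1$. Since $Y$ is a divisor in the smooth variety $A$ it is a hypersurface, hence Cohen--Macaulay, so ``smooth in codimension~$1$'' just means $\codim_Y\operatorname{Sing}(Y)\ge2$, i.e. $\dim\operatorname{Sing}(Y)\le\dim Y-2$. By Lemma~\ref{lem:propsett1} (which applies because $f$ is birational and $Y$, being ample, is of general type by Lemma~\ref{lem:hypo3}) the general fibre $Y_b=f(X_b)$ is an irreducible ample divisor in a translate of $K:=\ker\pi_B$, with $\dim K=i+1$ and $\dim Y_b=i$; moreover $\dim Y_B=\dim X_B=\dim B$, so $\pi_B|_Y\colon Y\to B$ is surjective with $i$-dimensional fibres, and the general fibre $F=Y\cap\pi_B^{-1}(b)$ is a divisor in the $(i+1)$-dimensional abelian variety $\pi_B^{-1}(b)$.

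The decisive step is to show that $e:=\deg f_B\ge2$, i.e. that $F$ is reducible. The fibration $\beta\colon X\to X_B$ has $i$-dimensional connected fibres, so Koll\'ar's decomposition gives $R^i\beta_*\omega_X\cong\omega_{X_B}$, and for a general character $Q\in\PB$ the Leray spectral sequence degenerates into
\[
H^i\bigl(X,\omega_X\otimes a^*(P\otimes\pi_B^*Q)\bigr)\cong\bigoplus_{p+q=i}H^{p}\bigl(X_B,R^q\beta_*(\omega_X\otimes a^*P)\otimes a_B^*Q\bigr).
\]
Each sheaf $a_{B*}R^q\beta_*\omega_X$ is a $GV$-sheaf on $B$, so for $q<i$ its locus $V^{i-q}$ has positive codimension in $\PB$ and misses the general $Q$; thus the nonvanishing that makes $T$ a component of $V^i(\omega_X,a)$ must come from the top summand $H^0(X_B,\omega_{X_B}\otimes a_B^*(P\otimes Q))$. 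Since $\dim T=\dim\PB$, this forces $V^0(\omega_{X_B},a_B)$ to fill up $P+\PB$, whence $\chi(X_B,\omega_{X_B})>0$. But $a_B=f_B$ is generically finite onto the abelian variety $B$, so if $e=1$ then $X_B$ is birational to $B$ and $\chi(X_B,\omega_{X_B})=0$; therefore $e\ge2$.

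Granting $e\ge2$, the general fibre $F$ contains at least two distinct irreducible components $Y_{b,1},Y_{b,2}$, each (by Lemma~\ref{lem:propsett1} applied to the separate sheets of $f_B$) an ample divisor in the abelian variety $\pi_B^{-1}(b)$ of dimension $i+1\ge2$. Two distinct ample divisors in such an abelian variety always meet, in a locus of dimension $\ge i-1$. On the other hand, at a point of $Y_{b,1}\cap Y_{b,2}$ the fibre $F$ has two branches, so if $Y$ were smooth there then $\pi_B^{-1}(b)$ would fail to meet $Y$ transversally; by generic smoothness of $\pi_B$ on the smooth locus of $Y$ over a general $b$, this cannot happen at a smooth point of $Y$, and hence $Y_{b,1}\cap Y_{b,2}\subseteq\operatorname{Sing}(Y)\cap\pi_B^{-1}(b)$. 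If $Y$ were smooth in codimension~$1$ the right-hand side would have dimension $\le\dim\operatorname{Sing}(Y)-\dim B\le i-2$ for general $b$, contradicting $\dim(Y_{b,1}\cap Y_{b,2})\ge i-1$. Hence $Y$ is not smooth in codimension~$1$.

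I expect the main obstacle to be the cohomological step forcing $e\ge2$: one must justify Koll\'ar's decomposition together with the $GV$-vanishing of the lower direct images $R^q\beta_*\omega_X$ for a general character, and --- most delicately --- deal with the torsion translate $P$, reducing to the case $P\in\PB$ so that $a^*P$ is pulled back from $X_B$ and commutes with $\beta_*$. The remaining, geometric steps (the forced intersection of the ample components of $F$ and the dimension count against $\operatorname{Sing}(Y)$) are then routine.
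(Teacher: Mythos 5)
Your argument is correct in outline but takes a genuinely different route from the paper's. The paper proves this lemma sheaf-theoretically: it invokes the Ein--Lazarsfeld adjoint-ideal sequence $0\to\cO_A\to\cI_Z\otimes\cO_A(Y)\to a_*\omega_X\to 0$ (with $Z$ co-supported on $\operatorname{Sing}(Y)$), pushes it forward along $p:A\to B$, and deduces from the non-vanishing of $R^ip_*(a_*\omega_X\otimes P)$ that a component of $Z$ dominates $B$ with fibres of dimension $\geq i-1$, whence $\dim \operatorname{Sing}(Y)\geq\dim Y-1$. You instead argue through the Stein factorization $X\to X_B\to B$: you show the finite part has degree $e\geq 2$, so a general fibre of $Y\to B$ has two components which are ample divisors in the $(i+1)$-dimensional abelian variety $K_b$ and therefore meet in dimension $i-1$ inside $\operatorname{Sing}(Y)$. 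This is essentially the mechanism the paper deploys later, in Steps 1 and 3 of the proof of Lemma \ref{lem:pass0}; your observation that $e=1$ would make $X_B$ birational to the abelian variety $B$, forcing $\chi(X_B,\omega_{X_B})=0$ against the positivity extracted from the Leray--Koll\'ar computation, is a nice shortcut special to the divisor case. Both proofs end by exhibiting an $(i-1)$-dimensional piece of $\operatorname{Sing}(Y)$ over a general point of $B$; yours trades the adjoint ideal for intersection theory of ample divisors plus generic smoothness, and that final geometric step is justified correctly.

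The one real gap is the one you flag yourself: the reduction to $P\in\PB$. Without it the top Leray summand is $H^0(X_B,R^i\beta_*(\omega_X\otimes a^*P)\otimes a_B^*Q)$, you cannot identify $R^i\beta_*(\omega_X\otimes a^*P)$ with a numerically trivial twist of $\omega_{X_B}$, and so neither $\chi(X_B,\omega_{X_B})>0$ nor the \emph{unconditional} claim $e\geq 2$ is established. The fix is to fold this step into the contradiction hypothesis, exactly as the paper does in Step 2 of Lemma \ref{lem:pass0}: assume $Y$ is smooth in codimension $1$. Then for general $b$ every fibre component $Y_{b,j}$ is an irreducible, smooth-in-codimension-$1$ ample divisor of $K_b$, since its singular locus and its intersections with the other components all lie in $\operatorname{Sing}(F)\subseteq\operatorname{Sing}(Y)\cap K_b$, which has dimension $\leq i-2$. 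Now $R^i\beta_*(\omega_X\otimes a^*P)\neq 0$ forces $\res{a^*P}{X_b}\cong\cO_{X_b}$, so Lemma \ref{lem:primitive} applied to $Y_b\subset K_b$ gives $\res{P}{K_b}\cong\cO_{K_b}$, hence $P\in\PB$ via $0\to\PB\to\PA\to\PK\to 0$. If $e\geq 2$ you already have the geometric contradiction (which never used $P$); if $e=1$ the now-legitimate computation $\chi(X_B,\omega_{X_B})>0$ contradicts $X_B$ being birational to $B$. Reordered this way, your proof closes.
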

The proof is contained in the proof of \cite[Lem.~6]{DH}.
For reader's convenience, we present it here.

\begin{proof}
By \cite[Prop.~3.1]{el}, there exists an adjoint ideal $\cI_Z\subset \cO_A$, co-supported at the singular locus of $Y$, which sits in the following exact sequence
\begin{equation*}0\to \cO_A\to \cI_Z\otimes\cO_A(Y)\to a_*\omega_X\to 0.\end{equation*}

Let $p: A\to B$ be the natural projection.
We then have the long exact sequence
\begin{equation*} R^{i}p_*P\to R^ip_*(\cI_Z\otimes\cO_A(Y))\to R^ip_*(a_*\omega_X\otimes P)\xrightarrow{\delta} R^{i+1}p_*P\end{equation*}

Since $P+\PB\subset V^i(\omega_X,a)$, the coherent sheaf $R^ip_*(a_*\omega_X\otimes P)$ is non-trivial and $V^0(R^ip_*(a_*\omega_X\otimes P))=\PB$.
Moreover, Koll\'{a}r's theorem \cite[Thm.~3.4]{ko2} implies that $R^ip_*(a_*\omega_X\otimes P)$ is torsion-free on $B$.
Since $R^{i+1}p_*P$ is a flat line bundle on $B$, the boundary map $\delta$ is not injective.
Thus the support of $R^ip_*(\cI_Z\otimes\cO_A(Y))$ is $B$.

Now, we consider the short exact sequence
\begin{equation*}0\to \cI_Z\otimes\cO_A(Y)\to \cO_A(Y)\to \cO_Z(Y)\to 0.\end{equation*}
Since $R^{i}p_*(\cO_A(Y)\otimes P)=0$, from the long exact sequence
\begin{equation*}R^{i-1}p_*(\cO_Z(Y)\otimes P)\to R^ip_*(\cI_Z\otimes\cO_A(Y)\otimes P)\to R^{i}p_*(\cO_A(Y)\otimes P)\end{equation*}
and the previous discussion, we get that the support of $R^{i-1}p_*(\cO_Z(Y)\otimes P)$ is $B$.
Thus, there exists an irreducible component $Z'$ of $Z$ such that $p|_{Z'}: Z'\to B$ is surjective and a general fiber of $p|_{Z'}$ is of dimension $\geq i-1$.
As a consequence, $\dim Z\geq \dim B+i-1=\dim X-1$.

Since $Z$ is contained in the singular locus of $Y$, we conclude that $Y$ is not smooth in codimension $1$.
\end{proof}

As we have seen, the hypothesis that $Y$ is smooth in codimension $1$ arises naturally when $Y$ is a divisor and we want to control the size of $V^i(\omega_X,a)$.
Moreover, as we will see in the following lemma, it implies that $A_X\to A$ is a {\em primitive morphism} of abelian varieties.

\begin{lemm}\label{lem:primitive}
	Let $X\stackrel{f}\to Y$ be a desingularization of a subvariety $Y\subseteq A$ of an abelian variety and denote by $a: X\xrightarrow{f} Y\hookrightarrow A$ the induced morphism to $A$.
	Assume that $Y$ is a smooth in codimension $1$ ample divisor of $A$.
	Then, given $P\in \PA$, $a^*P\cong \cO_X$ implies $P=\cO_A$.
\end{lemm}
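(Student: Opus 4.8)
The plan is to descend the triviality of $a^*P$ from $X$ down to $Y$, and then to obstruct a nontrivial restriction $P|_Y$ by a vanishing argument on $A$. First I would observe that $Y$ is in fact \emph{normal}: being an ample divisor in the smooth variety $A$, it is locally cut out by a single equation, hence a local complete intersection, hence Cohen--Macaulay and in particular $S_2$; since $Y$ is by hypothesis smooth in codimension $1$, i.e. $R_1$, Serre's criterion gives normality. (We may assume $\dim A\geq 2$, since otherwise $Y$ would be a point and could not generate $A$.)

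Granting normality, the desingularization $f\colon X\to Y$ satisfies $f_*\cO_X=\cO_Y$ (via Stein factorization, using that a finite birational morphism onto a normal variety is an isomorphism). Writing $a=\iota\circ f$ with $\iota\colon Y\hookrightarrow A$ the inclusion, the projection formula then gives $f_*(a^*P)=f_*f^*(\iota^*P)=\iota^*P\otimes f_*\cO_X=P|_Y$. Since $a^*P\cong\cO_X$ by assumption, this yields $P|_Y\cong f_*\cO_X=\cO_Y$. Thus the statement reduces to the injectivity of the restriction map $\PA=\Pic^0 A\to\Pic Y$.

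For the final step, suppose $P\neq\cO_A$ and tensor the ideal-sheaf sequence $0\to\cO_A(-Y)\to\cO_A\to\cO_Y\to 0$ by $P$ to obtain $0\to\cO_A(-Y)\otimes P\to P\to P|_Y\to 0$, where $P|_Y\cong\cO_Y$. A nontrivial $P\in\Pic^0 A$ satisfies $H^i(A,P)=0$ for all $i$, so the long exact sequence produces an isomorphism $H^0(Y,\cO_Y)\cong H^1(A,\cO_A(-Y)\otimes P)$. The left side is $1$-dimensional because $Y$ is integral and projective, while the right side vanishes: the line bundle $\cO_A(-Y)\otimes P$ is nondegenerate of index $g=\dim A$, its first Chern class being that of the anti-ample $\cO_A(-Y)$ and unaffected by the algebraically trivial twist $P$, so by Mumford's index theorem its cohomology is concentrated in degree $g$, and $H^1=0$ once $g\geq 2$. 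This contradiction forces $P=\cO_A$.

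I expect the conceptual crux to be the normality step rather than the cohomology: recognizing that the codimension-$1$ smoothness hypothesis is precisely what upgrades $Y$ to a normal variety (through $R_1+S_2$) is what allows triviality to descend along $f$ as $f_*\cO_X=\cO_Y$. Once $P|_Y\cong\cO_Y$ is secured, the Lefschetz-type contradiction via the index theorem is routine.
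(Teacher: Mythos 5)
Your proof is correct, but it takes a genuinely different route from the paper's. The paper argues by contradiction through the cyclic \'etale cover $\pi:\widetilde A\to A$ attached to $P$: since $Y$ is a normal ample divisor, $\pi^{-1}(Y)$ is irreducible (normal because $\pi$ is \'etale, connected because $Y$ is ample), so the factorization $X\to\pi^{-1}(Y)\to Y$ forces $\deg f=\deg\pi>1$, contradicting birationality of $f$. You instead descend the trivialization to $Y$ itself via $f_*\cO_X=\cO_Y$ and the projection formula, and then prove injectivity of the restriction $\Pic^0A\to\Pic Y$ by twisting the ideal-sheaf sequence and applying Mumford's index theorem to the anti-ample bundle $\cO_A(-Y)\otimes P$. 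Both arguments hinge on exactly the same use of the hypothesis — Serre's criterion ($R_1$ plus $S_2$ for a divisor in a smooth variety) upgrading $Y$ to normal — so you have correctly identified the crux. What each buys: your version treats an arbitrary $P\in\Pic^0A$ uniformly, whereas the paper's \'etale-cover construction strictly speaking requires $P$ to be torsion (one must first observe that the kernel of $a^*$ on $\Pic^0$ is a closed subgroup, hence contains nontrivial torsion if nontrivial — a point the paper leaves implicit); on the other hand, the paper's argument is shorter, purely geometric, and its key mechanism (irreducibility of preimages of normal ample divisors under isogenies) is reused almost verbatim in the proof of Theorem A, which is presumably why the authors chose it. Your closing cohomological step could be marginally streamlined by Serre duality plus Mumford vanishing for the ample bundle $\cO_A(Y)\otimes P^{-1}$, but the index-theorem formulation is equally valid.
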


\begin{proof}
	We argue by contradiction.
	Suppose that there exists a non-trivial $P\in \PA$ such that $a^*P\cong \cO_X$.
	Then $a: X\to A$ factors through a non-trivial \'etale cover $\pi: \widetilde{A}\to A$ induced by $P$ and $f$ factors through $X\to \pi^{-1}(Y)\to Y$.
	Since $Y$ is smooth in codimension $1$ and it is an ample divisor of $Y$, $\pi^{-1}(Y)$ is irreducible.
	This is a contradiction, since $f$ is birational, but $\pi^{-1}(Y)\to Y$ is not.
\end{proof}

The following corollary will be the ``divisor case'' in the next section and it will play the role of ``building brick''.
It is obtained by combining Theorem \ref{thm:pareschi} and Lemmas \ref{lem:divisorcase} and \ref{lem:primitive}.

\begin{coro}\label{cor:divisorcase}
	Let $Y$ be a smooth in codimension 1 divisor of an abelian variety $A$.
	Let $X$ be a desingularization of $Y$. Then, $Y$ is a theta divisor of $A$ if, and only if, $\chi(X,\omega_X)=1$.
\end{coro}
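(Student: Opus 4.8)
The statement to prove is Corollary~\ref{cor:divisorcase}: for a smooth-in-codimension-$1$ divisor $Y$ of an abelian variety $A$ with desingularization $X\stackrel{f}\to Y$, one has that $Y$ is a theta divisor of $A$ if and only if $\chi(X,\omega_X)=1$. The forward implication is essentially the content of the introduction: if $Y=\Theta$ is a theta divisor, then by Ein--Lazarsfeld \cite{el} it is normal with rational singularities, so $\chi(X,\omega_X)=\chi(\Theta,\omega_\Theta)=h^0(A,\cO_A(\Theta))=1$. The substance is therefore the converse, and the plan is to assemble the three results that immediately precede the corollary.

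\emph{Setting up the converse.} Assume $\chi(X,\omega_X)=1$. Since $Y$ is an ample divisor, it is not fibred by positive-dimensional abelian subvarieties (ampleness forbids containing a translate of a nontrivial abelian subvariety), so by Lemma~\ref{lem:hypo3} the variety $Y$ is of general type. By Lemma~\ref{lem:sof}, the morphism $a:X\to A$ induces a birational equivalence between $X$ and $Y$; in particular $f:X\to Y$ is birational, which is exactly the hypothesis needed to run Lemma~\ref{lem:primitive}. The goal is to deduce from the smooth-in-codimension-$1$ hypothesis that $V^i(\omega_X,a)$ has \emph{no} component of codimension $i+1$ for any $0<i<\dim X$, and then invoke Theorem~\ref{thm:pareschi}.

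\emph{Killing the codimension-$(i+1)$ components.} This is where Lemma~\ref{lem:divisorcase} does the work: since $Y$ is a smooth-in-codimension-$1$ ample divisor, that lemma asserts precisely that $V^i(\omega_X,a)$ cannot have a component $T=P+\PB$ of codimension $i+1$ for any $0<i<\dim X$ (otherwise $Y$ would be singular in codimension~$1$, contradicting the hypothesis). Thus the vanishing hypothesis of Theorem~\ref{thm:pareschi} is satisfied, and we conclude that $Y$ is birational to a theta divisor and that the induced map $t:A_X\to A$ is an isogeny. It remains to upgrade ``birational to a theta divisor'' into ``\emph{is} a theta divisor of $A$'', and here Lemma~\ref{lem:primitive} enters: since $t:A_X\to A$ is an isogeny with $a^*P\cong\cO_X\Rightarrow P=\cO_A$ forcing $t$ to be injective on $\PA$, the isogeny $t$ is in fact an isomorphism, so $A_X\cong A$ and the theta divisor structure descends to $A$ itself, identifying $Y$ as a genuine theta divisor of~$A$ (rather than merely the image of one under an isogeny, cf.\ Remark~\ref{rem:notppav_divcase}).

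\emph{The main obstacle.} The conceptually delicate point is the last step: Theorem~\ref{thm:pareschi} only yields that $Y$ is birational to \emph{some} theta divisor $\Theta\subset A_\Theta$, with $t:A_X\to A$ an isogeny, and Remark~\ref{rem:notppav_divcase} warns that $A$ need not be principally polarized a priori. The whole force of the smooth-in-codimension-$1$ assumption is felt here twice — first to suppress the bad cohomology loci via Lemma~\ref{lem:divisorcase}, and second to guarantee via Lemma~\ref{lem:primitive} that no nontrivial line bundle pulls back trivially, which is exactly what rules out $t$ being a strictly positive-degree isogeny. Combining these forces $t$ to be an isomorphism, so that the principal polarization on $A_X$ given by (the pullback of) $\Theta$ transports to $A$, and $Y$ is literally a theta divisor of $A$, completing the proof.
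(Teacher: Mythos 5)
Your proof is correct and follows essentially the same route as the paper's: establish that $Y$ is of general type, use Lemma \ref{lem:divisorcase} to exclude codimension-$(i+1)$ components of $V^i(\omega_X,a)$, invoke Theorem \ref{thm:pareschi} to get that $Y$ is birational to a theta divisor with $t:A_X\to A$ an isogeny, and finish with Lemma \ref{lem:primitive} to force $\deg t=1$. One small inversion to fix: ampleness of $Y$ is not among the hypotheses and must be \emph{deduced} from $\chi(X,\omega_X)=1$ (which gives that $X$, hence $Y$, is of general type by Remark \ref{rem:chi>0-Xgt}(i), hence $Y$ is not stabilized by any positive-dimensional abelian subvariety, hence is ample as a divisor generating $A$), rather than assumed and used to deduce general type; also, the appeal to Lemma \ref{lem:sof} is superfluous, since $f$ is birational by the very definition of a desingularization.
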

\begin{proof}
	Ein--Lazarsfeld prove in \cite[Thm.~1]{el} that irreducible theta divisors are normal and have only rational singularities.
	Hence, if $Y$ is isomorphic to a theta divisor, in particular, it is smooth in codimension $1$.

	Now assume that $\chi(X,\omega_X)=1$ and note that $X$ is of general type by Remark \ref{rem:chi>0-Xgt}{\it(i)}, and so is also $Y$.
	By Lemma \ref{lem:divisorcase}, for all $0<i<\dim X$, $V^i(\omega_X,a)$ has no component of codimension $i+1$.
	Therefore, Theorem \ref{thm:pareschi} implies that $Y$ is birational to a theta divisor $\Theta$ of $A_X$ and $t: A_X\to A$ is an isogeny.
 By Lemma \ref{lem:primitive}, we have $\deg t=1$.
\end{proof}

\bigskip

\subsection{Subvarieties smooth in codimension 1}\label{ssec:sub-sm-1}
In this section, we identify products of theta divisors among all normal subvarieties of a given abelian variety (see Theorem \ref{thmA}).
We will deduce this characterization from the following theorem.

\begin{theo}\label{thm:sm-in-codim1-sav}
Let $Y$ be a smooth in codimension 1 subvariety of an abelian variety $A$.
Let $X$ be a desingularization of $Y$. Then, $Y$ is birational to a product of theta divisors if, and only if, $\chi(X,\omega_X)=1$.
 \end{theo}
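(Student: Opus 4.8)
The plan is to prove Theorem~\ref{thm:sm-in-codim1-sav} by induction on the number of irreducible components of $\Vz^1(\omega_X, a)$, using Corollary~\ref{cor:divisorcase} as the base case and the fibration structure coming from a codimension-$(i+1)$ component of $V^i(\omega_X, a)$ to reduce the number of components. The ``only if'' direction is immediate: if $Y\cong \Theta_1\times\cdots\times\Theta_m$, then each $\Theta_j$ is normal with rational singularities by \cite[Thm.~1]{el}, hence $Y$ is smooth in codimension $1$, and $\chi(X,\omega_X)=\prod_j\chi(\Theta_j,\omega_{\Theta_j})=1$. So I concentrate on the ``if'' direction, assuming $\chi(X,\omega_X)=1$.

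First I would dispose of the trivial case: if $V^i(\omega_X,a)$ has \emph{no} component of codimension $i+1$ for any $0<i<\dim X$, then Theorem~\ref{thm:pareschi} says $Y$ is already birational to a single theta divisor, and we are done. Otherwise, pick such a component $T=P+\PB$ of minimal codimension $i+1$ and place ourselves in Setting~\ref{set:c1}, producing the fibration $\beta: X\to X_B$ with general fiber $X_b$ mapping to $Y_b\subset \ker\pi_B$. Since $Y$ is smooth in codimension $1$ and of general type (Remark~\ref{rem:chi>0-Xgt}{\it(i)} via $\chi=1$), I want to show $Y_b$ is a theta divisor of (a translate of) $\ker\pi_B$. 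By Lemma~\ref{lem:propsett1}{\it(ii)}, $f_b: X_b\to Y_b$ is birational and $Y_b$ is an irreducible ample divisor in $\ker\pi_B$; the key point is that $Y_b$ inherits smoothness in codimension $1$ and has $\chi(X_b,\omega_{X_b})=1$, so Corollary~\ref{cor:divisorcase} applies to make $Y_b\cong\Theta$. Establishing $\chi(X_b,\omega_{X_b})=1$ should follow from the fibration together with $\chi(X,\omega_X)=1$ and the M-regularity/generic-vanishing package.

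Next I would apply Proposition~\ref{prop:fibertheta} to the fibration $f: X\to X_B$: its general fiber is birational to the theta divisor $\Theta=Y_b$, and I must verify the numerical hypothesis $q(X)-q(X_B)>\dim X-\dim X_B$. Here $\dim X-\dim X_B = i$ by Lemma~\ref{lem:propsett1}, while the codimension-$(i+1)$ of $T$ together with the interpretation of $\PB$ should force $q(X)-q(X_B)\geq i+1$; this strict inequality is exactly what is needed. Proposition~\ref{prop:fibertheta} then gives that $X$ is birational to $\Theta\times X_B$, equivalently $Y$ is birational to $Y_b\times Y_B$. The inductive step is then to check that $Y_B=\pi_B(Y)$ is again a smooth-in-codimension-$1$ subvariety of $B$ with $\chi$ of its desingularization equal to $1$ (using the product structure and $\chi(Y_b)=1$), and crucially that $\Vz^1(\omega_{X_B})$ has strictly fewer components than $\Vz^1(\omega_X)$, so the induction terminates with $Y$ birational to a product of theta divisors.

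The main obstacle I anticipate is twofold. First, controlling the components of $\Vz^1$ under the projection $\pi_B$: I must show that splitting off the factor $Y_b$ genuinely removes at least one component of $\Vz^1(\omega_X)$ and does not create new ones on $Y_B$, which requires understanding how $V^i(\omega_X,a)$ decomposes under the product structure $A\sim \ker\pi_B\times B$ (a K\"unneth-type analysis of cohomological support loci for the product of theta divisors). Second, and more delicate, is verifying that smoothness in codimension $1$ descends to both $Y_b$ and $Y_B$: while $Y_b$ is a hyperplane-type slice and $Y_B$ a projection, neither operation obviously preserves the codimension-$1$ smoothness of the singular locus, and I would likely need to combine the birationality statements above (so that $Y$ being birational to $Y_b\times Y_B$ with $Y_b$ a \emph{normal} theta divisor forces the factors to be well-behaved) rather than argue purely geometrically on $Y$. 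Getting the bookkeeping of $\chi=1$ and of the $\Vz^1$-components to run cleanly through the induction is where the real work lies.
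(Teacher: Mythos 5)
Your outline coincides with the paper's strategy (induction on the components of $\Vz^1$, splitting off theta-divisor fibers via Corollary~\ref{cor:divisorcase} and Proposition~\ref{prop:fibertheta}), but the two ``anticipated obstacles'' you name at the end are not side issues: they are the substance of the proof, and your plan as written does not close them. First, $\chi(X_b,\omega_{X_b})=1$ is not a routine consequence of the generic-vanishing package. The paper proves it in Propositions~\ref{prop:base=1} and~\ref{prop:fiber=1}: the derivative complex restricted to a transversal $\mathbb{P}(W)$ gives $h^0(X_B,\beta_*\omega_X\otimes a_B^*Q)=\binom{j}{i}$ and $h^0(X_B,R^i\beta_*\omega_X\otimes a_B^*Q)=\binom{j-1}{i}$ for some unknown $j\geq i+1$, and one needs the nontrivial injection $\omega_{X_B}^{\oplus(i+1)}\hookrightarrow\beta_*\omega_X$ (built from wedge powers of the cotangent sequence, plus an \'etale-cover argument when $P\neq 0$) to force $j=i+1$. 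Moreover, Proposition~\ref{prop:fiber=1} requires $Y_B$ to be of general type, and this is precisely where the smooth-in-codimension-$1$ hypothesis enters: Lemma~\ref{lem:pass0} shows that if $f_B\colon X_B\to Y_B$ were not birational, two distinct fibers $Y_{b_1},Y_{b_2}$ would be ample divisors in the same translate of $\ker\pi_B$ meeting in codimension $1$ inside $Y$, contradicting the hypothesis. Your sketch never secures that $Y_B$ is of general type (nor that the component $P+\PB$ passes through $0$), so the chain of implications leading to $\chi(X_b,\omega_{X_b})=1$ and to the applicability of Lemma~\ref{lem:hypo3} is broken.

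Second, your inductive step --- replace $Y$ by $Y_B\subset B$ and verify the hypotheses again --- would indeed require $Y_B$ to be smooth in codimension $1$, and as you suspect there is no reason for this to hold under projection. The paper avoids the issue entirely: it never re-applies the theorem to $Y_B$. It fixes the components $\PPB_1,\dots,\PPB_k$ of $\Vz^1(\omega_X,a_X)$ at the outset and processes them one at a time, at each stage applying Proposition~\ref{prop:fibertheta} to a fibration of the original $X$ (using the product decomposition already obtained) and locating the next component through the K\"unneth-type identity $\Vz^1(\omega_X,a_X)=\bigl(\set{0}\times\PA_{X_{\widetilde{B}_1}}\bigr)\cup\bigl(\PK_1\times\Vz^1(\omega_{X_{\widetilde{B}_1}},a_{X_{\widetilde{B}_1}})\bigr)$, and so on. The smoothness hypothesis is only ever invoked on $Y$ itself, via Lemma~\ref{lem:pass0} and the fact that a \emph{general} fiber $Y_b$ of $Y\to Y_B$ inherits smoothness in codimension $1$. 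Finally, the termination of the process also needs an argument: one must show the residual factor $M_k$ is a point, which the paper does by observing that otherwise $\Vz^1(\omega_X,a_X)$ would acquire a $(k+1)$-st component. Until you either restructure the induction along these lines or prove that smoothness in codimension $1$ descends to $Y_B$ (which the paper does not do and which one should not expect in general), the inductive step does not go through.
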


The rest of this section is mainly devoted to the proof of this theorem.

By Theorem \ref{thm:pareschi}, we can reduce our study to the case where there exists a codimension-$(i+1)$ irreducible component of $V^i(\omega_X,a)$ for some $1\leq i\leq \dim X-1$.
That is, we can assume we are in Setting \ref{set:c1} and use the notation therein.

The following technical lemma shows that, under the condition that $Y$ is smooth in codimension $1$, we may assume in Setting \ref{set:c1} that $X_B$ and $Y_B$ are of general type and, in particular $P=0$, \ie\ the codimension-$(i+1)$ irreducible component passes through zero.

\begin{lemm}\label{lem:pass0}
	Let $A$ be an abelian variety.
	Let $X\stackrel{f}\to Y$ be a desingularization of a subvariety $Y\subseteq A$ smooth in codimension 1 and denote by $a: X\xrightarrow{f} Y\hookrightarrow A$ the induced morphism to $A$.
	Assume that $\chi(X,\omega_X)=1$.

	For any $i>0$, all codimension-$(i+1)$ irreducible components of $V^i(\omega_X,a)$ pass through zero.
	Moreover, using the notation in Setting \ref{set:c1}, if $\PB$ is an irreducible component of $V^i(\omega_X,a)$, then $X_B$ and $Y_B$ are birational and of general type.
\end{lemm}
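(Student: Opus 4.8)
The plan is to exploit the fibration $\beta\colon X\to X_B$ produced in Setting \ref{set:c1} and to reduce the statements about the base $X_B,Y_B$ to the divisor case on a general fibre. First a reduction: since $\chi(X,\omega_X)=1\neq 0$, the variety $Y$ cannot be fibred by tori, so by the dichotomy recalled just before Lemma \ref{lem:hypo3} (Ueno's theorem) $Y$ is of general type. Hence Lemma \ref{lem:propsett1}(ii) applies to any codimension-$(i+1)$ component $P+\PB$: the morphism $f_b\colon X_b\to Y_b$ is birational, and $Y_b$ is an irreducible ample divisor in a translate $K_y$ of $K:=\ker\pi_B$, with $\dim K=i+1$ and $\dim X_b=i$. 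Moreover $Y_b$ is again smooth in codimension $1$: as $\operatorname{Sing}Y$ has codimension $\geq 2$ in $Y$, a dimension count bounds $\dim(\operatorname{Sing}Y\cap Y_b)$ by $\dim X_b-2$ for general $b$, and generic smoothness of the restriction of $\pi_B$ to the smooth locus of $Y$ shows that $Y_b$ is smooth away from $\operatorname{Sing}Y$. Thus Lemmas \ref{lem:primitive} and \ref{lem:divisorcase} are available on $a_b\colon X_b\to K_y$.

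For the first assertion I would restrict to a general fibre. Let $P+\PB$ be a codimension-$(i+1)$ component and let $\bar P$ be the image of $P$ in $\PA/\PB$, so that $P_b:=a^*P|_{X_b}=a_b^*(P|_{K_y})$ is the fibre restriction. Since $\beta$ has relative dimension $i$, cohomology and base change identify the generic stalk of $R^i\beta_*(\omega_X\otimes a^*P)$ with $H^i(X_b,\omega_{X_b}\otimes P_b)\cong H^0(X_b,P_b^{-1})^\vee$ by Serre duality. The hypothesis $P+\PB\subseteq V^i(\omega_X,a)$ gives $H^i(X,\omega_X\otimes a^*P\otimes\beta^*a_B^*\hat b)\neq 0$ for generic $\hat b\in\PB$; as the lower direct images $a_{B*}R^j\beta_*(\omega_X\otimes a^*P)$ for $j<i$ are GV-sheaves on $B$, their loci $V^{i-j}$ are proper in $\PB$ and cannot produce this nonvanishing at a generic point, so the top direct image must be nonzero. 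Hence $P_b^{-1}\cong\cO_{X_b}$, and Lemma \ref{lem:primitive} (applicable since $Y_b$ is a smooth-in-codimension-$1$ ample divisor) forces $P|_{K_y}\cong\cO_{K_y}$, i.e. $\bar P=0$ and $P\in\PB$. This shows every codimension-$(i+1)$ component of $V^i(\omega_X,a)$ passes through $0$.

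For the moreover, assume $\PB$ itself is a component, and recall $R^i\beta_*\omega_X\cong\omega_{X_B}$ by Kollár. The same Leray computation with $P=0$ yields $H^0(X_B,\omega_{X_B}\otimes a_B^*\hat b)\neq0$ for generic $\hat b$, so $V^0(\omega_{X_B},a_B)=\PB$ and $\chi(X_B,\omega_{X_B})>0$. To obtain that $Y_B$ is of general type I would prove $a_{B*}\omega_{X_B}$ is M-regular and invoke Lemma \ref{lem:hypo3}. Kollár's splitting $R\beta_*\omega_X\cong\bigoplus_j R^j\beta_*\omega_X[-j]$ realizes $H^k(X_B,\omega_{X_B}\otimes a_B^*\hat b)$ as a direct summand of $H^{i+k}(X,\omega_X\otimes a^*\pi_B^*\hat b)$, whence $V^k(\omega_{X_B},a_B)\subseteq V^{i+k}(\omega_X,a)\cap\PB$. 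A failure of M-regularity would produce a component of $V^k(\omega_{X_B},a_B)$ of codimension exactly $k$ in $\PB$, that is, a codimension-$(i+k+1)$ subvariety of $V^{i+k}(\omega_X,a)$, and the component analysis of Lemma \ref{lem:V^1} on the general-type variety $X$ should exclude such a locus. Granting $Y_B$ of general type, one has $\chi(X_B,\omega_{X_B})\geq1$, and the Euler-characteristic inequality for a fibration over a base of general type, $\chi(X,\omega_X)\geq\chi(X_B,\omega_{X_B})\,\chi(X_b,\omega_{X_b})$ with $\chi(X_b,\omega_{X_b})\geq1$ (as $Y_b$ is of general type), forces $\chi(X_B,\omega_{X_B})=1$; Lemma \ref{lem:sof} then gives that $a_B$ is birational onto $Y_B$.

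The main obstacle is precisely the M-regularity of $a_{B*}\omega_{X_B}$, equivalently the general type of $Y_B$. The inclusion $V^k(\omega_{X_B},a_B)\subseteq V^{i+k}(\omega_X,a)\cap\PB$ combined with generic vanishing on $X$ only recovers the non-strict GV inequality for the base; upgrading it to strict M-regularity requires understanding exactly how the codimension-exactly-$(i+1)$ component $\PB$ meets the deeper loci $V^{i+k}(\omega_X,a)$, which is where the interaction between the components of $V^1(\omega_X,a)$ and of the $V^\ell(\omega_X,a)$ in Lemma \ref{lem:V^1} must be used with care. A secondary delicate point is pinning $\chi(X_B,\omega_{X_B})$ down to exactly $1$ rather than merely positive.
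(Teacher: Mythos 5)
Your argument for the first assertion is essentially the paper's: you degenerate the Leray spectral sequence via Koll\'ar, use that the lower direct images are $GV$-sheaves to isolate $R^i\beta_*(\omega_X\otimes a^*P)$, identify its generic stalk with $H^0\bigl(X_b,(a^*P)^{-1}|_{X_b}\bigr)^\vee$, and conclude $\res{P}{K_b}\cong\cO_{K_b}$ by Lemma \ref{lem:primitive}; the observation that $Y_b$ inherits smoothness in codimension $1$ is indeed needed there, and your dimension count for it is fine. The same computation with $P=0$ correctly gives $\chi(X_B,\omega_{X_B})>0$.

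The ``moreover'' part, however, has a genuine gap, and you have partly identified it yourself: you cannot establish that $Y_B$ is of general type, and your fallback (M-regularity of $a_{B*}\omega_{X_B}$ via Lemma \ref{lem:V^1}) cannot work, since codimension-$(k+1)$ components of $V^k$ are exactly what Setting \ref{set:c1} is built around, so Lemma \ref{lem:V^1} excludes nothing of the kind. The route you are missing is much more elementary and splits into two observations. First, birationality of $f_B\colon X_B\to Y_B$ is proved \emph{directly} from the hypothesis that $Y$ is smooth in codimension $1$: if $\deg f_B>1$, two distinct points $b_1,b_2\in f_B^{-1}(b)$ give two distinct irreducible components $Y_{b_1},Y_{b_2}$ of the same fibre of $Y\to Y_B$, each an ample divisor in the same translate of $K=\ker\pi_B$ by Lemma \ref{lem:propsett1}; they meet in codimension $1$ in each, and letting $b$ vary this sweeps out a codimension-$1$ singular locus of $Y$, a contradiction. (Note that this uses the smooth-in-codimension-$1$ hypothesis on the \emph{total space} $Y$, not on the fibres.) Second, once $\chi(X_B,\omega_{X_B})>0$ is known, Remark \ref{rem:chi>0-Xgt}\emph{(i)} (Campana, Chen--Hacon) already gives that $X_B$ is of general type, with no M-regularity needed; since $f_B$ is birational, $Y_B$ is of general type too. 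Your alternative ending --- pinning down $\chi(X_B,\omega_{X_B})=1$ via an inequality $\chi(X,\omega_X)\geq\chi(X_B,\omega_{X_B})\,\chi(X_b,\omega_{X_b})$ and then invoking Lemma \ref{lem:sof} --- is both unnecessary for this lemma (the equality $\chi(X_B,\omega_{X_B})=1$ is the separate Proposition \ref{prop:base=1}, proved with the derivative complex) and unjustified: that multiplicativity inequality is established nowhere in the paper, and your appeal to Lemma \ref{lem:sof} is in any case circular, since it presupposes the general type of $Y_B$ that you were trying to prove.
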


\begin{rema}\label{rem:V1pass0}
	Since $V^1(\omega_X,a)$ is non-empty, we can suppose that there always exists $P+\PB$ a codimension-$(i+1)$ irreducible component of $V^i(\omega_X,a)$ by Lemma \ref{lem:V^1}.

	In particular, we prove that all irreducible components of $V^1(\omega_X,a)$ pass through zero and the corresponding $X_B$ and $Y_B$ are of general type.
\end{rema}

\begin{proof}
	Let $P+\PB$ a codimension-$(i+1)$ irreducible component of $V^i(\omega_X,a)$.

	If $i=\dim X$, then $\dim B=0$ and $Y$ is an ample divisor of $A$.
	Then, we conclude by Lemma \ref{lem:primitive}.
	Hence, we assume now that $i<\dim X$ and we are in Setting \ref{set:c1} and we use the notation therein.

	\smallskip
	{\bf Step 1.} $f_B:X_B\to Y_B$ is birational.

	Assume by contradiction that $\deg f_B>1$.
	Then, consider two distinct preimages $b_1, b_2\in f_B^{-1}(b)$ and their corresponding fibres $X_{b_i}=\beta^{-1}(b_i)$.
	Recall that $\chi(X,\omega_X)>0$ implies that $X$ is of general type (see Remark \ref{rem:chi>0-Xgt}{\it(i)}), and so is $Y$.
	Then, by Lemma \ref{lem:propsett1}{\it (ii)}, $X_{b_i}$ are mapped birationally to $Y$ via $f$.
	Since $Y_{b_i}:=f(X_{b_i})$ are two different ample divisors in a translate of the kernel of $\pi_B$ (see again Lemma \ref{lem:propsett1}), their intersection has codimension $1$ in each divisor.
	This is in contradiction with $Y$ being smooth in codimension $1$.

	\smallskip

	{\bf Step 2.} $P\in \PB$, so we may assume $P=0$.

	By \cite[Thm.~2.2]{HP-g2}, $R^j\beta_*(\omega_X\otimes a^*P)$ are $GV$-sheaves for all $j\geq 0$.
	Then by \cite[Thm. 3.1]{ko2}, for $P_B\in \PB$ general $h^i(X,\omega_X\otimes a^*P \otimes a^*P_B)= h^0(Y, R^i\beta_*(\omega_X\otimes a^*P)\otimes a_{B}^*P_B)$.
	Since $P+\PB\subset V^i(\omega_X, a)$, $R^if_*(\omega_X\otimes a^*P)\neq 0$.
	This implies that $\res{(a^*P)}{X_b}\cong \cO_{X_b}$.

	Hence, if we consider $a_b: X_b\xrightarrow{f_b} Y_b\hookrightarrow K_b$, then $a_b^*(\res{P}{K_b})\cong \cO_{X_b}$.
	Since $Y_b$ is smooth in codimension $1$ and it is an ample divisor of $K_b$, then $\res{P}{K_b}\cong\cO_{K_b}$ by Lemma \ref{lem:primitive}.
	Since we have the exact sequence of abelian varieties
 \begin{equation*}
		0\to \PB\to\PA\to\PK\to 0,
 \end{equation*}
	$\res{P}{K_b}\cong\cO_{K_b}$ implies that $P\in\PB$.

	\smallskip

	{\bf Step 3.} $X_B$ and $Y_B$ are of general type.

	By Step 1, we only need to prove that $X_B$ is of general type.

	For $P_B\in \PB\setminus\bigcup_j V^1(R^j\beta_* \omega_X, a_{B})\neq \emptyset$ (e.g.~\cite[Thm.~2.2]{HP-g2}),
	\begin{align*}
	0&< h^i(X,\omega_X\otimes a^*P_B)=h^0(Y,R^i \beta_*\omega_X\otimes a_{B}^*P_B) \\
	&=h^0(X_B,\omega_{X_B} \otimes a_{B}^*P_B) & \text{{\cite[Prop.~7.6]{ko1}}}.
	\end{align*}
	By generic vanishing, $\chi(X_B,\omega_{X_B})=h^0(X_B,\omega_{X_B} \otimes a_{B}^*P_B)$ for a general $P_B\in \PB$.
	Hence $\chi({X_B},\omega_{X_B})>0$ and we conclude by Remark \ref{rem:chi>0-Xgt}{\it(i)}.
\end{proof}


We want to use the diagram \eqref{eqn:xy1} to prove Theorem \ref{thm:sm-in-codim1-sav} by reducing to the divisor case.
In order to do that, we need to prove that the fiber $Y_b$ satisfy the conditions of Corollary \ref{cor:divisorcase}.
Before, we need to study the base $Y_B$ and show, in particular, that $\chi(X_B,\omega_{X_B})=1$.

\begin{prop}\label{prop:base=1}
	Let $X\stackrel{f}\to Y$ be a desingularization of a subvariety $Y\subseteq A$ of an abelian variety and denote by $a: X\xrightarrow{f} Y\hookrightarrow A$ the induced morphism to $A$.
	Assume that $\chi(X,\omega_X)=1$ and $P+\PB$ is an irreducible component of $V^1(\omega_X,a)$ and let $i+1$ be the codimension of $\PB$ in $\PA$.

	Then, using notation as in Setting \ref{set:c1}, we have
	\begin{align*}
	\chi(X_B, R^i\beta_{*}(\omega_{X}\otimes a^*P))=1\qquad \text{ and }\qquad	\chi(X_B, \beta_*(\omega_X\otimes a^*P))=i+1.	
	\end{align*}
	 In particular, if $P+\PB$ passes through $0$, we may let $P=0$, and then $\chi(X_B, \omega_{X_B})=1$ and $\chi(X_B, \beta_*\omega_X)=i+1$.
	\end{prop}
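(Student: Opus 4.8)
The plan is to compute both Euler characteristics by relating the cohomology of $X$ to the cohomology of the base $X_B$ via the Leray spectral sequence for $\beta$, exploiting that the general fiber $X_b$ is birational to a theta divisor and hence contributes a single nonzero cohomology group in each degree with known dimensions. First I would note that, by Lemma \ref{lem:propsett1}{\it (ii)} and Lemma \ref{lem:pass0}, the general fiber $Y_b$ is an irreducible ample divisor in a translate $K_b$ of the kernel of $\pi_B$, and $f_b:X_b\to Y_b$ is birational; moreover $X_B$ and $Y_B$ are of general type. Since $\dim K=i+1$ and $Y_b$ is a divisor in $K_b$, the restriction $\res{(a^*P)}{X_b}$ is trivial (shown in Step 2 of Lemma \ref{lem:pass0}), so on a general fiber the twist disappears and $X_b$ genuinely behaves like a theta divisor of dimension $i$.

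The key computation is to determine the higher direct images $R^j\beta_*(\omega_X\otimes a^*P)$. Using Koll\'ar's theorem \cite[Thm.~3.1]{ko2} and generic base change, for general $P_B\in\PB$ one has $h^0(X_B, R^j\beta_*(\omega_X\otimes a^*P)\otimes a_B^*P_B)=h^j(X_b,\omega_{X_b})$, which for a theta divisor of dimension $i$ equals $\binom{i+1}{j+1}$ (this is exactly the Betti-type computation appearing in the proof of Theorem \ref{thm:pareschi} via the derivative complex and Hacon's criterion). In particular the top direct image $R^i\beta_*(\omega_X\otimes a^*P)$ is, by relative Grauert--Riemenschneider and \cite[Prop.~7.6]{ko1}, identified with $\omega_{X_B}$ (after replacing $P$ by $0$ when $P+\PB$ passes through $0$), which is why its Euler characteristic should be that of the base. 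Since $X_B$ and $Y_B$ are of general type, $\beta_*\omega_X$ (and each summand) is M-regular by Lemma \ref{lem:hypo3}, so the relevant cohomology support loci behave well and $\chi$ can be read off from $h^0$ after a general twist.

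I would then assemble these via the multiplicativity/additivity of Euler characteristics along the fibration: the Leray spectral sequence $H^p(X_B, R^j\beta_*(\omega_X\otimes a^*P)\otimes a_B^*P_B)\Rightarrow H^{p+j}(X,\omega_X\otimes a^*(P+P_B))$ degenerates (by \cite[Thm.~3.1]{ko2}), yielding
\begin{equation*}
\chi(X,\omega_X)=\sum_{j=0}^{i}(-1)^j\chi\bigl(X_B, R^j\beta_*(\omega_X\otimes a^*P)\bigr).
\end{equation*}
Combining $\chi(X,\omega_X)=1$ with the fiber dimensions $\binom{i+1}{j+1}$ and the general-twist identity $\chi(X_B,R^j\beta_*(\omega_X\otimes a^*P))=\binom{i+1}{j+1}\,\chi(X_B,R^i\beta_*(\omega_X\otimes a^*P))$ forces $\chi(X_B, R^i\beta_*(\omega_X\otimes a^*P))=1$, since the alternating sum of binomial coefficients $\sum_{j}(-1)^j\binom{i+1}{j+1}$ telescopes to $1$. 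The statement $\chi(X_B,\beta_*(\omega_X\otimes a^*P))=i+1$ then follows by reading off the $j=0$ term, where the fiber contribution is $h^0(X_b,\omega_{X_b})=\binom{i+1}{1}=i+1$.

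The main obstacle will be justifying that the fiber Euler characteristics genuinely factor as products $\binom{i+1}{j+1}\cdot\chi(X_B,\dots)$ rather than merely matching on $h^0$ after a general twist: this requires knowing that each $R^j\beta_*(\omega_X\otimes a^*P)$ is locally free (or at least that its Euler characteristic is computed by the generic fiber rank times $\chi$ of the base sheaf), which I expect to get from flatness of the relative canonical model over a large open set together with the isotriviality implicit in a fibration whose fibers are all the fixed theta divisor $\Theta$. The delicate point is that $\beta$ need not be smooth, so I would need the torsion-freeness and M-regularity from Lemma \ref{lem:hypo3}{\it (iv)} and Koll\'ar's decomposition to control the direct images globally, rather than only generically, before invoking additivity of $\chi$.
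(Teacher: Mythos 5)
Your argument is circular at its core: you take as input that the general fiber $X_b$ is birational to a theta divisor, with $h^j(X_b,\omega_{X_b})=\binom{i+1}{j+1}$, but at the stage of Proposition \ref{prop:base=1} nothing of the sort is known. In the paper's logic, Proposition \ref{prop:base=1} is precisely the tool that (via Proposition \ref{prop:fiber=1}) eventually produces $\chi(X_b,\omega_{X_b})=1$ and $q(X_b)=i+1$, and only afterwards, using the smooth-in-codimension-$1$ hypothesis and Corollary \ref{cor:divisorcase} (neither of which is assumed in Proposition \ref{prop:base=1}), does one conclude that $Y_b$ is a theta divisor. For the same reason you cannot appeal to Lemma \ref{lem:pass0} to get $X_B$ and $Y_B$ of general type: its proof needs $Y$ smooth in codimension $1$, which is not a hypothesis here. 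There is also an independent error: the identity you use, $h^0(X_B,R^j\beta_*(\omega_X\otimes a^*P)\otimes a_B^*P_B)=h^j(X_b,\omega_{X_b})$, is false. Koll\'ar's theorem and generic vanishing give $h^0(X_B,R^j\beta_*(\omega_X\otimes a^*P)\otimes a_B^*P_B)=h^j(X,\omega_X\otimes a^*P\otimes a^*P_B)$, a global quantity on $X$, whereas $h^j(X_b,\omega_{X_b})$ is only the generic \emph{rank} of $R^j\beta_*(\omega_X\otimes a^*P)$. Consequently the factorization $\chi(X_B,R^j\beta_*(\omega_X\otimes a^*P))=\binom{i+1}{j+1}\,\chi(X_B,R^i\beta_*(\omega_X\otimes a^*P))$ --- which you correctly flag as the main obstacle --- has no justification: local freeness or flatness would control ranks, not Euler characteristics, and the alternating-sum identity $\sum_j(-1)^j\chi(X_B,R^j\beta_*(\omega_X\otimes a^*P))=1$ alone cannot pin down the individual terms.

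The paper's actual proof is structured quite differently. First, restricting the derivative complex to a general $\mathbb{P}^i=\mathbb{P}(W)$ transversal to $\PB$ at a general $Q\in\PB$, exactness away from the leftmost term forces the kernel to be $\cO_{\mathbb{P}(W)}(-j)$ for some $j\geq i+1$, and a cohomology chase yields $\chi(X_B,R^i\beta_*(\omega_X\otimes a^*P))=\binom{j-1}{i}$ and $h^0(X_B,\beta_*(\omega_X\otimes a^*P)\otimes a_B^*Q)=\binom{j}{i}$, hence a ratio $j/(j-i)\leq i+1$ with equality exactly when $j=i+1$. Second --- and this is the step entirely absent from your proposal --- the reverse inequality comes from an explicit injection $R^i\beta_*(\omega_X\otimes a^*P)^{\oplus(i+1)}\hookrightarrow\beta_*(\omega_X\otimes a^*P)$, built from the $i$-th wedge of the relative cotangent sequence on the \'etale cover induced by $P$ together with \cite[Cor.~3.11]{mor}. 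The two bounds force $j=i+1$, and both Euler characteristics follow. Without a substitute for this second step your computation cannot close.
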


\begin{proof}
	We use the notation in Setting \ref{set:c1}, in particular, we will mainly be using the following small part of diagram \eqref{eqn:xy1}
	\begin{equation*}
		\xymatrix{
		X\ar[r]\ar[d]_{\beta} & A\ar[d]\\
		X_B\ar[r]_{a_{B}}\ar[r]& B.}
	\end{equation*}
	
	We first apply the argument that Hacon--Pardini use to prove \cite[Lem.~3.2]{HP-g2}.

	We consider $Q$ to be a general point of $\PB$ and let $W\subset H^1(A, \cO_A)$ be a linear subspace of dimension $i+1$ transversal to $\PB$ at $Q$.
	Since $P+\PB$ is an irreducible component of $V^1(\omega_X,a)$, by the same argument as in \cite[Proof of Thm.~3]{el}, the derivative complex is exact besides at the leftmost term.
	Hence, the derivative complex induces the following exact sequence of vector bundles on $\mathbb{P}^i=\mathbb{P}(W)$:
	\begin{align}\label{eqn:reldercomp}
	0&\to \cO_{\mathbb{P}(W)}(-j)\to H^0(X, \omega_X\otimes a^*P\otimes a^*Q)\otimes \cO_{\mathbb{P}(W)}(-i)\to\cdots \to\\
	&\to H^{i-1}(X, \omega_X\otimes a^*P\otimes a^*Q)
	\otimes \cO_{\mathbb{P}(W)}(-1)\to H^i(X, \omega_X\otimes a^*P\otimes a^*Q)
	\otimes \cO_{\mathbb{P}(W)}\to 0,\notag
	\end{align}
	for some $j\geq i+1$.

	We note that since $Q\in\PB$ is general, we have the natural isomorphism $H^k(X, \omega_X\otimes a^*P\otimes a^*Q)\cong H^0(X_B, R^k \beta_*(\omega_X\otimes a^*P)\otimes a_B^*Q)$ for any $k\geq 0$.
	Hence, the exact sequence \eqref{eqn:reldercomp} becomes
	\begin{align}
	\label{eqn:sequence1}0&\to \cO_{\mathbb{P}(W)}(-j)\to H^0(X_B, \beta_*(\omega_X\otimes a^*P)\otimes a_B^*Q)\otimes \cO_{\mathbb{P}(W)}(-i)
	 \to\cdots \to\\
	&\to H^0(X_B, R^{i-1}\beta_*(\omega_{X}\otimes a^*P)\otimes a_B^*Q)
	 \otimes \cO_{\mathbb{P}(W)}(-1)\notag\\
	&\to H^0(X_B, R^i\beta_*(\omega_{X}\otimes a^*P)\otimes a_B^*Q)
	 \otimes \cO_{\mathbb{P}(W)}\to 0.\notag
	\end{align}
	Chasing through the above diagram, we get
	\begin{equation*}
		H^i(\mathbb{P}(W), \cO_{\mathbb{P}(W)}(-j))\cong H^0(X_B, R^i\beta_*(\omega_{X}\otimes a^*P)\otimes a_B^*Q)\otimes H^0(\mathbb{P}(W), \cO_{\mathbb{P}(W)}).
	\end{equation*}
	Hence
	\begin{equation}\label{eqn:egality1}
	\chi(X_B, R^i\beta_*(\omega_{X}\otimes a^*P))=h^0(X_B, R^i\beta_*(\omega_{X}\otimes a^*P)\otimes a_B^*Q)=\binom{j-1}{i}.
	 \end{equation}
	If we now tensor \eqref{eqn:sequence1} by $\cO_{\mathbb{P}(W)}(-1)$, chasing through the diagram we get
	\begin{equation*}
		H^i(\mathbb{P}(W), \cO_{\mathbb{P}(W)}(-j-1))\cong H^0(X_B, \beta_*(\omega_X\otimes a^*P)\otimes a_B^*Q)\otimes H^i(\mathbb{P}(W), \cO_{\mathbb{P}(W)}(-i-1)).
	\end{equation*}
	Hence
	\begin{equation}\label{eqn:egality2}
	 h^0(X, \omega_X\otimes a^*P\otimes a^*Q)=h^0(X_B, \beta_*(\omega_X\otimes a^*P)\otimes a_B^*Q)=\binom{j}{i}.
	 \end{equation}
	Combining \eqref{eqn:egality1} and \eqref{eqn:egality2}, we get
	\begin{equation*}\frac{h^0(X, \beta_*(\omega_X\otimes a^*P)\otimes a_B^*Q)}{h^0(X_B, R^i\beta_*(\omega_{X}\otimes a^*P)\otimes a_B^*Q)}=\frac{j}{j-i}\leq i+1,\end{equation*}
	and the equality holds only if $j=i+1$.

	\smallskip

	Let $d>1$ be the order of $P$ and $G$ the group generated by $P$.
	We then consider the \'{e}tale cover $A'\to A$ induced by $P$ and the corresponding base change diagram
	\begin{equation*}
	\xymatrix@C=12pt@R=12pt{
	&X\ar[rr]^(.5){f}&& A \\
	X'\ar[ur]^{\eta_X}\ar[rr]_{f'}&& A'.\ar[ur]^{\eta_A}
	}
	\end{equation*}
	Combining this diagram with \eqref{eqn:xy1} we obtain,
	\begin{equation*}
	\xymatrix@C=12pt@R=12pt{
	&X\ar[rr]\ar'[d][ddd]^{\beta}&& A\ar[ddd] \\
	X'\ar[ddd]_(.7){\beta'}\ar[ur]^{\eta_X}\ar[rr]&& A'\ar[ur]^{\eta_A}\ar[ddr]\\
	&\\
	&X_B\ar[rr]&& B,\\
	X'_B\ar[ur]^{\nu}\ar[urrr]_{\mu}
	}
	\end{equation*}
	where $X'\stackrel{\beta'}\to X'_B \stackrel{\nu}\to X_B$ is a birational modification of the Stein factorization of $X'\to X_B$ such that $X'_B$ is smooth.
	As in \cite[Proof of Lem.~3.3]{JLT}, we can arrange the modifications such that $\nu:X'_B\to X_B$ is still a generically finite $G$-cover and $\beta': X\to X'_B$ is $G$-equivariant.

	Let $V$ be the $(i+1)$-dimensional quotient vector space $H^0(A, \Omega_{A})/H^0(B, \Omega_B)$ and let $\cS$ be the saturation of ${\beta'}^*\Omega_{X'_B}\to \Omega_{{X'}}$ and $\cQ$ be the quotient of $\Omega_{{X'}}/\cS$.
	Then, we have on ${X'}$ the following commutative diagram of coherent sheaves:

	\begin{eqnarray*}
	\xymatrix@C=15pt@R=12pt{
	0\ar[r]& {\beta'}^*H^0(B, \Omega_B)\otimes \cO_{{X'}}\ar[r]\ar[d]& H^0(A, \Omega_{A})\otimes\cO_{{X'}}\ar[r]\ar[dd] &V\otimes \cO_{{X'}}\ar[r]\ar[dd]& 0\\
	& {\beta'}^*\Omega_{X'_B}\ar[d]\\
	0\ar[r]& \cS\ar[r] & \Omega_{{X'}}\ar[r] & \cQ \ar[r]& 0.
	}
	\end{eqnarray*}
	Considering $i$-th wedge of the right column, we have a morphism
	\begin{equation*} \bigwedge^iV\otimes \cO_{{X'}}\to \det\cQ\hookrightarrow\omega_{{X'}/X'_B},
	\end{equation*}
	and hence a morphism
	\begin{equation}\label{eqn:injective1}\bigwedge^iV\otimes \cO_{X'_B}\to \beta'_*\omega_{{X'}/X'_B}.
	\end{equation}

	Recall that, $Y_b$ is an irreducible ample divisor of $K_b$, by Lemma \ref{lem:propsett1}{\it (ii)}.
	By \cite[Corollary 3.11]{mor}, then the natural morphism \eqref{eqn:injective1} is injective.
	This gives us the following injective morphism between coherent sheaves on $X'_B$,
	\begin{equation*}\omega_{X'_B}^{\oplus(i+1)}\hookrightarrow {\beta'}_*\omega_{{X'}}.
	\end{equation*}
	Since $\beta'$ is $G$-equivariant, the above morphism is also $G$-equivariant.
	Hence, if we push-forward to $X_B$ and consider the direct summand corresponding to $P$, we have the following injective morphism of coherent sheaves on $X_B$,
	\begin{equation}\label{eqn:injective}R^i\beta_*(\omega_X\otimes a^*P)^{\oplus(i+1)}\hookrightarrow \beta_*(\omega_X\otimes a^*P).
	\end{equation}
	Hence, for any $Q\in\PB$, we have
	\begin{equation*}
	(i+1)h^0(X_B, R^i\beta_*(\omega_{X}\otimes a^*P)\otimes Q)\leq h^0(X_B, \beta_*(\omega_X\otimes a^*P)\otimes Q).
	\end{equation*}

	Therefore, we have $j=i+1$.
	So, from \eqref{eqn:egality1}, we get $\chi(X_B, R^i\beta_*(\omega_{X}\otimes a^*P))=\binom{i+1-1}{i}=1$ and, from \eqref{eqn:egality2}, $\chi(X_B, \beta_*(\omega_{X}\otimes a^*P))=\binom{i+1}{i}=i+1$.

	When $P=0$, we just note that $a^*P=\cO_X$ and $R^i\beta_*\omega_X=\omega_{X_B}$ by \cite[Prop.~7.6]{ko1}.
\end{proof}

The following proposition studies the general fibre $Y_b$.
More concretely, we want to prove that $\chi(X_b, \omega_{X_b})=1$ and $q(X_b)=i+1$.

\begin{prop}\label{prop:fiber=1}
Under the same assumptions as in Proposition \ref{prop:base=1} and using the notation of Setting \ref{set:c1},
we consider in addition that $Y_B$ is of general type.

Then a general fiber $X_b$ of $\beta$ has $\chi(X_b, \omega_{X_b})=1$ and $q(X_b)=i+1$.
\end{prop}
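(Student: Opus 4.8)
The plan is to bracket the two invariants between lower bounds coming from the divisorial geometry of $Y_b$ and upper bounds coming from the relative structure of $\beta$ together with Proposition \ref{prop:base=1}. By Lemma \ref{lem:pass0} I may assume $P=0$, so that $\PB$ is a component of $V^i(\omega_X,a)$; by Lemma \ref{lem:propsett1} we have $\dim X_b=i$ and $Y_b$ is an irreducible ample divisor in the $(i+1)$-dimensional translate $K_b$ of $\ker\pi_B$. First I would check that a general $Y_b$ is again smooth in codimension $1$: since $\dim\mathrm{Sing}(Y)\le\dim Y-2$, a dimension count on the fibres of $\mathrm{Sing}(Y)\to Y_B$ combined with generic smoothness gives $\dim\mathrm{Sing}(Y_b)\le i-2$, and, being a Cartier divisor in the smooth $K_b$, the variety $Y_b$ is Gorenstein and regular in codimension $1$, hence normal. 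Then Lemma \ref{lem:primitive} applied to $a_b\colon X_b\to Y_b\hookrightarrow K_b$ shows $a_b^{*}\colon\Pic^0(K_b)\hookrightarrow\Pic^0(X_b)$ is injective, so $q(X_b)\ge i+1$; and since an irreducible ample divisor in an abelian variety is of general type, Lemma \ref{lem:hypo3} yields $\chi(X_b,\omega_{X_b})\ge1$.

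For the opposite inequalities I would pass to the relative setting. By Grauert--Riemenschneider and cohomology-and-base-change, $\rk R^{k}\beta_{*}\omega_X=h^{k}(X_b,\omega_{X_b})$ for general $b$, so $\chi(X_b,\omega_{X_b})=\sum_{k}(-1)^{k}\rk R^{k}\beta_{*}\omega_X$. Restricting to $X_b$ the holomorphic $(i-k)$-forms pulled back from $K_b$ and using the injectivity of \cite[Cor.~3.11]{mor} (exactly as in Proposition \ref{prop:base=1}, now with $\bigwedge^{i}V$ replaced by $\bigwedge^{\,i-k}V$) should produce injections $\omega_{X_B}^{\oplus\binom{i+1}{k+1}}\hookrightarrow R^{k}\beta_{*}\omega_X$, i.e. $h^{k}(X_b,\omega_{X_b})\ge\binom{i+1}{k+1}$. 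At the two extremes these become equalities: the injection \eqref{eqn:injective} together with $\chi(X_B,\beta_{*}\omega_X)=i+1=(i+1)\chi(X_B,\omega_{X_B})$ forces the cokernel of $\omega_{X_B}^{\oplus(i+1)}\hookrightarrow\beta_{*}\omega_X$ to have $\chi=0$ and, by Proposition \ref{prop:base=1} and $M$-regularity, no sections after a general twist by $Q\in\PB$, whence it vanishes and $h^{0}(X_b,\omega_{X_b})=i+1$; the case $k=i$ is the tautology $R^{i}\beta_{*}\omega_X\cong\omega_{X_B}$.

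If the analogous cokernels $\mathcal C_k:=\mathrm{coker}\big(\omega_{X_B}^{\oplus\binom{i+1}{k+1}}\hookrightarrow R^{k}\beta_{*}\omega_X\big)$ vanish for all $k$, then $h^{k}(X_b,\omega_{X_b})=\binom{i+1}{k+1}$, so $q(X_b)=h^{i-1}(X_b,\omega_{X_b})=i+1$ and $\chi(X_b,\omega_{X_b})=\sum_{k=0}^{i}(-1)^{k}\binom{i+1}{k+1}=1$, as claimed. As a consistency check, once $\chi(X_b,\omega_{X_b})=1$ is secured, Corollary \ref{cor:divisorcase} applied to the smooth-in-codimension-$1$ ample divisor $Y_b\subset K_b$ exhibits $Y_b$ as a theta divisor, which re-proves $q(X_b)=\dim K_b=i+1$ independently.

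The main obstacle is precisely the vanishing of $\mathcal C_k$ for the intermediate indices $0<k<i$. Each $R^{k}\beta_{*}\omega_X$ is $M$-regular by Lemma \ref{lem:hypo3}\emph{(iv)} (applied to $\beta\colon X\to X_B$, using that $Y_B$ is of general type), but an $M$-regular quotient is not automatically $M$-regular, and the relative derivative complex \eqref{eqn:sequence1} pins down $\chi(X_B,R^{k}\beta_{*}\omega_X)$ only at $k=0$ and $k=i$, so the Euler-characteristic-plus-generic-vanishing argument closes only at the extremes. Geometrically the vanishing of $\mathcal C_k$ asserts that every form on $X_b$ is restricted from $K_b$, equivalently that the polarisation $Y_b$ is principal; this is the step where the global hypothesis $\chi(X,\omega_X)=1$ must be used in an essential way rather than through numerical bookkeeping alone, and it is where the real work of the proof lies.
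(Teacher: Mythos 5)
Your second paragraph is, in essence, the paper's entire proof: the injection \eqref{eqn:injective} has cokernel $\cW$ with $\chi(X_B,\cW)=0$ by Proposition \ref{prop:base=1}, while if $\cW\neq 0$ then $a_{B*}\cW$ would be a non-trivial M-regular sheaf (Lemma \ref{lem:hypo3}, using that $Y_B$ is of general type) with $V^0(a_{B*}\cW)=\PB$, a contradiction; hence $\rk\beta_*(\omega_X\otimes a^*P)=i+1$ and $h^0(X_b,\omega_{X_b})=i+1$. Up to that point you agree with the paper. The genuine gap is what follows. The paper concludes directly from $h^0(X_b,\omega_{X_b})=i+1$ by invoking \cite[Cor.~3.11]{mor}: the injectivity of $\bigwedge^{i}H^0(A_{X_b},\Omega^1_{A_{X_b}})\to H^0(X_b,\omega_{X_b})$ gives $\binom{q(X_b)}{i}\leq i+1$, while $Y_b$ being an ample divisor generating the $(i+1)$-dimensional $K_b$ gives $q(X_b)\geq i+1$; hence $q(X_b)=i+1$, and with it $\chi(X_b,\omega_{X_b})=1$. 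You instead set out to compute every $h^k(X_b,\omega_{X_b})$ and explicitly concede that you cannot handle the intermediate cokernels $\mathcal C_k$ for $0<k<i$, calling that ``where the real work of the proof lies.'' As written, your argument therefore establishes neither $\chi(X_b,\omega_{X_b})=1$ nor $q(X_b)=i+1$.

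Two further points. First, the obstacle you flag is not actually there: once Proposition \ref{prop:base=1} has fixed $j=i+1$, the complex \eqref{eqn:sequence1} is an exact sequence of direct sums of line bundles on $\mathbb{P}^i$ with strictly decreasing twists $-i-1,-i,\dots,0$, and splitting it into short exact sequences determines \emph{all} the multiplicities, giving $\chi(X_B,R^k\beta_*(\omega_X\otimes a^*P))=\binom{i+1}{k+1}$ for every $k$, not only for $k=0$ and $k=i$; your own ``$\chi=0$ plus M-regularity'' argument would then kill each $\mathcal C_k$. So your route could be completed, but you did not complete it, and it is in any case heavier than the paper's. Second, your opening reductions (Lemma \ref{lem:pass0} to put $P=0$, and smoothness in codimension $1$ of $Y_b$ feeding into Lemma \ref{lem:primitive} and Corollary \ref{cor:divisorcase}) import the hypothesis that $Y$ is smooth in codimension $1$, which is \emph{not} among the assumptions of this proposition; the paper's proof deliberately carries $a^*P$ along and never uses that hypothesis, which matters because the proposition is later applied (e.g.\ in Proposition \ref{prop:2n-1A}) where it is unavailable.
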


\begin{proof}
	We use again the notation in Setting \ref{set:c1}, in particular we recall diagram \eqref{eqn:xy1}
	\begin{equation*}
		\xymatrix{
		X_b\ar@{^{(}->}[d]\ar[r]& Y_b \ar@{^{(}->}[d] \\
		X\ar[r]\ar[d]_{\beta} & Y\ar[d]\ar@{^{(}->}[r] & A\ar[d]\\
		X_B\ar[r]\ar@/_1.5pc/[rr]_{a_{B}}& Y_B\ar@{^{(}->}[r] & B.}
	\end{equation*}

	First, we claim that the injective morphism \eqref{eqn:injective} is generically an isomorphism.
	Otherwise, we have a short exact sequence of coherent sheaves:
	\begin{equation*}0\to R^i\beta_*(\omega_X\otimes a^*P)^{\oplus(i+1)}\to \beta_*(\omega_X\otimes a^*P)\to \cW\to 0,\end{equation*}
	where $\rk\cW\geq 1$.
	Then, we have a short exact sequence of non-trivial coherent sheaves on $B$,
	\begin{equation*}0\to a_{B*}R^i\beta_*(\omega_X\otimes a^*P)^{\oplus(i+1)}\to a_{B*} \beta_*(\omega_X\otimes a^*P)\to a_{B*}\cW\to 0.\end{equation*}
	Since $Y_B$ is of general type, $ a_{B*}\beta_*(\omega_X\otimes a^*P)$ is M-regular on $B$ (see Lemma \ref{lem:hypo3}).
	Therefore, $a_{B*}\cW$ is a non-trivial M-regular sheaf on $B$ and $V^0(a_{B*}\cW)=\PB$ (see Remark~\ref{rem:chi>0-Xgt}{\it(ii)}).
	However, for $Q\in \PB$ general, $h^0(X_B, \cW\otimes a_B^*Q)=h^0(X_B, \beta_*(\omega_X\otimes a^*P)\otimes a_B^*Q)-(i+1)h^0(X_B, R^i\beta_*(\omega_X\otimes a^*P)\otimes a_B^*Q)=0$, which contradicts $V^0(a_{B*}\cW)=\PB$.

	Thus, for a general fiber $X_b$ of $\beta$, we have $h^0(X_b, \omega_{X_b})=\rk \beta_*(\omega_X\otimes a^*P)=i+1$ and then, by \cite[Corollary 3.11]{mor}, $\chi(X_b, \omega_{X_b})=1$ and $q(X_b)=i+1$.
\end{proof}

Now we are ready to prove Theorem \ref{thm:sm-in-codim1-sav} by induction on the number of irreducible components of $\Vz^1(\omega_X,a_X)$.

\begin{proof}[Proof of Theorem \ref{thm:sm-in-codim1-sav}]
	Note that $Y$ is of general type by Remark \ref{rem:chi>0-Xgt}{\it(i)}.
	When $\codim_A Y=1$, we are in the divisor case and we conclude by Corollary \ref{cor:divisorcase}.
	In particular $V^1(\omega_X,a)=\set{0}$.

	So we may assume $\codim_A Y\geq 2$. We consider the Albanese morphism of $a_X:X\to A_X$, and we denote by $M=a_X(X)$ and $t: A_X\to A$ the induced morphism between abelian varieties.
	Then we have the commutative diagram
	\begin{equation}\label{eqn:lleig}
		\xymatrix@!C=30pt@R=12pt{
		X\ar[dr]^g\ar[rr]^{f}&& Y\ar@{^{(}->}[rr]&&A \\
		& M\ar[ur]_(.6){t_Y}\ar@{^{(}->}[rr]&& A_X\ar[ur]_(.6)t,}
	\end{equation}
	where $a:X\to Y\hookrightarrow A$ and $a_X:X\to M\hookrightarrow A_X$. Since $f$ is birational, $g$ and $t_Y$ are also birational.
	As we already mentioned, we assume that $Y$ generates $A$, so $\widehat{t}: \PA\to \PA_X$ is an isogeny onto its image

	By Lemma \ref{lem:V^1}, any irreducible component of codimension-$(i+1)$ of $V^1(\omega_X,a)$ (respectively of $V^1(\omega_X,a_X)$) is an irreducible component of $V^i(\omega_X,a)$ (resp. $V^i(\omega_X,a_X)$).
	Thus, all irreducible components of $V^1(\omega_X,a)$ and $V^1(\omega_X,a_X)$ are positive dimensional, since $\codim_{A_X} M\geq \codim_A Y\geq 2$.
	Hence, the conditions in Setting \ref{set:c1} are fulfilled and we use the notation therein.

	By Lemma \ref{lem:pass0} (see also Remark \ref{rem:V1pass0}), each codimension-$(i+1)$ component $\PB$ of $V^i(\omega_X, a)$ passes through $0$ and
	$X_B$ and $Y_B$ are of general type.
	
    We denote by $\Vz^1(\omega_X, a_X)$ the union of irreducible components of $V^1(\omega_X, a_X)$ passing through $0$.
	We denote by $k$ the number of positive dimensional irreducible components of $\Vz^1(\omega_X, a_X)$,
	\begin{equation*}
		\Vz^1(\omega_X, a_X)=\bigcup_{1\leq j\leq k}\PPB_j,
	\end{equation*}
    and $\codim_{\PA_X}\PPB_j=i_j+1$.

	We consider $\PB_j=\widehat{t}^{-1}(\PPB_j)$.
	Then, $\codim_{\PA}\PB_j\leq i_j+1$.
	Moreover, since $M$ is of general type, then $\PPB_j$ is an irreducible component of $V^{i_j}(\omega_X, a_X)$ by Lemma \ref{lem:V^1}.
	Thus, $\PB_j\subset V^{i_j}(\omega_X, a)$.
	Since $Y$ is also of general type, then $\codim_{\PA}\PB_j\geq i_j+1$ by Lemma \ref{lem:hypo3}.
	Thus, $\PB_j$ is a component of $V^{i_j}(\omega_X, a)$ of codimension $i_j+1$ in $\PA$.

	We first consider Setting \ref{set:c1} with respect to $\widetilde{B}_1$ and $B_1$, irreducible components of $V^{i_1}(\omega_X, a_X)$ and $V^{i_1}(\omega_X, a)$ of codimension $i_1+1$ in $\PA_X$ and $\PA$.
	Using the perspective of \eqref{eqn:lleig} and \eqref{eqn:xy1}, we have
	\begin{equation*}
		\xymatrix@!C=30pt@R=12pt{
		X_{b_1}\ar@{^{(}->}[dd]\ar[rr]^{f_{b_1}}&& Y_{b_1} \ar@{^{(}->}[dd] \\ &\\
		X\ar[dd]_{\beta_{1}}\ar[dr]_g\ar[rr]^{f}&& Y \ar@{^{(}->}[rr]\ar'[d][dd]&& A \ar[dd] \\
		& M\ar[ur]_(.6){t_Y}\ar[dd]\ar@{^{(}->}[rr]&& A_X \ar[ur]_(.6)t\ar[dd]\\
		X_{\widetilde{B}_1}\ar[dr]_{g_1}\ar'[r]^{f_{B_1}}[rr]&& Y_{B_1}\ar@{^{(}->}'[r][rr]&& B_1\\
		& M_{\widetilde{B}_1}\ar[ur]\ar@{^{(}->}[rr] && \widetilde{B}_1\ar[ur],}
	\end{equation*}
	where $X_{\widetilde{B}_1}$ is smooth projective and $\beta_1: X\rightarrow X_{\widetilde{B}_1}$ is a birational model of the Stein factorization of
    $X\rightarrow M\rightarrow M_{\widetilde{B}_1}$.
	Then $\beta_1$ is also a model of the Stein factorization of $X\rightarrow Y\rightarrow Y_{B_1}$, so in the notation of Setting \ref{set:c1}, $X_{\widetilde{B}_1}=X_{B_1}$.
	Note that, since $Y$ is smooth in codimension $1$, by Lemma \ref{lem:pass0} we have that $Y_{B_1}$ is of general type and $f_{B_1}: X_{\widetilde{B}_1}\to Y_{B_1}$ is a birational morphism.
	Thus, $g_1: X_{\widetilde{B}_1}\to M_{\widetilde{B}_1}$ is also a birational morphism.

	Since $M_{\widetilde{B}_1}$ is of general type, we can apply Proposition \ref{prop:fiber=1} to $X\rightarrow M\rightarrow M_{\widetilde{B}_1}$.
	Hence, for a general point $b_1\in X_{\widetilde{B}_1}$, and $X_{b_1}$ the fiber of $\beta_1$ over $b_1$, we have $\chi(X_{b_1}, \omega_{X_{b_1}})=1$.

	Since $f$ is birational, $f_{b_1}=\res{f}{X_{b_1}}$ is a birational morphism  onto the corresponding fiber $Y_{b_1}$ of $Y\rightarrow Y_{B_1}$. Moreover, since $Y$ is smooth in codimension $1$, so is $Y_{b_1}$.
	Let $K_1$ be the kernel of $A\to B_1$.
	Then, by Lemma \ref{lem:propsett1}{\it (ii)}, $Y_{b_1}$ is an irreducible smooth-in-codimension-$1$ ample divisor of a translate of $K_1$.
	We conclude by Corollary \ref{cor:divisorcase} that $Y_{b_1}$ is a theta divisor $\Theta_1$ of $K_1\subset A$.
	In particular, $X_{b_1}$ is birational to $\Theta_1$.
	By Proposition \ref{prop:fibertheta}, we can assume that $X=X_{b_1}\times X_{\widetilde{B}_1}$. Thus, we have
	\begin{equation}\label{eqn:1stprod}
		A_X=K_1\times A_{X_{\widetilde{B}_1}}\qquad \text{ and } \qquad M=\Theta_1\times M_1,
	\end{equation}
	where we define $M_1:=M_{X_{\widetilde{B}_1}}=a_{X_{\widetilde{B}_1}}(X_{\widetilde{B}_1})$.
	Hence, we also have that $\PPB_1=\set{0}\times \PA_{X_{\widetilde{B}_1}}$ and
	\begin{equation*}
		\Vz^1(\omega_X, a_X)=\Big(\set{0}\times \PA_{X_{\widetilde{B}_1}}\Big)\cup \Big(\PK_1\times \Vz^1(\omega_{X_{\widetilde{B}_1}}, a_{X_{\widetilde{B}_1}})\Big).
	\end{equation*}
	Therefore $\PPB_2=\PK_1\times \PB'_{2}$ for some abelian subvariety $\PB'_{2}\subseteq \Vz^1(\omega_{X_{\widetilde{B}_1}}, a_{X_{\widetilde{B}_1}})\subset \PA_{X_{\widetilde{B}_1}}$.

    We now consider Setting \ref{set:c1} with respect to $\widetilde{B}_2$ and $B_2$, irreducible components of codimension $i_2+1$.
	Using the perspective of \eqref{eqn:lleig}, \eqref{eqn:xy1}, as before, and adding the decomposition \eqref{eqn:1stprod}, we have
	\begin{equation*}
	\xymatrix@!C=30pt@R=12pt{
	X_{b_2}\ar@{^{(}->}[dd]\ar[rr]^{f_{b_2}}&& Y_{b_2} \ar@{^{(}->}[dd] \\ &\\
	X=X_{b_1}\times X_{\widetilde{B}_1}\ar[dd]_{\beta_{2}}\ar[dr]_g\ar[rr]^{f}&& Y \ar@{^{(}->}[rr]\ar'[d][dd]&& A \ar[dd] \\
	& \Theta_1\times M_1\ar[ur]_(.6){t_Y}\ar[dd]\ar@{^{(}->}[rr]&& K_1\times A_{X_{\widetilde{B}_1}} \ar[ur]_(.6)t\ar[dd]\\
	X_{\widetilde{B}_2}\ar[dr]_{g_2}\ar'[r]^{f_{B_2}}[rr]&& Y_{B_2}\ar@{^{(}->}'[r][rr]&& B_2\\
	& \Theta_1\times M'_{2}\ar[ur]\ar@{^{(}->}[rr] && \widetilde{B}_2=K_1\times B'_{2}\ar[ur],}
	\end{equation*}
	where we have denoted by $M'_{2}$ the image of the composition of morphism $M_1\to A_{X_{\widetilde{B}_1}}\to B'_{2}$.
	Note that $f_{B_2}$ and $g_2$ are birational and $Y_{B_2}$ is of general type by Lemma \ref{lem:pass0}.
 	
	Let $K_2$ be the kernel of $A\to B_2$.
	As before, Lemma \ref{lem:propsett1}, Proposition \ref{prop:fiber=1}, and Corollary \ref{cor:divisorcase} imply that a general fiber $X_{b_2}$ of $\beta_{2}$ is birational to a theta divisor $\Theta_2$ of $K_2\subset A$.
	Then, by Proposition \ref{prop:fibertheta}, $X_{b_1}\times X_{\widetilde{B}_1}$ is birational to $X_{b_2}\times X_{\widetilde{B}_2}$.
	Moreover, the morphism $g_2:X_{\widetilde{B}_2}\rightarrow \Theta_1\times M'_2$ induces a birational equivalence between $X_{\widetilde{B}_2}$ and the product of $X_{b_1}$ with another variety that we call $X_{2}$.
	That is, $X$ is birational to $X_{b_1}\times X_{b_2}\times X_{2}$ and we have
	\begin{equation*}\label{eqn:2stprod}
		A_X=K_1\times K_2 \times A_{X_{2}}\qquad \text{ and } \qquad M=\Theta_1\times \Theta_2 \times M_2,
	\end{equation*}
	where $M_2=a_{X_{2}}(X_{2})$.
	Hence, we also have that $\PPB_2=\PK_1\times \set{0}\times \PA_{X_{2}}$ and
	\begin{equation*}
		\Vz^1(\omega_X, a_X)=\Big(\set{0}\times \PA_{X_{\widetilde{B}_1}}\Big)\cup \Big(\PK_1\times \set{0}\times \PA_{X_{2}}\Big)\cup \Big(\PK_1\times \PK_2 \times \Vz^1(\omega_{X_{2}}, a_{X_{2}})\Big).
	\end{equation*}
	Therefore, $\PPB_3=\PK_1\times \PK_2 \times \PB'_{3}$ for some abelian subvariety $\PB'_{3}\subseteq \Vz^1(\omega_{X_{2}}, a_{X_{2}})\subset \PA_{X_{2}}$.

\medskip

	We can iterate this process using all $\PPB_j$ for $1\leq j\leq k$.
	For each fibration $\beta_{j}: X\to X_{\widetilde{B}_j}$, we get a further decomposition of $X$.
	Finally, we conclude that $X$ is birational to $X_{b_1}\times \ldots \times X_{b_k}\times X_{k}$,
	\begin{equation*}\label{eqn:genprod}
		A_X=K_1\times \ldots \times K_k \times A_{X_{k}}\qquad \text{ and } \qquad M=\Theta_1\times\ldots \times \Theta_k \times M_k,
	\end{equation*}
	where $M_k=a_{X_{k}}(X_{k})$.
	Hence, we have that
	\begin{align*}
		\Vz^1(\omega_X, a_X)=&\Big(\set{0}\times \PA_{X_{\widetilde{B}_1}}\Big)\cup \Big(\PK_1\times \set{0}\times \PA_{X_{2}}\Big)\cup \Big(\PK_1\times \PK_2 \times \set{0}\times \PA_{X_{3}}\Big)\cup\\
		&\cup\ldots\cup \Big(\PK_1\times \cdots \times \PK_{k-1} \times \set{0}\times \PA_{X_{k}}\Big)\cup\\
		&\cup \Big(\PK_1\times \cdots \times \PK_{k}\times \Vz^1(\omega_{X_{k}}, a_{X_{k}})\Big).
	\end{align*}

	To conclude, we only need to show that $M_k$ is a point.
	Suppose by contradiction that $\dim M_k\geq 1$.
	Then $\dim X_{k}=\dim M_k\geq 1$ and hence $V^1(\omega_{X_{k}}, a_{X_{k}})$ contains $0$.
	Then, $\PK_1\times\cdots\times \PK_k\times V^1(\omega_{X_k}, a_{X_k})$ exhibits a new component of $\Vz^1(\omega_X, a_X)$, which is a contradiction.
	
	Hence, $X$ is birational to a product of $k$ theta divisors and $A_X=K_1\times \cdots \times K_k$.
\end{proof}

Once we have proven Theorem \ref{thm:sm-in-codim1-sav}, we can deduce Theorem \ref{thmA} from it.
\begin{proof}[Proof of Theorem \ref{thmA}]
	Only one direction needs to be proved.

	We fix a desingularization $f: X\to Y$.
	Then by assumption, $\chi(X, \omega_X)=1$.
	As we already said, we assume that $Y$ generates $A$.
	By Theorem \ref{thm:sm-in-codim1-sav}, $X$ is birational to a product of theta divisors and the induced morphism $t: A_X\to A$ is surjective.

	The image $a_X(X)$ is a product of theta divisors $\Theta_1\times\cdots\times \Theta_m$.
	We denote by $K_j$ the Albanese variety $A_{\Theta_j}$ of $\Theta_j$.
	In order to show that the induced morphism $\tau: \Theta_1\times\cdots\times \Theta_m\to Y$ is an isomorphism, we just need to prove that $t:A_X=K_{1}\times\cdots \times K_{m}\to A$ is an isomorphism.
        We will show that if $t$ is not an isomorphism, $Y$ can not be normal.

\medskip

 	First of all, we assume that $t$ is an isogeny of degree $d>1$.
 	Then $t^{-1}(Y)$ has $d$ irreducible components, each of which is a translate of the product of theta divisors $\Theta_1\times\cdots\times \Theta_m$ in $A_X$.
	Theta divisors are ample divisors, so the $d$ irreducible components meet.
	Thus, $t^{-1}(Y)$ is connected but not irreducible, hence not normal.
	This gives a contradiction, since $Y$ is normal and $t$ is \'etale.

\medskip
	
	Now we assume that $\dim \ker t>0$.
	The strategy is to prove that there always exist non-connected fibers of $\tau$ to get a contradiction with Zariski's Main Theorem.

	Let $K$ be  the neutral component of $\ker t$.
	We denote by $k=\dim K$ and $\varphi_j: K\to K_j$ the natural projection.
	By induction on the number of theta divisors, we can assume that $\varphi_j\neq 0$ for all $j$.
	Let $t_1: A_X\to A_X/K$ be the quotient and $\overline{Y}$ the image $t_1(a_X(X))$.
	Then we have the following commutative diagram
	\begin{equation*}
		\xymatrix{
		\Theta_1\times\cdots\times\Theta_m \ar@{^{(}->}[r]\ar@/_2pc/[dd]_{\tau}\ar[d]^{\tau_1} & K_1\times\cdots\times K_m\ar[d]^{t_1}\ar@/^2pc/[dd]^t\\
		\overline{Y} \ar@{^{(}->}[r]\ar[d]^{\tau_2} & A_X/K\ar[d]^{t_2}\\
		Y \ar@{^{(}->}[r] & A,}
	\end{equation*}
	Note that, since $Y\subsetneq A$, we have $k=\dim K< m$.

	For $c=(c_1, \ldots, c_m)\in \Theta_1\times\cdots\times\Theta_m$, we have $\tau_1^{-1}\tau_1(c)=(K+c)\cap (\Theta_1\times\cdots\times\Theta_m)$.
	Thus,
	\begin{equation*}
		\tau_1^{-1}\tau_1(c)\cong\varphi_1^{-1}(\Theta_1-c_1)\cap\cdots \cap \varphi_m^{-1}(\Theta_m-c_m).
	\end{equation*}
	We observe the following:
	\begin{itemize}
		\item[1)] $\bigcup_{c_j\in\Theta_j}(\Theta_j-c_j)=(\Theta_j-\Theta_j)=K_j$, since $\dim K_j>1$;
		\item[2)] Since $\tau$ is birational, for $c\in \Theta_1\times\cdots\times\Theta_m$ general, we have $\bigcap_{j=1}^m\varphi_j^{-1}(\Theta_j-c_j)=\set{0_K}$ the identity element of $K$.
	\end{itemize}
	Thanks to the second observation, there exists $1\leq t\leq m-1$ and $d_{j}\in\Theta_{j}$ for $1\leq j\leq t$ such that,
	\begin{equation*}
		\begin{cases}
			\dim\big(\bigcap_{i=1}^t\varphi_{i}^{-1}(\Theta_{i}-d_{i})\big)=1 &\text{and}\\
			\dim\big(\bigcap_{i=1}^{t+1}\varphi_{i}^{-1}(\Theta_{i}-d_{i})\big)=0 & \text{for a general }d_{t+1}\in \Theta_{t+1}.
		\end{cases}
	\end{equation*}

\medskip

    We first claim that $\bigcap_{i=1}^t\varphi_{i}^{-1}(\Theta_{i}-d_{i})$ is connected.
	Otherwise, let $\nu\in\bigcap_{i=1}^t\varphi_{i}^{-1}(\Theta_{i}-d_{i})$ be a point not in the connected component containing $0_K$.
	We then choose $d_j'\in \Theta_j\cap(\Theta_j-\varphi_j(\nu))$ for $t+1\leq j\leq m$.
	Then
	\begin{equation*}
		\set{0_K, \nu}\subset \Big(\bigcap_{i=1}^t\varphi_{i}^{-1}(\Theta_{i}-d_{i})\Big)\cap\Big(\bigcap_{j=t+1}^m\varphi_j^{-1}(\Theta_j-d_j')\Big)    \subset\bigcap_{i=1}^t\varphi_{i}^{-1}(\Theta_{i}-d_{i}).
	\end{equation*}
    Thus, $\big(\bigcap_{i=1}^t\varphi_{i}^{-1}(\Theta_{i}-d_{i})\big)\cap\big(\bigcap_{j=t+1}^m\varphi_j^{-1}(\Theta_j-d_j')\big)$ is also not connected. We denote $$d'=(d_1,\ldots,d_t, d_{t+1}',\ldots,d_{m}').$$ Then $\tau_1^{-1}\tau(d')$ is not connected. Since $t_2$ is an isogeny, $\tau_2$ is finite. Hence, $\tau_2^{-1}(\tau(d'))$ is a set of finite points containing $\tau_1(d')$. Thus, $\tau^{-1}\tau(d')$ is also not connected.
	This contradicts Zariski's Main Theorem, since $\tau$ is birational and $Y$ is normal.

\medskip

	We now consider $\bigcap_{i=1}^t\varphi_{i}^{-1}(\Theta_{i}-d_{i})$ with the reduced scheme structure and we denote it by $C$.
	Hence, $C$ is a connected curve containing $0_K$. Then $\varphi_{t+1}(C)$ is a curve in $K_{t+1}$ containing the identity element of $K_{t+1}$.
	We have the following Zariski open dense subset of $\Theta_{t+1}$:
	\begin{equation*}
		U:=\set{u\in\Theta_{t+1}\mid \dim \big(C\cap \varphi_{t+1}^{-1}(\Theta_{t+1}-u)\big)=0}.
	\end{equation*}
	Considering the following set
	\begin{equation*}
		V:=\bigcup_{\begin{smallmatrix} c\in \varphi_{t+1}(C)\\ c\neq 0_{t+1}\end{smallmatrix}}(\Theta_{t+1}-c)=\Theta_{t+1}-\big(\varphi_{t+1}(C)\setminus \set{0_{t+1}}\big),
	\end{equation*}
    we	have
    \begin{equation*}
		V\cap \Theta_{t+1}=\bigcup_{\begin{smallmatrix} c\in \varphi_{t+1}(C)\\ c\neq 0_{t+1}\end{smallmatrix}}\Big(\Theta_{t+1}\cap(\Theta_{t+1}-c)\Big).
	\end{equation*}
	Since $\Theta_{t+1}$ is a theta divisor of $K_{t+1}$,  the induced morphism
	\begin{eqnarray*}&K_{t+1}&\rightarrow \Pic^0K_{t+1}\\
		& c&\rightarrow t_c^*\cO_{K_{t+1}}(\Theta_{t+1})\otimes\cO_{K_{t+1}}(-\Theta_{t+1})
	\end{eqnarray*}
	is an isomorphism.
	Moreover, since $\Theta_{t+1}$ has rational singularities, $\Pic^0K_{t+1}\rightarrow\Pic^0\Theta_{t+1}$ is an isomorphism.
	Thus, $\res{(\Theta_{t+1}-c)}{\Theta_{t+1}}$ are pairwise different effective divisors of $\Theta_{t+1}$ for different $c\in \varphi_{t+1}(C)\setminus \set{ 0_{t+1}}$.
	Therefore $V\cap \Theta_{t+1}$ is a Zariski dense subset of $\Theta_{t+1}$.
	
	Hence $U\cap V$ is also a Zariski dense subset of $\Theta_{t+1}$.
	Given any $d_{t+1}\in U\cap V\subset \Theta_{t+1}$, the set
	\begin{equation*}
		\big(\Theta_{t+1}-d_{t+1}\big)\cap \big(\varphi_{t+1}(C)\big)
	\end{equation*}
	is finite with at least $2$ distinct points.
	Thus,
	\begin{equation*}
		\bigcap_{j=1}^{t+1}\varphi_{j}^{-1}(\Theta_{j}-d_{j}) \supset \Big( \varphi_{t+1}^{-1}(\Theta_{t+1}-d_{t+1})\cap C\Big)
	\end{equation*}
	is also a finite set containing at least $2$ distinct points $0_K$ and $\kappa$.

	By observation $1)$, we can choose $d_j\in \Theta_j$ for $t+1\leq j\leq m$ such that $\kappa\in \varphi_j^{-1}(\Theta_j-d_j)$ for any  $t+1\leq j\leq m$.
	Let $d=(d_1,\ldots,d_m)\in \Theta_1\times\cdots\times\Theta_m$.
	Then
	\begin{equation*}
		\tau_1^{-1}\tau_1(d)\cong \varphi_1^{-1}(\Theta_1-d_1)\cap\cdots \cap \varphi_m^{-1}(\Theta_m-d_m)
	\end{equation*}
	is a finite set with at least two distinct points.
	In particular, $\tau_1^{-1}\tau_1(d)$ is not connected and hence $\tau^{-1}\tau(d)$ is also not connected, which is a contradicts to Zariski's Main Theorem.
\end{proof}

As we have seen in the previous proof the difference between Theorem \ref{thmA} and Theorem \ref{thm:sm-in-codim1-sav} is quite subtle.
The following examples may help to distinguish the two situations.
\begin{exam}
	Let $X$ be a desingularization of $\Theta_1\times\Theta_2\subset A_1\times A_2=A$.
	Then, the Albanese variety of $X$ is $A=A_1\times A_2$ and the image $a_X(X)=Y$ is $\Theta_1\times\Theta_2$, which is normal.

	Now, let $P_1\in A_1$ and $P_2\in A_2$, be two $2$-torsion points.
	Consider $t:A_1\times A_2\to A'$ the isogeny induced by $(P_1,P_2)$ (just the quotient by the finite group generated by $(P_1,P_2)$).
	Indeed, $t:A_1\times A_2\to A'$ is an \'etale covering of degree $2$.
	Let $Y'\subset A'$ be $t(\Theta_1\times \Theta_2)$.
	Note that $Y'$ is smooth in codimension $1$.
	But $Y'$ is not normal, since $t^{-1}(Y')=\Big(\Theta_1\times\Theta_2 \Big)\cup \Big((\Theta_1+P_1)\times (\Theta_2+P_2)\Big)$ is connected but not irreducible, hence not normal.

	Thus, $Y'\subset A'$ exhibits a case $Y'$ is birational to a product of theta divisors, but not isomorphic.
	Moreover, $A'$ is not necessarily isomorphic to the Albanese variety of its desingularization.
	
	Summing up, in the following diagram, the first row is a paradigmatic example of Theorem \ref{thmA}, while the second row is a paradigmatic example of Theorem \ref{thm:sm-in-codim1-sav} when $t$ is an isogeny:
	\begin{equation*}
		\xymatrix{
		X\ar[r]_(.3)f\ar@{=}[d]\ar@/^1.5pc/[rr]^{a_X} & Y=\Theta_1\times \Theta_2 \ar[d]^{\mathrm{bir}}\ar@{^{(}->}[r] & A=A_X\ar[d]_{t}^{2:1}\\
		X\ar@/_1.5pc/[rr]_{a'}\ar[r]^(.3){f'} & Y'=t(\Theta_1\times \Theta_2) \ar@{^{(}->}[r] & A'.
		}
	\end{equation*}
\end{exam}

\begin{exam}
	Let $C\to E$ a bielliptic curve, such that $g(C)=2$ and let $K$ be the kernel of $JC=A_C\to E$.
	Then consider $X=C\times C\times C$ and let
	\begin{equation*}
		\xymatrix{
		X\ar@{=}[r]_f\ar@{=}[d]\ar@/^1.5pc/[rr]^{a_X} & Y\ar[d]^{\mathrm{bir}}\ar@{^{(}->}[r] & A_X\ar[d]^{t}\\
		X\ar@/_1.5pc/[rr]_{a'}\ar[r]^(.4){f'} & Y'=t(Y) \ar@{^{(}->}[r] & A',
		}
	\end{equation*}
	where $t:A_X=JC\times JC\times JC \to A$ is the quotient by the diagonal of $K$, \ie\ $\set{ (x, x, x)\in A_X \mid x\in K}$.
	Note that $\res{t}{Y}$ is a birational morphism and $Y'$ is smooth in codimension $1$.

	This example shows that, in Theorem \ref{thm:sm-in-codim1-sav}, $t$ does not need to be an isogeny.
\end{exam}

\medskip

Finally, we want to mention the following difference between Theorems \ref{thmA} and Theorem \ref{thm:sm-in-codim1-sav}.
\begin{rema}\label{rem:ppav_gencase}
	On the one hand, as we have already seen in the previous examples, the abelian variety in Theorem \ref{thm:sm-in-codim1-sav} does not need to be principally polarized (see also Remark \ref{rem:notppav_divcase}).

	On the other hand, we emphasize that in Theorem \ref{thmA} the abelian variety is always principally polarized.
	Indeed, $Y$ induces a principal polarization on $A$ and $A=A_X$.
\end{rema}

\bigskip

\section{Varieties of maximal Albanese dimension \& holomorphic Euler characteristic 1}
\label{sec:mAd}

In the previous section, we have considered $a:X\to A$ generically finite morphisms from a smooth projective variety $X$ to a subvariety of an abelian variety $Y:=a(X)\subset A$, from the point of view of $Y$, letting $X$ be its desingularization.
Now we will consider that $X$ is given and $a$ will be its Albanese morphism.

\medskip

If we consider $a_X:X\stackrel{f}\to Y:=a_X(X)\hookrightarrow A_X$, we note, even if $X$ is of general type, $f:X\to Y$ might not be birational. We have two paradigmatic examples of smooth complex projective varieties $X$ of maximal Albanese dimension and general type, such that $f$ is not birational.
The first one is when $X$ is of general type and $\chi(X,\omega_X)=0$.
The second one is when $\chi(X,\omega_X)=1$ and $q(X)=\dim X$.
As we have mentioned in the introduction, the complete classification of these examples seems to be a rather difficult problem
and clearly the knot comes from studying $f:X\to Y=a_X(X)$ rather than just $a_X(X)$ as we have done in the previous section.
%

\medskip

Now, $X$ will be always a smooth complex projective variety such that
\begin{align}
&X \text{ is of maximal Albanese dimension;}\tag{C.1}\label{hypo1}\\
&\chi(X, \omega_X)=1.\label{hypo2}\tag{C.2}
\end{align}
Then Hacon--Pardini show in \cite{HP-g2} that
\begin{equation*}
	\dim X \leq q(X) \leq 2\dim X,
\end{equation*}
and, when $q(X)$ is maximal, \ie\ $q(X)=2\dim X$, then Hacon--Pardini prove that $X$ is birational to a product of curves of genus $2$ (theta divisors of dimension 1).
In particular, $f$ is birational.

\medskip

We want to extend the classification Theorem of Hacon--Pardini to lower values of $q(X)$.
First, we will give a complete classification of the submaximal case, \ie\ varieties satisfying \Co\ and $q(X)=2\dim X-1$.
Already in this case the examples where $f$ is not birational will step into our classification problem.
More precisely, we will have two cases depending whether $f$ is birational or it has degree $2$.

Second, we will study how to characterize birationally products of higher dimensional theta divisors.
For this, we will need to impose the following extra conditions
\begin{align}
&a_X(X) \text{ is of general type;}\label{hypo3}\tag{C.3}\\
\tag{C.4} &a_X(X) \text{ is smooth in codimension }1. \label{hypo4}
\end{align}
where \eqref{hypo3} prevents $f$ from being non birational and it is an open problem to determine
if condition \eqref{hypo4} is necessary (see Question \ref{qu?}).

\bigskip

\def\thesubsection{\thesection.1}

\subsection*{Submaximal irregularity}
The case when $q(X)=2\dim X -1$, was already studied for surfaces, \ie\ $\dim X= 2$ and $q(X)=p_g(X)=3$, by Hacon--Pardini \cite{HP-surf} and Pirola \cite{pirola}.
This section will be devoted to prove the following theorem, which can be seen as a generalization of both \cite{HP-surf,pirola} and \cite{HP-g2}.
\begin{theo}\label{thm:2n-1main}
	Let $X$ be a smooth projective variety of maximal Albanese dimension.
	Assume that $\chi(X,\omega_X)=1$ and $q(X)=2\dim X-1$.
	Then $X$ is birational to one of the following
	varieties:
	\begin{enumerate}
		 \item a product of smooth curves of genus $2$ with the $2$-dimensional symmetric product of a curve of genus $3$ (\ie\ $X$ is a product of theta divisors);
		 \item $(C_1\times \widetilde{Z})/\langle\tau\rangle$, where
			$C_1$ is a bielliptic curve of genus $2$,
			$\widetilde{Z}\to C_1\times\cdots \times C_{n-1}$ is an \'etale double cover of a product of smooth projective curves of genus $2$, and
			$\tau$ is an involution acting diagonally on $C_1$ and $\widetilde{Z}$ via the involutions corresponding respectively to the double covers.
	 \end{enumerate}
\end{theo}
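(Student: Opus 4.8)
The plan is to argue by induction on $n=\dim X$, organising everything around the components of $\Vz^1(\omega_X,a_X)$, with the Fourier--Mukai input Proposition~\ref{prop:hpTech} as the main engine. Write $Y:=a_X(X)$, so $\dim A_X=q(X)=2n-1$ while $\dim Y=n$ when $a_X$ is birational onto its image. The governing dichotomy is whether $Y$ is of general type: by Lemma~\ref{lem:sof} and its contrapositive (using $\chi(X,\omega_X)=1$), $Y$ is of general type precisely when $a_X\colon X\to Y$ is birational, and $Y$ fails to be of general type precisely when $f$ has degree $>1$ (the expected degree being $2$). I would treat these two cases separately; the base of the induction is the surface case $n=2$, $p_g=q=3$, handled in \cite{HP-surf,pirola}.

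In the general-type case $a_X$ is birational onto $Y$, and I would run the structural analysis of Section~\ref{sec:subvA}. A codimension-$(i+1)$ component $\PB$ of $V^i(\omega_X,a_X)$ gives, via Setting~\ref{set:c1} and Propositions~\ref{prop:base=1} and \ref{prop:fiber=1}, a fibration $\beta\colon X\to X_B$ whose general fibre $X_b$ has $\dim X_b=i$, $\chi(X_b,\omega_{X_b})=1$ and $q(X_b)=i+1$; thus $X_b$ is a divisor of codimension $1$ in its own Albanese variety, and Hacon's criterion (together with Proposition~\ref{prop:hpTech}, which replaces the smooth-in-codimension-$1$ hypothesis of Corollary~\ref{cor:divisorcase}) identifies $X_b$ birationally with a theta divisor. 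Since $q(X_B)=2\dim X_B-1$ by Proposition~\ref{prop:base=1}, the irregularity inequality $q(X)-q(X_B)>\dim X-\dim X_B$ holds, so Proposition~\ref{prop:fibertheta} splits off $X_b$ as a factor and the induction hypothesis applies to $X_B$. A product of $m$ theta divisors of total dimension $n$ has irregularity $n+m$, so $q(X)=2n-1$ forces $m=n-1$; a partition of $n$ into $n-1$ positive parts has exactly one part equal to $2$ and the rest equal to $1$. Hence $X$ is birational to $n-2$ genus-$2$ curves times one two-dimensional theta divisor, i.e.\ the symmetric square $C^{(2)}$ of a genus-$3$ curve. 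This is case~(i).

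The delicate case is when $Y$ is not of general type. Then $a_{X*}\omega_X$ is no longer M-regular (Lemma~\ref{lem:hypo3}), so neither Lemma~\ref{lem:sof} nor the clean component analysis underlying Theorem~\ref{thm:sm-in-codim1-sav} is available, and this is where I expect the main obstacle to lie. Here I would feed the situation into Proposition~\ref{prop:hpTech}: the Fourier--Mukai transform should detect a nontrivial torsion point $P\in\PA_X$ whose associated \'etale double cover trivialises the failure of general type, showing that $f$ has degree exactly $2$ and producing a covering involution. The crux is then to globalise this involution to a diagonal involution $\tau$ on a product, yielding a presentation $X\sim(C_1\times\widetilde Z)/\langle\tau\rangle$; one identifies $C_1$ as a bielliptic genus-$2$ curve (the origin of the non-general-type locus) and $\widetilde Z\to C_1\times\cdots\times C_{n-1}$ as an \'etale double cover of a product of genus-$2$ curves, the latter product being split off by the theta-divisor fibration technique of the previous paragraph together with the induction hypothesis. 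Verifying that $\tau$ acts by the two corresponding deck involutions and that the construction accounts for the whole of $X$ completes case~(ii), and the principal difficulty throughout is reconstructing this double-cover structure from homological data rather than from geometry of the Albanese image.
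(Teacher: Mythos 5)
Your overall architecture coincides with the paper's: the same dichotomy according to whether $Y=a_X(X)$ is of general type, with the general-type case yielding a product of theta divisors (this is Proposition~\ref{prop:2n-1A} in the paper), the other case yielding $(C_1\times\widetilde Z)/\langle\tau\rangle$, induction on $\dim X$ with the surface case \cite{HP-surf,pirola} as base, and Proposition~\ref{prop:hpTech} as the engine. However, there are genuine gaps in both halves. In the general-type case you invoke Proposition~\ref{prop:fiber=1} for the fibration $\beta\colon X\to X_B$ as if it were always available, but that proposition requires $Y_B$ to be of general type, and nothing guarantees this a priori --- this is precisely the difficulty the paper singles out. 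The paper first applies Proposition~\ref{prop:hpTech} to $\beta_*(\omega_X\otimes Q)$ to bound $i\leq 2$ (since $\dim X_B=n-i$ and $q(X_B)=2n-2-i$); for $i=2$ equality holds, $X_B$ is a product of genus-$2$ curves and everything goes through, while for $i=1$ with $Y_B$ not of general type one must pass to a further quotient $B'=B/K$ and apply Proposition~\ref{prop:hpTech} again to force $\dim K=1$ before identifying the fibre of $\beta$. Relatedly, your assertion that $q(X_B)=2\dim X_B-1$ follows from Proposition~\ref{prop:base=1} is off: that proposition computes Euler characteristics, and the numerology $q(X_B)=2n-2-i$ gives $2\dim X_B-1$ only when $i=1$; when $i=2$ one has $q(X_B)=2\dim X_B$, so it is Hacon--Pardini's maximal-irregularity theorem, not the induction hypothesis of the present statement, that applies to $X_B$.

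The larger gap is case (ii), which you yourself flag as the main obstacle. Saying that the Fourier--Mukai transform ``should detect a nontrivial torsion point whose \'etale double cover trivialises the failure of general type'' describes the expected answer rather than producing it. The paper's mechanism is concrete: let $K$ be the maximal abelian subvariety with $Y+K=Y$ and $\kappa\colon X\to Z$ the Stein factorization of $X\to Y/K$; then $\dim Z=n-\dim K$ and $q(Z)=2n-1-\dim K$, so Proposition~\ref{prop:hpTech} forces $\dim K=1$ with equality in $q(Z)\leq 2\dim Z$, whence $Z$ is a product of $n-1$ genus-$2$ curves. The torsion point arises from a \emph{second} application of Proposition~\ref{prop:hpTech}, to the complementary summand $\cG$ of $\omega_Z$ in $\kappa_*\omega_X$ (which has $\chi(Z,\cG)=1$), giving $\cG=\omega_Z\otimes Q$ with $Q\in\PAZ$ torsion of some order $d$; it lives on the base $Z$, not on $X$. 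Only then does the \'etale cover $\widetilde Z\to Z$ enter, the splitting $\widetilde X\sim C_1\times\widetilde Z$ following from Proposition~\ref{prop:fibertheta} after the explicit computation $q(\widetilde X)=q(\widetilde Z)+2$, and the final constraints $q(X)=q(Z)+1$, $g(C_1)=2$ force $C_1/G$ elliptic and $d=2$. None of these steps appears in your sketch, so case (ii) is not yet proved.
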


The next natural step would be to classify varieties satisfying \Co\ and $q(X)=2\dim X -2$, but we note that this classification would include the classification of surfaces $S$ with $q(S)=p_g(S)=2$, which is not complete.

\medskip

To prove the previous theorem, we need the following general proposition, which can be seen as a variant of the main Theorem in \cite[Thm.~3.1]{HP-g2}, but it is also the main technical tool to prove Theorem \ref{thm:2n-1main}.
\begin{prop}\label{prop:hpTech}
	Let $f: V\to Z$ be a surjective morphism between smooth projective varieties, where $Z $ is of maximal Albanese dimension.
	Let $\cF$ be a direct summand of $f_*\omega_V$, such that $\chi(Z, \cF)=1$.
	Then,
	\begin{equation*}
		q(Z)\leq 2\dim Z.
	\end{equation*}
	Moreover, if equality holds, then
	\begin{enumerate}
		\item $Z$ is birational to a product of smooth curves of genus $2$;
		\item the Albanese morphism factors through $a_Z: Z\xrightarrow{g} C_1\times\cdots\times C_{m}\hookrightarrow A_Z$,	where $m=\dim Z$ and
	\begin{equation*}
		g_*\cF=(\omega_{C_1}\boxtimes\cdots \boxtimes \omega_{C_{m}})\otimes Q,
	\end{equation*}
	for some $Q\in\PAZ$.
	\end{enumerate}
\end{prop}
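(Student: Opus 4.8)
The plan is to follow closely the strategy of Hacon--Pardini for \cite[Thm.~3.1]{HP-g2}, replacing the sheaf $\omega_Z$ by the summand $\cF$ throughout, and to organize the argument as an induction on $n:=\dim Z$. Write $A:=A_Z$, $g:=q(Z)=\dim A$, and let $a_Z\colon Z\to A$ be the Albanese morphism. Since $\cF$ is a direct summand of $f_*\omega_V$ and $Z$ is of maximal Albanese dimension, the generic vanishing theorem for higher direct images \cite[Thm.~2.2]{HP-g2} shows that $(a_Z\circ f)_*\omega_V=a_{Z*}f_*\omega_V$ is a $GV$-sheaf on $A$; as $V^i(\cF,a_Z)\subseteq V^i(f_*\omega_V,a_Z)$, the sheaf $\cF$ is itself $GV$ with respect to $a_Z$. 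In particular $\codim_{\PAZ}V^i(\cF,a_Z)\geq i$ for all $i$, and because $\chi(Z,\cF)=1>0$ a general $P\in\PAZ$ satisfies $h^0(Z,\cF\otimes a_Z^*P)=1$ and $h^{>0}(Z,\cF\otimes a_Z^*P)=0$. Dually, the Fourier--Mukai transform of $\cF$ is a torsion-free sheaf on $\PAZ$ of generic rank $\chi(Z,\cF)=1$; it is this rank-$1$ structure that ultimately forces the theta-divisor behaviour of the fibres below.

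For the inequality $q(Z)\le 2n$ I would argue by induction on $n$. If every $V^i(\cF,a_Z)$ with $i>0$ has codimension $>i$, then $\cF$ is M-regular, $Z$ is of general type, and the derivative complex over $\mathbb{P}(H^1(A,\cO_A))$ degenerates exactly as in the proof of Theorem \ref{thm:pareschi}, exhibiting $Z$ as birational to a theta divisor, so that $g=n+1\le 2n$. Otherwise $V^1(\cF,a_Z)$ has a positive-dimensional component; by Lemma \ref{lem:V^1} we may take one of the form $P+\PB$ of codimension $i+1$ with $i\geq 1$, and place ourselves in the fibration $\beta\colon Z\to Z_B$ attached to the quotient $A\to B$ as in Setting \ref{set:c1}. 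The technical core, which I expect to be the main obstacle, is to reprove the rank computations of Propositions \ref{prop:base=1} and \ref{prop:fiber=1} with $\omega_Z$ replaced by the summand $\cF$: one forms the relative derivative complex over $\mathbb{P}(W)$ for a transversal $W$ of dimension $i+1$, and one invokes the Hodge-theoretic injection $\bigwedge^iV\otimes\cO\hookrightarrow\beta_*\omega_{Z/Z_B}$ of \cite[Cor.~3.11]{mor} to force the left-hand twist $j$ to equal $i+1$. This yields that the general fibre $Z_b$ has $\dim Z_b=i$, $\chi(Z_b,\omega_{Z_b})=1$, and $q(Z_b)=i+1$, while the relevant summand on $Z_B$ again has Euler characteristic $1$.

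Since $Z_b$ has dimension $i$, irregularity $q(Z_b)=i+1$ and $\chi=1$, Hacon's criterion \cite[Cor.~3.4]{Hac-theta} (exactly as in Theorem \ref{thm:pareschi}) identifies $Z_b$ birationally with a theta divisor. Applying the induction hypothesis to the base gives $q(Z_B)\le 2(n-i)$, and combining this with the standard fibration bound $q(Z)\le q(Z_B)+q(Z_b)$ (a holomorphic $1$-form restricts to a monodromy-invariant form on the fibre, and the kernel of restriction is pulled back from $Z_B$) we obtain
\[
q(Z)\le 2(n-i)+(i+1)=2n-i+1\le 2n,
\]
the last inequality using $i\geq 1$. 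Equality therefore forces $i=1$, $q(Z_B)=2(n-1)$ and $q(Z_b)=2$; that is, every such fibre is a smooth curve of genus $2$ and the base attains its own maximum.

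Finally, for the equality case I would peel off genus-$2$ factors one at a time. When $q(Z)=2n$ we have $q(Z)-q(Z_B)=2>1=\dim Z-\dim Z_B$, so Proposition \ref{prop:fibertheta} applies and $Z$ is birational to $C_1\times Z_B$ with $C_1$ of genus $2$. Tracking $\cF$ through this product, and using that its Fourier--Mukai transform has rank $1$ so that $\cF$ is itself of generic rank $1$ with $\res{\cF}{C_1}=\omega_{C_1}$, shows that $\cF$ splits as $\omega_{C_1}\boxtimes\cF_B\otimes Q$, where $\cF_B$ is a summand of the same type on $Z_B$ with $\chi(Z_B,\cF_B)=1$ and $q(Z_B)=2\dim Z_B$. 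By induction $Z_B$ is birational to a product of $n-1$ smooth curves of genus $2$ and $\cF_B$ is the corresponding external product of canonical bundles up to a torsion twist; hence $Z$ is birational to $C_1\times\cdots\times C_n$ (with $n=\dim Z$, the $m$ of the statement), the Albanese factors accordingly, and $g_*\cF=(\omega_{C_1}\boxtimes\cdots\boxtimes\omega_{C_n})\otimes Q$. The two points demanding the most care are the summand version of the rank Lemmas \ref{prop:base=1}--\ref{prop:fiber=1} and the verification that the splitting of $\cF$ over the product is compatible at each inductive step.
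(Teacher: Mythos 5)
Your overall architecture (induction on $\dim Z$, reduction to a fibration $\beta\colon Z\to Z_B$ attached to a codimension-$(j+1)$ component of some $V^j(\cF)$, and peeling off genus-$2$ factors via Proposition \ref{prop:fibertheta}) matches the paper's, but the case analysis that launches the induction has a genuine gap. Your dichotomy is ``$\cF$ is M-regular'' versus ``$V^1(\cF,a_Z)$ has a positive-dimensional component of codimension $i+1$ with $i\geq1$'', and in the first branch you claim the derivative complex exhibits $Z$ as birational to a theta divisor. That is false: take $Z=A_Z$ an abelian variety with a principal polarization $\Theta$ and $f\colon V\to Z$ the double cover branched along a smooth member of $\abs{2\Theta}$; then $\cF=\cO_Z(\Theta)$ is an M-regular direct summand of $f_*\omega_V$ with $\chi(Z,\cF)=1$, yet $Z$ is not birational to a theta divisor and $q(Z)=\dim Z$, not $\dim Z+1$. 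M-regularity does not make $0$ an isolated point of $V^1(\cF)$, and Hacon's criterion is a statement about $\omega_X$, not about an arbitrary summand. The second branch is also not the complement of the first: when $a_Z(Z)$ is fibred by positive-dimensional abelian subvarieties, the failure of M-regularity is witnessed by a codimension-$i$ component of some $V^i$, and no codimension-$(i+1)$ component of $V^1$ need exist, so your fibration never gets started. The paper supplies exactly the two ingredients you are missing: a separate Step 1 quotienting by the maximal abelian subvariety stabilizing $a_Z(Z)$ (which always yields strict inequality by induction), and, in the general-type case where $a_{Z*}\cF$ is M-regular by Lemma \ref{lem:hypo3}, the Pareschi--Popa Fourier--Mukai argument writing $\widehat{\mathbf{R}\Delta\cF}=M\otimes\cI_W$ with $W\neq\emptyset$, whose non-reflexivity is what actually produces a codimension-$(j+1)$ component of $V^j(\cF)$ for some $j>0$.

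Even granting the fibration, your route through the fibre is both unproven and avoidable. You defer the ``summand versions'' of Propositions \ref{prop:base=1} and \ref{prop:fiber=1} as the technical core, but those statements concern $\omega_X$ with $\chi(X,\omega_X)=1$, and there is no reason the general fibre $Z_b$ should be a theta divisor when only $\chi(Z,\cF)=1$ is assumed; moreover ``the Fourier--Mukai transform of $\cF$ has rank $1$, so $\cF$ has generic rank $1$'' is a non sequitur, since the rank of the transform computes $\chi$, not the rank of $\cF$. The paper sidesteps the fibre entirely for the inequality: it uses only $\dim Z_B=\dim Z-j$, $q(Z_B)=q(Z)-j-1$, and $\chi(Z_B,\beta_*(\cF\otimes Q))=1$ for a general torsion point $Q$, and then induction on the base gives $q(Z)\leq 2\dim Z+1-j$. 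In the equality case ($j=1$) the fibre is identified as a genus-$2$ curve not via the derivative complex but via weak positivity of $\cF\otimes\omega_Z^{-1}$ restricted to the fibre curve, which forces $\rk\cF=1$ and $g=2$ simultaneously; and the identification $g_*\cF\cong(\omega_{C_1}\boxtimes\cdots\boxtimes\omega_{C_m})\otimes Q$ requires the explicit exact sequence built from the partial projections in the paper's Step 4, not merely ``tracking $\cF$ through the product''.
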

\begin{proof}
We argue by induction on $\dim Z$.

If $\dim Z=1$, then $Z$ is a smooth projective curve of genus $\geq 1$.
Since $\cF$ is a direct summand of $f_*\omega_V$, we know that $\cF\otimes\omega_Z^{-1}$ is a nef vector bundle on $Z$ (see for instance \cite[Cor.~3.6]{V}).
From $\chi(Z, \cF)=1$, we deduce by Riemann-Roch that $g(Z)\leq 2$ and, if $g(Z)=2$, then $\cF=\omega_Z\otimes Q$ for some $Q\in\PAZ$.

By induction, we may assume the proposition holds for $\dim Z\leq m-1$.

\medskip
\noindent{\bf Step 1.} Suppose that $a_Z(Z)$ is fibred by positive dimensional abelian varieties.

	Then, let $K$ be the maximal abelian subvariety of $A_Z$, such that $a_Z(Z)+ K=a_Z(Z)$.
	We consider the quotient $A_Z\to A_Z/K$ and $Z\stackrel{\kappa}{\to} Z' \to A_Z/K$ the Stein factorization of $Z\to A_Z/K$.
	After birational modifications and abusing notation, we have the following commutative diagram of smooth projective varieties
	\begin{equation*}
	\xymatrix{
	Z\ar[r]^{a_Z}\ar[d]_{\kappa} & A_Z\ar[d]\\
	Z'\ar[r]^{a_{Z'}} & A_{Z}/K,}
	\end{equation*}
	where $A_{Z'}=A_{Z}/K$ and $a_{Z'}$ is indeed the Albanese morphism of $Z'$.

	Since $Z$ is of maximal Albanese dimension, so is the general fiber of $\kappa$.
	Hence, for $Q\in\PAZ$ a general torsion point and $i\geq 1$, $\rk R^i\kappa_*(f_*\omega_V\otimes Q)= 0$, by generic vanishing on the fibres of $\kappa$.
	By Koll\'ar's Theorem \cite[Thm.~2.1]{ko1}, the sheaves $R^i\kappa_*(f_*\omega_V\otimes Q)$ are torsion-free, hence zero for all $i>0$.
	We denote by $\cF'=\kappa_*(\cF\otimes Q)$.
	Since $R^i\kappa_*(\cF\otimes Q)=0$ for all $i\geq 1$, then $\chi(Z', \cF')=\chi(Z,\cF\otimes Q)=\chi(Z, \cF)=1$.

	Let $V'$ be a connected component of the \'etale cover of $V$ induced by $f^*Q$.
	Let $f': V'\to Z'$ be the natural surjective morphism.
	Then, $\cF'$ is clearly a direct summand of $f'_{*}\omega_{V'}$.
	Hence by induction on the dimension of $Z$, we have $q(Z')\leq 2\dim Z'$ and we get $q(Z)\leq 2\dim Z-\dim K<2\dim Z$.

\medskip
\noindent{\bf Step 2.} Suppose that $a_Z(Z)$ is not fibred by positive dimensional abelian varieties.

	We consider the Fourier-Mukai functors
	\begin{align*}
	\R\Phi_\cP : \Db(A_Z) \to \Db(\PAZ), &\quad \R\Phi_\cP(\cdot) : = \R{p_{\PAZ}}_* ( p_{A_Z}^*(\cdot) \otimes \cP),\\
	\R\Psi_\cP : \Db(\PAZ) \to \Db(A_Z), &\quad \R\Psi_\cP(\cdot) : = \R{p_{A_Z}}_* ( p_{\PAZ}^*(\cdot) \otimes \cP),
	\end{align*}
	where $\cP$ is the Poincar\'{e} line bundle on $A_Z\times\PAZ$.

	Since $a_Z(Z)$ is not fibred by positive dimensional abelian varieties, by Lemma \ref{lem:hypo3}{\it (iv)}, $a_{Z*}\cF$ is M-regular.
	Then, by \cite[Thm~2.2 and Cor.~3.2]{PP1}, the following element in $\Db(\PAZ)$,
	\begin{equation*}\widehat{\mathbf{R}\Delta\cF}:=\mathbf{R}\Phi_{\cP}(\mathbf{R}\mathscr{H}om_{A_Z}(a_{Z*}\cF, \cO_{A_Z}))[q(Z)]
	\end{equation*}
	is indeed a torsion-free coherent sheaf in cohomological degree $0$ and of rank $\chi(Z, \cF)=1$.
	Hence, $\widehat{\mathbf{R}\Delta\cF}=M\otimes \cI_W$, where $M$ is a line bundle on $\PAZ$ and $\cI_W$ is the ideal sheaf of $W\subset\PAZ$.

	If $W=\emptyset$, \ie\ $\widehat{\mathbf{R}\Delta\cF}=M$ is a line bundle on $\PAZ$, then $\mathbf{R}\Psi_{\cP}(M)$ is supported on a translate of an abelian subvariety of $A_Z$.
	By Mukai's duality (\cite[Thm.~2.2]{Mu}),
	\begin{equation*}\mathbf{R}\Psi_{\cP}(M)=(-1)_{A_Z}^*\mathbf{R}\mathscr{H}om_{A_Z}(a_{Z*}\cF, \cO_{A_Z}).\end{equation*}
	Thus, $a_{Z*}\cF$ is also supported on a translate of an abelian subvariety of $A_Z$.
	Since $a_{Z*}\cF$ is supported on $a_Z(Z)$ and $a_Z(Z)$ is not fibred by positive dimensional abelian varieties by hypothesis, we get a contradiction.

	We can suppose then that $W$ is not empty.
	That is, $\widehat{\mathbf{R}\Delta\cF}$ is not reflexive so, by \cite[Cor.~3.2]{PP1}, there exists $j>0$
	such that $V^j(\cF)$ has a component $P+\PB$ of codimension $j+1$ in $\PAZ$.

	As usual, we consider $Z\stackrel{\beta}\to Z_B\to B$, the Stein factorization of $Z\to B$.
	After birational modifications and abusing notation, we have the following commutative diagram of smooth projective varieties
	\begin{equation*}
	\xymatrix{
	Z\ar[r]^{a_Z} \ar[d]_{\beta}& A_Z\ar[d]\\
	Z_B\ar[r]^{a_{Z_B}} & B,}
	\end{equation*}
	where $a_{Z_B}:Z_B\to B=A_{Z_B}$ is indeed the Albanese morphism of $Z_B$.

	Note that $\dim Z_B=\dim Z-j$ and $q(Z_B)=\dim B=q(Z)-j-1$.
	As before, for $Q\in \PAZ$ a general torsion point, we have $\chi(Z_B, \beta_*(\cF\otimes Q))=1$.
	Then, considering $\cF_B:=\beta_*(\cF\otimes Q)$, by induction on dimensions, we have $q(Z_B)\leq 2\dim Z_B$.
	Hence, $q(Z)\leq 2\dim Z+1-j\leq 2\dim Z$.

\medskip

We have already proved that $q(Z)\leq 2\dim Z$ in any case, and now we want to study what happens when equality holds.

\medskip
\noindent{\bf Step 3.} The variety $Z$ when equality $q(Z)=2\dim Z$ holds.

	We have seen that in Step 1, we always have an strict inequality.
	In Step 2, equality can hold only when $j=1$ and by induction on dimensions, we may assume $Z_B$ is isomorphic to a product of smooth curves of genus $2$ and $\cF_Q=\beta_*(\cF\otimes Q)=\omega_{Z_B}\otimes Q_B$ for some $Q_B\in\PB$.

	Since $\cF$ is a direct summand of $f_*\omega_V$, $\cF\otimes\omega_Z^{-1}$ is weakly positive (see \cite[Thm.~III]{V1}).
	Thus, $\res{(\cF\otimes \omega_Z^{-1}\otimes Q)}{C}$ is nef, where $C$ is a general fiber of $\beta$.
	Since $\rk \cF_Q=1$, we have $\chi\left(C,\res{(\cF\otimes \omega_Z^{-1}\otimes Q)}{C}\right)=1$, and we conclude that $g(C)=2$ and $\rk \cF=1$.
	Thus, by Proposition \ref{prop:fibertheta}, $Z$ is birational to $C\times Z_B$ and, hence, it is birational to a product of genus $2$ curves.

\medskip
\noindent{\bf Step 4.} The sheaf $\cF$ when equality $q(Z)=2\dim Z$ holds.

	Without loss of generality we may assume that $Z=C_1\times \cdots\times C_m$.
	Denote the natural projections by
	\begin{align*}
		{p}_1: Z&\to C_2\times\cdots\times C_m,\\
		{p_2}: Z&\to C_1\times C_3\times\cdots\times C_m,\text{ and}\\
		{p}_{12}: Z&\to C_3\times\cdots\times C_m.
	\end{align*}
	Fix a general $Q\in\PAZ$ and consider the natural sequence
	\begin{equation}\label{eqn:natseq}
	0\to {p}_{12}^*{p}_{12*}^{}(\cF\otimes Q)\to \begin{smallmatrix}
			{p}_{1}^*{p}_{1*}^{}(\cF\otimes Q)\\
			\oplus\\
			{p}_{2}^*{p}_{2*}^{}(\cF\otimes Q)
	 \end{smallmatrix}\stackrel{\phi}\to \cF\otimes Q.
	\end{equation}
	Note that ${p}_{12}^*{p}_{12*}^{}(\cF\otimes Q)$ lies naturally in the kernel of $\phi$.

	By induction hypothesis and comparing the elements in $\Pic^0(C_k)$ on further push-forwards, we have
	\begin{align*}
		{p}_{1*}(\cF\otimes Q)&=(\omega_{C_2}\otimes Q_2)\boxtimes\left((\omega_{C_3}\boxtimes \cdots\boxtimes\omega_{C_m})\otimes Q_{12}\right),\\
		{p}_{2*}(\cF\otimes Q)&=(\omega_{C_1}\otimes Q_1)\boxtimes\left((\omega_{C_3}\boxtimes\cdots\boxtimes\omega_{C_m})\otimes Q_{12}\right),\text{ and}\\
		{p}_{12*}(\cF\otimes Q)&=(\omega_{C_3}\boxtimes\cdots\boxtimes\omega_{C_m})\otimes Q_{12},
	\end{align*}
	for some $Q_l\in \Pic^0(C_l)$ and $Q_{12}\in \Pic^0(\prod_{k\neq 1,2} C_k)$ that depend on $Q$.
	Now, since $\ker \phi$ has rank $1$, it is easy to see that the sequence \eqref{eqn:natseq} is exact.
	Moreover, if we choose $Q$ appropriately, then
	\begin{equation*}
	0\to \cO_{C_1\times C_2}\to \begin{smallmatrix}
			\cO_{C_1}\boxtimes(\omega_{C_2}\otimes Q_2)\\
			\oplus\\
			(\omega_{C_1}\otimes Q_1)\boxtimes \cO_{C_2}
	 \end{smallmatrix}\to (\omega_{C_1}\otimes Q_1)\boxtimes (\omega_{C_2}\otimes Q_2)\to \cO_x\to 0
	\end{equation*}
	is exact, where $x$ is a point in $C_1\times C_2$ that depends on $Q_1$ and $Q_2$.
	Thus, we have a non-trivial morphism
	\begin{equation*}(\omega_{C_1}\otimes Q_1) \boxtimes (\omega_{C_2}\otimes Q_2)\boxtimes ((\omega_{C_3}\boxtimes \cdots\boxtimes\omega_{C_m})\otimes Q_{12})\longrightarrow \cF\otimes Q.\end{equation*}
	Both sheaves are torsion-free of $\rk=1$ and $\chi=1$, so they are isomorphic (e.g., \cite[Lem.~1.12]{pareschi}).
\end{proof}

Our aim is to apply induction on the dimension to prove Theorem \ref{thm:2n-1main}.
By Theorem \ref{thm:pareschi}, we will be able to reduce to Setting \ref{set:c1}.
But now we are assuming that $a:X\to A$ is the Albanese morphism and we get some extra properties.

\begin{lemm}[Extra properties of Setting \ref{set:c1}] \label{lem:propsett&alb}
	With the same notations as in Setting \ref{set:c1}.
	Assume that $a:X\to A$ is the Albanese morphism $a_X$.

	If $Y=a_X(X)$ is of general type, then the morphism $a_{B}:X_B\stackrel{f_B}\to Y_B\hookrightarrow B$ is the Albanese morphism $a_{X_B}$ of $X_B$.
	Moreover, if $q(X)\geq \dim X +2$, then $Y_B$ is not an abelian subvariety.
\end{lemm}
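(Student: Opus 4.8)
The plan is to compare the Albanese variety of $X_B$ with $B$ through the maps already present in Setting~\ref{set:c1}. Since $a=a_X$ we have $A=A_X$ and $B=A_X/K$, where $K=\ker\pi_B$ is a connected abelian subvariety of dimension $i+1$ (because $\PB$ has codimension $i+1$ in $\PA$), so $\dim B=q(X)-i-1$. First I would record that the fibration $\beta\colon X\to X_B$ has connected fibres, so that $\beta^*$ is injective on $\Pic^0$ and the induced homomorphism $\beta_*\colon A_X\to A_{X_B}$ is surjective with connected kernel. The universal property of the Albanese morphism then factors $a_B$ as $a_B=h\circ a_{X_B}$ for some homomorphism $h\colon A_{X_B}\to B$, and comparing the two ways of mapping $X$ to $B$ (using that $a_X(X)$ generates $A_X$) gives $h\circ\beta_*=\pi_B$, hence $\ker\beta_*\subseteq K$. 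Thus $a_B$ is the Albanese morphism precisely when $\ker\beta_*=K$.

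The crux is therefore to prove $\ker\beta_*=K$, and this is where the general type hypothesis enters. I would look at a general fibre $X_b$ of $\beta$ and its image $Y_b=a_X(X_b)$. Since $\beta_*\circ a_X=a_{X_B}\circ\beta$ is constant on $X_b$, the subvariety $Y_b$ sits inside a single translate of $\ker\beta_*$; on the other hand $a_X$ is generically finite, so $\dim Y_b=\dim X_b\geq i$ by \cite[Thm.~0.2]{GL2}. This forces $\dim\ker\beta_*\geq i$, and combined with $\ker\beta_*\subseteq K$ we are left with only two possibilities, $\dim\ker\beta_*=i$ or $\dim\ker\beta_*=i+1$. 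The dangerous case $\dim\ker\beta_*=i$ would make $Y_b$ an $i$-dimensional irreducible subvariety filling up an $i$-dimensional translate of the abelian subvariety $\ker\beta_*$, i.e.\ a translate of $\ker\beta_*$ itself; letting $b$ vary, this would exhibit $Y$ as fibred by the positive-dimensional abelian variety $\ker\beta_*$, contradicting that $Y$ is of general type (Lemma~\ref{lem:hypo3}). Hence $\dim\ker\beta_*=i+1=\dim K$, and since both are connected abelian subvarieties with $\ker\beta_*\subseteq K$ they coincide, so $h$ is an isomorphism and $a_B=a_{X_B}$. In particular this pins down $\dim X_B=\dim X-i$ and $q(X_B)=\dim B=q(X)-i-1$.

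For the final assertion I would argue by a dimension count, now that $a_B$ is known to be the Albanese morphism. The image $Y_B=a_{X_B}(X_B)$ generates $B$, and since $a_{X_B}$ is generically finite $\dim Y_B=\dim X_B=\dim X-i$. Under the hypothesis $q(X)\geq \dim X+2$ this gives $\dim Y_B=\dim X-i<q(X)-1-i=\dim B$, so $Y_B$ is a proper subvariety of $B$. As any abelian subvariety (or translate of one) that generates $B$ must be all of $B$, the proper generating subvariety $Y_B$ cannot be an abelian subvariety, which is the claim. The only real obstacle is the middle step, ruling out $\dim\ker\beta_*=i$: this is exactly the point where one must convert the general type of $Y$ into the rigidity forbidding the fibres $Y_b$ from degenerating to abelian subvarieties.
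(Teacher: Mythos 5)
Your proof is correct and follows essentially the same route as the paper: both arguments identify $A_{X_B}$ with $B$ by showing that the kernel of $A_X\to A_{X_B}$ equals $K=\ker\pi_B$, using that $Y$ of general type is not fibred by positive-dimensional abelian subvarieties (Lemma~\ref{lem:hypo3}) to rule out the degenerate case, and then conclude the second assertion by the same dimension count. The paper phrases the key step as ``$Y_b$ is an ample divisor in a translate of $K$'' (the argument of Lemma~\ref{lem:propsett1}), which is exactly the fact your case analysis re-derives, so the two proofs differ only in the level of detail.
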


\begin{proof}
	Note that both $\beta$ and $\pi_B$ are fibrations and $Y_b$ is an ample divisor in a translate of the kernel of $\pi_B$.
	Hence, $a_B$ is the Albanese morphism of $X_B$ and $Y_B\hookrightarrow B$ is the Albanese image of $X_B$.
	If $q(X)\geq \dim X+ 2$, then $\dim B\geq \dim Y_B+1$, so $Y_B$ is not an abelian subvariety of $B$.
\end{proof}

\medskip

Now, we have all the tools to attack Theorem \ref{thm:2n-1main}.

First, we add Condition \eqref{hypo3}, and we get that only case {\it(i)} is possible.
\begin{prop}\label{prop:2n-1A}Let $X$ be a smooth projective variety satisfying \Con\ and $q(X)=2\dim X-1$.
	Then $X$ is birational to a product of theta divisors.
\end{prop}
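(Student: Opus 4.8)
The plan is to argue by induction on $n=\dim X$, with base case $n=2$. When $n=2$ we have $\chi(X,\omega_X)=1>0$, so $X$ is of general type by Remark \ref{rem:chi>0-Xgt}{\it(i)}, and $X$ is a surface with $p_g=q=3$; the classifications of \cite{HP-surf,pirola} apply, and condition \eqref{hypo3}, through Lemma \ref{lem:sof} (which makes $a_X$ birational onto its image), rules out the ``second type'' of surface, leaving $X$ birational to the symmetric square of a genus $3$ curve, that is, a $2$-dimensional theta divisor.

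For the inductive step, first note that \eqref{hypo3} and Lemma \ref{lem:sof} make $f\colon X\to Y=a_X(X)$ birational. If $V^i(\omega_X,a_X)$ has no codimension-$(i+1)$ component for any $0<i<n$, then Theorem \ref{thm:pareschi} gives that $Y$ is birational to a theta divisor and $A_X\to A$ is an isogeny, so $q(X)=n+1$; together with $q(X)=2n-1$ this forces $n=2$, already treated. Otherwise I fix such a component $P+\PB$, of codimension $i+1$, and place myself in Setting \ref{set:c1}, with $\beta\colon X\to X_B$ the associated fibration and general fibre $X_b$. Since $Y$ is of general type, Lemma \ref{lem:propsett&alb} identifies $a_B$ with the Albanese morphism of $X_B$, so $q(X_B)=\dim B=q(X)-(i+1)=2n-i-2$ while $\dim X_B=n-i$. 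Proposition \ref{prop:base=1} produces a summand $\cF_B=R^i\beta_*(\omega_X\otimes a^*P)$ of a pushforward of a canonical sheaf with $\chi(X_B,\cF_B)=1$, so Proposition \ref{prop:hpTech} yields $q(X_B)\le 2\dim X_B$; this reads $2n-i-2\le 2(n-i)$, forcing $i\in\{1,2\}$.

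The heart of the argument, and the step I expect to be the main obstacle, is to show that $Y_B$ is of general type, since this is exactly what feeds Proposition \ref{prop:fiber=1} and yields $\chi(X_b,\omega_{X_b})=1$ and $q(X_b)=i+1$. When $i=2$ this is automatic: equality holds in Proposition \ref{prop:hpTech}, so $X_B$ is birational to a product of genus $2$ curves and its Albanese image $Y_B$ is of general type. When $i=1$ this is the delicate point: if $Y_B$ were not of general type, the refined bound from Step 1 of the proof of Proposition \ref{prop:hpTech}, applied to $(X_B,\cF_B)$, would give $q(X_B)\le 2\dim X_B-s$, where $s\ge 1$ is the dimension of the maximal subtorus $K_B$ stabilising $Y_B$; since $q(X_B)=2\dim X_B-1$ this forces $s=1$. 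But then the coarser fibration of $X$ over the general-type base $A_{X_B}/K_B$ has a surface fibre that is an ample divisor of a $3$-dimensional abelian variety, i.e. it realises $X$ in the $i=2$ situation, which is already handled. Hence I may always assume $Y_B$ is of general type.

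Granting this, Proposition \ref{prop:fiber=1} gives $\chi(X_b,\omega_{X_b})=1$ and $q(X_b)=i+1$, and by Lemma \ref{lem:propsett1}{\it(ii)} the fibre $X_b$ is birational to $Y_b$, an irreducible ample divisor in a translate of $K=\ker(A_X\to B)$; as the stabiliser of an ample divisor is finite, Lemma \ref{lem:hypo3} shows $Y_b$ is of general type, so $X_b$ satisfies \eqref{hypo1}--\eqref{hypo3} with $q(X_b)=\dim X_b+1$. For $i=1$, $X_b$ is a genus $2$ curve; for $i=2$, $X_b$ is a surface with $p_g=q=3$ and general-type Albanese image, hence by the base case birational to $C^{(2)}$ for a genus $3$ curve. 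In either case $X_b$ is birational to a theta divisor, and since $q(X)-q(X_B)=i+1>i=\dim X-\dim X_B$, Proposition \ref{prop:fibertheta} shows $X$ is birational to $X_b\times X_B$. When $i=2$ we already know $X_B$ is a product of genus $2$ curves, so $X$ is a product of theta divisors. When $i=1$ the product decomposition yields $\chi(X_B,\omega_{X_B})=1$ and $q(X_B)=2n-3=2\dim X_B-1$, and splitting off the genus $2$ factor from the general-type Albanese image of $X$ shows $X_B$ also satisfies \eqref{hypo3}; the induction hypothesis then makes $X_B$ a product of theta divisors, and so is $X$.
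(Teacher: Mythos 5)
Your overall strategy---the base case $n=2$, the reduction to Setting \ref{set:c1}, the bound $i\le 2$ via Proposition \ref{prop:hpTech}, and the splitting via Propositions \ref{prop:fiber=1} and \ref{prop:fibertheta} followed by induction---is exactly the paper's, and your treatment is correct in the cases $i=2$ and $i=1$ with $Y_B$ of general type. The genuine gap is in the remaining sub-case, $i=1$ with $Y_B$ not of general type, which you rightly single out as the delicate point. You correctly extract $s=\dim K_B=1$ from Step 1 of the proof of Proposition \ref{prop:hpTech}, but the assertion that the quotient fibration $\beta'\colon X\to X_{B'}$ (with $B'=B/K_B$) ``realises $X$ in the $i=2$ situation, which is already handled'' does not hold: the codimension-$3$ subtorus $\widehat{B'}\subset\PA_X$ is properly contained in the codimension-$2$ component $\PB$ of $V^1(\omega_X,a_X)$, hence is \emph{not} an irreducible component of $V^1(\omega_X,a_X)$. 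Your $i=2$ argument rests on Propositions \ref{prop:base=1} and \ref{prop:fiber=1}, whose proofs use the exactness of the derivative complex at an actual component of $V^1$; neither applies to $\beta'$, so this route yields neither $\chi$ of the general fibre of $\beta'$ nor its irregularity.

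What is actually needed here (and what the paper does) is to apply Proposition \ref{prop:hpTech} to $\beta'$ and $\cF'=\beta'_*(\omega_X\otimes Q)$ for $Q$ general, obtaining both that $X_{B'}$ is a product of $n-2$ curves of genus $2$ and that $\cF'$ has rank one, whence $\chi(S_{b'},\omega_{S_{b'}})=1$ for the general surface fibre $S_{b'}$ of $\beta'$. Even granting this, there remains a branch your argument does not contemplate: $q(S_{b'})$ may equal $3$ or $4$, since your phrase ``an ample divisor of a $3$-dimensional abelian variety'' constrains only the image $Y_{b'}\subset\ker(A_X\to B')$ and not the Albanese variety of $S_{b'}$; a product of two genus-$2$ curves can map birationally onto such a divisor. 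The paper disposes of the $q(S_{b'})=4$ possibility by showing that $S_{b'}\to C_{b'}$ must then be a coordinate projection, so that the general fibre of the original $\beta$ is a genus-$2$ curve, after which Proposition \ref{prop:fibertheta} and induction conclude. Both of these steps are missing from your reduction.
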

If $X$ is a product of theta divisors and $q(X)=2\dim X-1$, we clearly need to have $\dim X -1$ theta divisors of dimension $1$ (\ie\ smooth curves of genus $2$) and a theta divisor of dimension $2$ (\ie\ the $2$-dimensional symmetric product of a curve of genus $3$).

\begin{proof}[Proof of Proposition \ref{prop:2n-1A}]
	The surface case was solved independently by Hacon--Pardini \cite{HP-surf} and Pirola \cite{pirola}.
	So, we may assume that $n=\dim X\geq 3$.
	Since $\cO_X\in V^1(\omega_X)$, by Theorem \ref{thm:pareschi}, there exists $\PB$ a non-trivial irreducible component of $V^1(\omega_X)$ passing through $0$.
	Hence, we are in Setting \ref{set:c1} and we use the notation therein, in particular we recall diagram \eqref{eqn:xy1}
	\begin{equation*}
		\xymatrix{
		X_b\ar@{^{(}->}[d]\ar[r]& Y_b \ar@{^{(}->}[d] \\
		X\ar[r]\ar[d]_{\beta} & Y\ar[d]\ar@{^{(}->}[r] & A\ar[d]\\
		X_B\ar[r]& Y_B\ar@{^{(}->}[r] & B.}
	\end{equation*}

	Now, as in the proof of Theorem \ref{thm:sm-in-codim1-sav}, we would like to apply induction by using Proposition \ref{prop:fiber=1}.
	The main difficulty is that $Y_B$ may not be of general type.

	Consider $Q\in \PA_X$ a general point such that $R^i\beta_*(\omega_X\otimes Q)=0$ for all $i>0$ and we
	denote $\cF=\beta_*(\omega_X\otimes Q)$.
	Since $\chi(X,\omega_X)=1$, then $\chi(X_B, \cF)=1$.
	Denote by $i+1=\codim_{\PA_X} \PB$, then $\dim X_B=n-i$ and $q(X_B)=2n-2-i$.
	By Proposition \ref{prop:hpTech} applied to $\beta$ and $\cF$, we have $i\leq 2$.

	If $i=2$, then equality holds in Proposition \ref{prop:hpTech} and $X_B$ is birational to a product of smooth projective curves of genus $2$.
	Hence, we can indeed use Proposition \ref{prop:fiber=1} and we have $\chi(X_b,\omega_{X_b})=1$ and $q(X_b)=\dim X_b+1$.
	Hence, $A_{X_b}$ is isogenous to the kernel of $\pi_B:A_X\to B$ and $Y_b=f_b(X_b)$ is not fibred by positive dimensional abelian subvarieties.
	That is, $X_b$ satisfies the hypothesis of the proposition in dimension $2$.
	Hence by \cite[Thm.~2.2]{HP-surf} or \cite[Thm.~1.1.1]{pirola}, $X_b$ is birational to an irreducible theta divisor of a principally polarized abelian
	threefold.
	Thus, by Proposition \ref{prop:fibertheta}, we have $X$ is birational to $X_B\times X_b$ and we are done.

	If $i=1$, we see that $\dim X_B=n-1$ and $q(X_B)=2n-3$.

	If $Y_B$ is of general type, we can apply again Proposition \ref{prop:fiber=1} to show that the general fiber $X_b$ of $\beta$ is a genus $2$ curve, hence we conclude by induction on dimensions and Proposition \ref{prop:fibertheta}.

	If $Y_B$ is not of general type, then we need to go to a further quotient.
	Let $K$ be the maximal abelian subvariety of $B$ such that $Y_B+K=Y_B$.
	Denote by $i'=i+\dim K$ and denote by $B'$ the quotient $B/K$.
	Let $\pi_{B'}:A_X\to B'$ be the natural projection.
	Then, denote by $Y_{B'}:=\pi_{B'}(Y)$ and let $X\stackrel{\beta'}\to X_{B'} \stackrel{f_{B'}}\to Y_{B'}$ be Stein factorization of $X\to Y_{B'}$.
	Varieties and morphisms described above are compatible with diagram \eqref{eqn:xy1} and fit in the following extended commutative diagram:
	\begin{equation*}
		\xymatrix{
		X\ar[r]^f\ar[d]^{\beta}\ar@/_1.5pc/[dd]_{\beta'} & Y\ar[d]\ar@{^{(}->}[r] & A_X\ar[d]_{\pi_B}\ar@/^1.5pc/[dd]^{\pi_{B'}}\\
		X_B\ar[r]^{f_B}\ar[d]& Y_B\ar[d]\ar@{^{(}->}[r] & B\ar[d]\\
		X_{B'}\ar@/_1.5pc/[rr]_{a_{X_{B'}}}\ar[r]^{f_{B'}} & Y_{B'}\ar@{^{(}->}[r] & {B'}.}
	\end{equation*}
	After birational modifications and abusing notation, we assume that all varieties in the leftmost column are smooth.

	Note that, $Y_{B'}$ is of general type, so also $X_{B'}$ is also of general type.

	Note that $\dim X_{B'}=n-1-\dim K$ and $q(Z)=2n-3-\dim K$.
	Consider $Q\in \PA_X$ a general point such that $R^i\beta'_*(\omega_X\otimes Q)=0$ for all $i>0$ and we
	denote $\cF'=\beta'_*(\omega_X\otimes Q)$.
	Since $\chi(X,\omega_X)=1$, then $\chi(X_{B'}, \cF')=1$.

	By Proposition \ref{prop:hpTech} applied to $\beta'$ and $\cF'$, we have $\dim K=1$ (\ie\ $i'=2$) and $X_{B'}$ is birational to a product of $n-2$ curves of genus $2$.
	Without loss of generality, we can assume that $X_{B'}$ is isomorphic to a product of $n-2$ curves of genus $2$.

	Let $S_{b'}$ be a general fiber of $\beta'$.
	By Proposition \ref{prop:hpTech}, we also get that $\cF'$ has rank $1$, so we have $\chi(S_{b'}, \omega_{S_{b'}})=1$.
	Since $S_{b'}$ is a surface of maximal Albanese dimension and $a_{S_{b'}}(S_{b'})$ is of general type, then either $S_{b'}$ is birational to a symmetric product of curves of genus $3$ (when $q(S_{b'})=3$) or it is birational to a product of curves of genus $2$ (when $q(S_{b'})=4$).

	In the first case, we conclude by Proposition \ref{prop:fibertheta}.
	In the last case, let $C_{b'}$ be the fiber of $X_B\to X_{B'}$ corresponding to $S_{b'}$.
	Note that $C_{b'}$ is of general type, since $X_B$ is of general type.
	Then, the morphism $S_{b'}\to C_{b'}$ needs to be the projection to either $C_1$ or $C_2$.
	In any case, the general fiber of $\beta$ is a curve $C$ of genus $2$.
	By Proposition \ref{prop:fibertheta}, we have that $X$ is birational to $C\times X_B$ and we conclude by induction on the dimension of $X$.
\end{proof}

\begin{rema}
	Note that, a posteriori, we know that, in the previous proof, $X_B$ and $Y_B$ are always birational, so $Y_B$ is of general type.
	In particular, the second case for $i=1$ does not appear.
\end{rema}

\begin{proof}[Proof of Theorem \ref{thm:2n-1main}]
	By Proposition \ref{prop:2n-1A}, we just need to consider varieties not satisfying condition \eqref{hypo3}, \ie\ such that $Y=a_X(X)$ is fibred by positive dimensional abelian subvarieties (see Lemma \ref{lem:hypo3}).

	Consider $K$ to be the largest abelian subvariety of $A_X$ such that $Y+K=Y$ and consider
	the quotient $A_X\to A_X/K$.
	Let $X\stackrel{\kappa}\to Z\to Y/K$ be the Stein factorization of $Z\to Y/K$.
	After birational modifications and abusing notation, we have the following commutative diagram of smooth projective varieties
	\begin{equation}\label{eqn:pic2n-1}
	\xymatrix@!C{
	X\ar[r]\ar[d]_\kappa & Y\ar[d] \ar@{^{(}->}[r] & A_X\ar[d]\\
	Z\ar@/_1.5pc/[rr]_{a_Z}\ar[r]^{f_K}& Y/K\ar@{^{(}->}[r] &A_X/K=A_Z,}
	\end{equation}
	where both $Z$ and $Y/K$ are of general type.

	Note that $\dim Z=n-\dim K$ and $q(Z)=2n-1-\dim K$.
	Consider $Q\in \PA_X$ a general point such that $R^i\kappa_*(\omega_X\otimes Q)=0$ for all $i>0$ and we
	denote $\cF=\kappa_*(\omega_X\otimes Q)$.
	Since $\chi(X,\omega_X)=1$, then $\chi(Z, \cF)=1$.

	By Proposition \ref{prop:hpTech} applied to $\kappa$ and $\cF$, we have $\dim K=1$ and $Z$ is birational to a product of $n-1$ curves of genus $2$.
	Without loss of generality, we can assume that $Z$ is isomorphic to a product of $n-1$ curves of genus $2$.

	The natural morphism $t: X\to Z\times_{A_Z}A_X$ is a generically finite and surjective morphism.
	Thus, $t_*\omega_X=\omega_{Z\times_{A_Z}A_X}\oplus \cQ$ for some torsion-free coherent sheaf $\cQ$ on $Z\times_{A_Z}A_X$.
	We then have $\kappa_*\omega_X=\omega_Z\oplus \cG$, for some torsion-free coherent sheaf $\cG$.
	Note that
	\begin{align*}
	\chi(Z, \cG)&=\chi(Z, \kappa_*\omega_X)-\chi(Z, \omega_Z)\\
	&=\chi(Z, \kappa_*\omega_X)-\chi(Z, R^1\kappa_*\omega_X)\\
	&=\chi(X, \omega_X)=1.
	\end{align*}

	By Proposition \ref{prop:hpTech} applied to $\kappa$ and $\cG$, the sheaf $\cG=\omega_Z\otimes Q$ for some $Q\in \PAZ$.
	In particular, a general fiber of $\kappa$ is a curve $C_1$ of genus $2$.
	We also observe that $Q$ is torsion line bundle by Simpson's theorem (see \cite{sim}) and we denote by $d$ the order of $Q$.

	Let $\pi: \widetilde{Z}\to Z$ be the \'etale cover induced by the cyclic group generated by $Q$.
	We denote the Galois group by $G=\mathbb{Z}/d\mathbb{Z}$.
	We consider the base change of the commutative diagram \eqref{eqn:pic2n-1}:
	\begin{equation*}
	\xymatrix@!C{
	\widetilde{X}\ar[r]\ar[d]^{\widetilde{\kappa}}&X\ar[r]\ar[d]^\kappa& A_X\ar[d]\\
	\widetilde{Z}\ar[r]^{\pi}&Z\ar[r]^{a_Z}& A_X/K.}
	\end{equation*}
	Then
	\begin{align*}
	q(\widetilde{X})=h^{n-1}(\widetilde{X}, \omega_{\widetilde{X}})
	&=h^{n-2}(\widetilde{Z}, R^1\widetilde{\kappa}_*\omega_{\widetilde{X}})+h^{n-1}(\widetilde{Z}, \widetilde{\kappa}_*\omega_{\widetilde{X}})\\
	&=q(\widetilde{Z})+h^{n-1}(\widetilde{Z}, \pi^*\kappa_*\omega_{\widetilde{X}})\\
	&= q(\widetilde{Z})+\sum_{i=0}^{d-1}h^{n-1}(Z, \kappa_*\omega_X\otimes Q^i)\\
	&= q(\widetilde{Z})+\sum_{i=0}^{d-1}h^{n-1}\big(Z,(\omega_Z\oplus (\omega_Z\otimes Q))\otimes Q^i\big)\\
	&= q(\widetilde{Z})+2.
	\end{align*}
	Since a general fiber of $\widetilde{\kappa}$ is still the genus $2$ curve $C_1$,
	by Proposition \ref{prop:fibertheta}, we conclude that $\widetilde{X}$ is birational to a product $C_1\times \widetilde{Z}$.
	Thus, there exists a faithful $G$-action on $C_1$ and $X$ is birational to the quotient by the diagonal action $(C_1\times \widetilde{Z})/G$ (see, e.g., \cite[Lem.~VI.10]{beauv}).
	Since $q(X)=q(Z)+1$, we conclude that $C_1/G$ is an elliptic curve.
	Since $g(C_1)=2$ and $G$ is cyclic, we have $d=2$ and $G=\mathbb{Z}/2\mathbb{Z}$.
\end{proof}

\bigskip

\subsection*{Birational characterization of products of theta divisors}
It is clear from the previous results that we need the condition \eqref{hypo3} to characterize birationally the products of theta divisors and force $f$ to be birational.
Hence, it is natural ask if this extra condition is enough (the question due to Pareschi, see \cite[Chapter 5]{Ti}).
\begin{qu}[Pareschi]\label{qu?}
Let $X$ be a smooth projective variety.
If $X$ satisfies the $3$ conditions \Con, is $X$ birational to a product of theta divisors?
\end{qu}

If $X$ is birational to a product of irreducible theta divisors, then $X$ clearly satisfies all conditions \Con.
Hence, if the answer is positive this will provide a birational characterization of products of theta divisors.

To our knowledge, until now, the evidences for a positive answer were:
\begin{enumerate}
	\item the surface case $\dim X=2$, starting from the work of Beauville (in the appendix of \cite{appBeau}) and finishing with Hacon--Pardini \cite{HP-surf} and Pirola \cite{pirola};
	\item the maximal irregularity case, \ie\ when $q(X)=2\dim X$, by Hacon--Pardini \cite{HP-g2};
	\item the characterization of theta divisors under some extra assumptions, which started with the work of Hacon \cite[Sect.~3]{Hac-theta}.
	This initial characterization has been refined by Hacon--Pardini \cite[Cor.~4.3]{HP-mAd}, Lazarsfeld--Popa \cite[Prop.~3.8(ii)]{LP},
	Barja--Lahoz--Naranjo--Pareschi \cite[Prop.~3.1]{BLNP}, and Pareschi \cite[Thm.~5.1]{pareschi}.
\end{enumerate}

We have given some new evidences, namely
\begin{enumerate}
	\item[{\it(iv)}] the submaximal irregularity case, \ie\ when $q(X)=2\dim X-1$ (see Proposition \ref{prop:2n-1A});
	\item[{\it(v)}] a further characterization of theta divisors (see Corollary \ref{cor:divisorcase}).
\end{enumerate}

\medskip

We also push forward the characterization of theta divisors of Corollary \ref{cor:divisorcase} to the case of arbitrary codimension of $a_X(X)$ inside $A_X$.
That is, we consider the already mentioned condition \eqref{hypo4}:
\begin{equation*}
\tag{C.4} a_X(X) \text{ is smooth in codimension }1.
\end{equation*}

Then, as a direct corollary of Lemma \ref{lem:sof} and Theorem \ref{thm:sm-in-codim1-sav}, we obtain the following birational characterization of products of theta divisors.
\begin{theo}\label{thm:sm-in-codim1}
Let $X$ be a smooth projective variety.
Then $X$ is birational to a product of theta divisors if, and only if, $X$ satisfies the 4 conditions \Cond.
 \end{theo}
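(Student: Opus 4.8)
The plan is to obtain Theorem~\ref{thm:sm-in-codim1} as a formal consequence of Lemma~\ref{lem:sof} and Theorem~\ref{thm:sm-in-codim1-sav}: no genuinely new argument should be needed, only a careful matching of hypotheses. I would split the equivalence into its two implications.

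First I would dispatch the easy direction, that a product of theta divisors $\Theta_1\times\cdots\times\Theta_m$ satisfies \Cond\ (and hence so does any $X$ birational to it, since all four conditions are birational invariants of smooth projective varieties). By Ein--Lazarsfeld \cite{el}, each $\Theta_j$ is normal of general type with rational singularities, so in particular smooth in codimension $1$; the product is then of general type and smooth in codimension $1$, and its Albanese image is the product itself, giving \eqref{hypo3} and \eqref{hypo4}. Desingularizations of theta divisors are of maximal Albanese dimension, so products of them are too, giving \eqref{hypo1}. Finally, multiplicativity of the holomorphic Euler characteristic yields $\chi=\prod_j\chi(\Theta_j,\omega_{\Theta_j})=1$, which is \eqref{hypo2}.

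For the substantial direction I would assume $X$ satisfies \Cond\ and set $Y:=a_X(X)\subseteq A_X$, which generates $A_X$ by the universal property of the Albanese morphism. Condition \eqref{hypo1} makes $a_X$ generically finite onto $Y$, condition \eqref{hypo3} makes $Y$ of general type, and \eqref{hypo2} gives $\chi(X,\omega_X)=1$; thus Lemma~\ref{lem:sof} applies and shows that $a_X$ induces a birational equivalence between $X$ and $Y$. Consequently $X$ is birational to any desingularization $X'$ of $Y$, and $\chi(X',\omega_{X'})=\chi(X,\omega_X)=1$ by birational invariance. Now condition \eqref{hypo4} says $Y$ is smooth in codimension $1$, so Theorem~\ref{thm:sm-in-codim1-sav} applies to $X'\to Y$ and yields that $Y$, hence $X$, is birational to a product of theta divisors.

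I expect no real obstacle, since all the technical weight has already been absorbed into Theorem~\ref{thm:sm-in-codim1-sav}. The only points demanding attention are the bookkeeping ones: that $Y$ generates $A_X$ (automatic for an Albanese image, which is what lets Section~\ref{sec:subvA}'s standing assumption be met), that $\chi$ may be transported along the birational equivalence of Lemma~\ref{lem:sof} to a genuine desingularization of $Y$, and that \eqref{hypo3} is precisely the hypothesis forcing $a_X$ to be birational onto $Y$ rather than of higher degree. Dropping \eqref{hypo3} would reintroduce the degree-$2$ phenomena classified in Theorem~\ref{thm:2n-1main}, so its role here is essential.
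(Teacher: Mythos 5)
Your proposal is correct and is essentially identical to the paper's own argument: the paper likewise obtains Theorem \ref{thm:sm-in-codim1} as a direct corollary of Lemma \ref{lem:sof} (using \Con\ to make $a_X$ birational onto its image) together with Theorem \ref{thm:sm-in-codim1-sav} applied to the Albanese image, which is smooth in codimension $1$ by \eqref{hypo4}, and handles the converse via the Ein--Lazarsfeld normality of theta divisors just as you do.
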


We have already mentioned that Ein--Lazarsfeld prove in \cite{el} that an irreducible theta divisor is normal and has only rational singularities.
Hence, if $X$ is birational to a product of irreducible theta divisors, then $a_X(X)$ is a product of irreducible theta divisors, so it is normal and, in particular, smooth in codimension $1$.

\medskip

It is also possible to give an easier proof of the previous theorem by using induction on the dimension of $X$, instead of induction on the number of components of $\Vz^1(\omega_X,a_X)$.
Indeed, since in this case we work directly with the Albanese variety of $X$, we can avoid using the full machinery of Theorem \ref{thm:sm-in-codim1-sav} and we can directly use Propositions \ref{prop:fibertheta}, \ref{prop:fiber=1}, and Corollary \ref{cor:divisorcase}.

\end{document}